\newtheorem{theorem}{Theorem}[section]
\newtheorem{corollary}[theorem]{Corollary}
\newtheorem{lemma}[theorem]{Lemma}
\newtheorem{proposition}[theorem]{Proposition}
\newtheorem*{theorem*}{Theorem}
\newtheorem{letterthm}{Theorem}
\newtheorem{lettercor}[letterthm]{Corollary}
\theoremstyle{definition}
\newtheorem{definition}[theorem]{Definition}
\newtheorem*{definition*}{Definition}
\theoremstyle{remark}
\newtheorem{remark}[theorem]{Remark}
\newtheorem*{claim*}{Claim}
\newcommand{\cG}{\mathcal{G}}
\newcommand{\cH}{\mathcal{H}}
\newcommand{\cR}{\mathcal{R}}
\newcommand{\N}{\mathbb{N}}
\newcommand{\Z}{\mathbb{Z}}
\newcommand{\actson}{\curvearrowright}
\newcommand{\inv}{^{-1}}
\newcommand{\zero}{^{(0)}}
\DeclareMathOperator{\Fix}{Fix}
\DeclareMathOperator{\Iso}{Iso}
\def\thanks#1{\protected@xdef\@thanks{\@thanks
        \protect\footnotetext{#1}}}
\newlength\mylen
\newlist{case}{enumerate}{1}
\setlist[case,1]{label=\textbf{Case~\arabic*.}, 
  labelwidth=\dimexpr-\mylen-\labelsep\relax,leftmargin=0pt,align=right}
\title{The Choquet--Deny Property for Groupoids}
\author{
Tey Berendschot  \and 
Soham Chakraborty\and 
Milan Donvil\and 
Se-Jin Kim\and 
Mario Klisse \thanks{
\hspace{-2 em} \faMapMarker\hspace{0.18 em}: KU Leuven, Department of Mathematics, Leuven, Belgium \\ 
\Letter : \texttt{tey.berendschot@gmail.com, \{soham.chakraborty, milan.donvil, \\ 
\hphantom \hspace{2 em} sam.kim,~mario.klisse\}@kuleuven.be}
}
} 
\date{\today}
\begin{document}

\maketitle 

\begin{abstract}\noindent
A countable discrete group is called Choquet--Deny if for any non-degenerate probability measure on the group, the corresponding space of bounded harmonic functions is trivial. Building on the previous work of Jaworski, a complete characterization of Choquet--Deny groups was recently achieved by Frisch, Hartman, Tamuz, and Ferdowski. In this article, we extend the study of the Choquet--Deny property to the framework of discrete measured groupoids. Our primary result offers a complete characterization of this property in terms of the isotropy groups and the equivalence relation associated with the given groupoid. Additionally, we use the implications derived from our main theorem to classify the Choquet--Deny property of transformation groupoids. 
\end{abstract}

\section*{Introduction}

Motivated by analogous concepts in complex analysis, a pair $(G,\mu)$ consisting of a countable discrete group $G$ and a probability measure $\mu$ on $G$ is called \emph{Liouville} if the space of complex-valued bounded harmonic functions $f$ on $G$ that are $\mu$-harmonic, i.e., satisfying $f(g)=\sum_{h\in G}\mu(h)f(gh)$ for every $g\in G$, contains only constant functions. This property is equivalently described by the triviality of the \emph{Poisson boundary} of the pair $(G,\mu)$. Poisson boundaries were introduced by Furstenberg in a series of papers \cite{Furstenberg63a, Furstenberg63b, Furstenberg71, Furstenberg73} as a tool for studying random walks on countable discrete groups. Since then, the concept has been widely generalized and viewed from various perspectives. A significant breakthrough in the study of Poisson boundaries (and consequently the Liouville property) is the Kaimanovich--Vershik theorem in \cite{VershikKaimanovich79, KaimanovichVershik83}, which provides an affirmative answer to Furstenberg's conjecture in \cite{Furstenberg73}. Kaimanovich and Vershik demonstrated that every countable discrete amenable group admits a probability measure with a trivial associated Poisson boundary. This result was later extended to more general topological groups \cite{Rosenblatt81, SchneiderThom20}. Conversely, it is known that the Poisson boundaries of non-amenable groups are always non-trivial \cite{Furstenberg73}.

Note that the probability measure $\mu$ of a Liouville pair $(G,\mu)$ must be \emph{non-degenerate}, meaning its support generates the entire group as a semigroup. It is natural to ask for countable discrete groups for which all non-degenerate probability measures admit only constant bounded harmonic functions. Such groups are said to possess the \emph{Choquet--Deny} property. The first examples of Choquet--Deny groups were given by Blackwell in \cite{Blackwell} and by Choquet and Deny in \cite{ChoquetDeny}, who demonstrated that Abelian groups have the Choquet--Deny property. This result was later extended to virtually nilpotent groups in \cite{DynkinMaljutov61} and more generally to \emph{FC-hypercentral} groups in \cite{Jaworski}. Recall that a group is called FC-hypercentral if each of its quotients contains a non-trivial finite conjugacy class. Despite the discovery of many examples of amenable groups that do not possess the Choquet--Deny property \cite{Kaimanovich83b, KaimanovichVershik83, Erschler04a, Erschler04b}, a complete characterization of this property remained an open question for a long time. Only recently, Frisch, Hartman, Tamuz, and Ferdowski provided a significant breakthrough by proving that every Choquet--Deny group is FC-hypercentral \cite[Theorem 1]{Frisch2019}. The analogous concept for tracial von Neumann algebras with separable preduals was first investigated by Das and Peterson in \cite{DasPeterson22}. Later, Zhou provided a complete characterization of Choquet--Deny von Neumann algebras in terms of their types in \cite{Zhou24}.

In this article, we initiate the study of the Choquet--Deny property in the context of measured groupoids. A \emph{groupoid} is a small category where all morphisms are invertible, thus generalizing the notion of groups. The concept is motivated by its profound connections to various fields such as dynamics and ergodic theory \cite{Xin18}, operator algebras \cite{Renault, RenaultCartan, Anantharaman}, non-commutative geometry \cite{Connes94}, and homotopy type theory \cite{hottbook}. In particular the connection between measured groupoids and von Neumann algebras have been studied extensively, for example in \cite{Mackey49, Feldman-Moore-1, Feldman-Moore-2, Hahn78, SutherlandTakesaki85, Yamanouchi94}. In \cite{Kaimanovich05}, Kaimanovich introduced the \emph{fiberwise Liouville} property for measured groupoids. Drawing an analogy to invariant Markov operators on groups corresponding to random walks, he considered \emph{invariant Markov operators} on groupoids that act fiberwise with respect to a Haar system. A measured groupoid equipped with such an invariant Markov operator is called fiberwise Liouville if almost all of its fiberwise actions admit no non-trivial bounded harmonic functions (for precise definitions see Subsection \ref{MeasuredGroupoids} and Section \ref{MainSection}). Similar to the group case, this notion is closely related to amenability: results in \cite{Kaimanovich05, ChuXin2018} (see also \cite{Buehler}) imply that a measured groupoid is \emph{amenable} in the sense of \cite{Renault, Anantharaman} if and only if it admits an invariant Liouville Markov operator.

In the context of measured groupoids, the analogue of countable discrete groups are \emph{discrete measured groupoids}. Building on the previous discussion, we introduce the following definition.

\begin{definition*} 
A ($\sigma$-finite) discrete measured groupoid is called \emph{Choquet--Deny} if it is fiberwise Liouville for every non-degenerate invariant Markov operator. 
\end{definition*}

In this article, we assume all measures to be $\sigma$-finite. Under this assumption, we provide a complete characterization of the Choquet--Deny property in terms of the groupoid's isotropy groups and the associated equivalence relation.

\begin{letterthm}\label{Main Theorem CD} 
A discrete measured groupoid $(\mathcal{G},\mu)$ is Choquet--Deny if and only if the following two conditions hold: 
\begin{enumerate}[label={(\arabic*)}] 
\item The countable Borel equivalence relation associated with $(\mathcal{G},\mu)$ has finite orbits $\mu$-almost everywhere. 
\item Almost all isotropy groups of $\mathcal{G}$ are Choquet--Deny. 
\end{enumerate}
\end{letterthm}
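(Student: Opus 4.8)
The plan is to prove the two implications separately, in each case reducing the groupoid statement to the group statement via the structure theory of discrete measured groupoids. Recall that a discrete measured groupoid $(\cG,\mu)$ sits in a short exact sequence relating its isotropy bundle (the field of isotropy groups $\cG_x^x$ over the unit space) to the associated countable Borel equivalence relation $\cR$ on the unit space $X$. An invariant Markov operator $P$ on $\cG$ decomposes fiberwise as a field of Markov operators $(P_x)_{x \in X}$ on the fibers $\cG^x$, and non-degeneracy is an $\mu$-a.e. fiberwise condition. The key technical input will be a disintegration result: the fiberwise Poisson boundary of $(\cG, P)$ can be analyzed by first passing to the isotropy groups and then assembling over the orbits of $\cR$.

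For the ``if'' direction, assume (1) and (2). First I would use (1) to reduce to the case where $\cR$ itself has finite orbits; by ergodic decomposition and a standard exhaustion argument it suffices to treat a fixed orbit of size $n$, on which $\cG$ looks like the groupoid of an $n \times n$ matrix amplification of a single isotropy group $H$ (this is essentially the groupoid version of the fact that a transitive groupoid with isotropy $H$ is isomorphic to the pair groupoid of $\{1,\dots,n\}$ tensored with $H$). Then I would show that an invariant Markov operator on such a groupoid, when restricted appropriately, induces a non-degenerate probability measure on $H$ whose harmonic functions control the fiberwise harmonic functions on $\cG$; here the point is that a fiberwise bounded harmonic function on the amplified groupoid, being harmonic along each fiber, is rigidly determined by finitely much data and collapses to an $H$-harmonic function. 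Invoking hypothesis (2), the $H$-harmonic functions are constant, hence so are the fiberwise harmonic functions, giving the fiberwise Liouville property for every non-degenerate $P$.

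For the ``only if'' direction, assume $(\cG,\mu)$ is Choquet--Deny. If (1) fails, then on a non-null set the $\cR$-orbit is infinite; on such a set one can build, fiberwise, an invariant Markov operator whose fiberwise random walk is transient (for instance a simple random walk on an infinite orbit, or more carefully a non-degenerate walk on the full fiber that projects to a transient walk on the infinite orbit), producing non-constant bounded harmonic functions on a non-null set of fibers and contradicting fiberwise Liouville. If (2) fails, then on a non-null set the isotropy group $H = \cG_x^x$ is not Choquet--Deny, so by definition there is a non-degenerate $\nu$ on $H$ with non-trivial Poisson boundary; the measurable-selection/rigidity arguments (of the type used in the measured groupoid literature, e.g. Feldman--Moore style) let me glue these into a measurable field giving a non-degenerate invariant Markov operator on $\cG$ that fails to be fiberwise Liouville.

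The main obstacle, which I expect to absorb most of the work, is the measurability and disintegration bookkeeping in both directions: making sense of ``the fiberwise Poisson boundary disintegrates over the isotropy groups and the $\cR$-orbits'' as a genuine measurable statement, and in the ``only if'' direction performing the measurable selection of a bad measure $\nu_x$ on each non-Choquet--Deny isotropy group $\cG_x^x$ in a way that assembles to an honest invariant Markov operator on $\cG$ with the $\sigma$-finiteness hypotheses respected. The group-theoretic core of each step is then supplied by the Frisch--Hartman--Tamuz--Ferdowski characterization and the classical Choquet--Deny theory quoted in the introduction.
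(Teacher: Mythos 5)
Your outline has the right architecture (separate the two directions, reduce to the isotropy groups and the equivalence relation), but both directions contain a gap, and one of them is a genuine mathematical error. In the ``only if'' direction, your treatment of the case where (1) fails rests on the claim that a fiberwise walk that is \emph{transient} on an infinite orbit produces non-constant bounded harmonic functions. This is false: transience does not imply a non-trivial Poisson boundary (simple random walk on $\mathbb{Z}^3$ is transient yet Liouville, and every non-degenerate walk on an FC-hypercentral group is Liouville). An infinite $\mathcal{R}$-orbit carries no intrinsic obstruction to the Liouville property, so no direct construction of a ``bad'' walk on the orbit can work. The paper instead first discards non-amenable $\mathcal{R}$ (non-amenable groupoids are never Liouville, by Kaimanovich), so that $\mathcal{R}$ is hyperfinite; it then uses stability of non-type-I hyperfinite ergodic equivalence relations together with a realization of every ergodic hyperfinite equivalence relation with infinite orbits as the orbit relation of a free $\mathcal{S}_\infty$-action, to exhibit $\mathcal{R}$ (up to the type $\mathrm{I}_\infty$ part, handled separately) as the orbit relation of a free action of $\mathcal{S}_\infty\times\mathbb{Z}$. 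Since that group has an icc quotient it is not Choquet--Deny by Frisch--Hartman--Tamuz--Ferdowski, and a non-Liouville measure on it transports to a non-fiberwise-Liouville invariant Markov operator on the transformation groupoid. Some detour through a non-Choquet--Deny group acting freely with orbit relation $\mathcal{R}$ seems unavoidable; your transience argument cannot be repaired locally. Similarly, in the case where (2) fails, ``measurable selection of a bad measure $\nu_x$'' is not a routine selection argument: the paper must run the explicit switching-element construction of Frisch--Hartman--Tamuz--Ferdowski in Borel families, after a transfinite FC-hypercentral reduction to an icc field of groups, precisely because there is no off-the-shelf theorem producing a Borel field of non-degenerate non-Liouville measures.

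In the ``if'' direction, the sentence asserting that a fiberwise harmonic function ``collapses to an $H$-harmonic function'' for an induced non-degenerate measure on $H=\mathcal{G}_x^x$ is where all the content of this implication lives, and it is not supplied. The fiber $\mathcal{G}^x$ is infinite even when the orbit is finite, and the Markov operator mixes the orbit direction with the isotropy direction, so one cannot simply ``restrict'' it. The correct induced measure is the hitting measure of the walk stopped at the return time $T$ to $\mathcal{G}_x^x$, and one must prove three things: that $T<\infty$ almost surely (this is exactly where finiteness of the orbit is used quantitatively, via a geometric bound on the tail of $T$); that bounded $P_x$-harmonic functions restrict to hitting-measure-harmonic functions on $\mathcal{G}_x^x$, which is Doob's optional stopping theorem applied to the martingale $f\circ X_i$; and that the hitting measure is non-degenerate. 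A second application of optional stopping is then needed to propagate constancy from $\mathcal{G}_x^x$ back to all of $\mathcal{G}^x$. Without these steps the reduction to hypothesis (2) is an assertion, not a proof.
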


Our proof of Theorem \ref{Main Theorem CD} is divided into several steps and extensively utilizes the construction presented in \cite{Frisch2019}. The theorem notably implies that a discrete measured groupoid is Choquet--Deny if and only if it admits no icc quotients apart from finite equivalence relations, see Proposition \ref{Proposition:CD and icc quotients}. The concept of icc discrete measured groupoids was introduced by the first four authors in \cite{berendschot2024factoriality}.

Measured groupoids admit an analogue of the Poisson boundary, studied by Kaimanovich in \cite{Kaimanovich05}. Similar to groups, the triviality of a measured groupoid's Poisson boundary can be understood in terms of the fiberwise Liouville property. Theorem~\ref{Main Theorem CD} illustrates that discrete measured groupoids with the Choquet--Deny property are relatively rare, as demonstrated by the following result:
\begin{lettercor}\label{Main Corollary}
Let $\Gamma$ be a countable group acting on a Borel probability space $(X,\mu)$. If $\Gamma$ is not Choquet--Deny or if $\Gamma \actson (X,\mu)$ does not have finite orbits $\mu$-almost everywhere then the transformation groupoid $(\Gamma\ltimes X,\mu)$ admits a non-trivial Poisson boundary. 
\end{lettercor}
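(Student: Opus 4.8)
The plan is to derive this from Theorem~\ref{Main Theorem CD} and the Frisch--Hartman--Tamuz--Ferdowski theorem identifying the Choquet--Deny groups with the FC-hypercentral ones \cite[Theorem~1]{Frisch2019}. Since the Poisson boundary of $(\Gamma\ltimes X,\mu)$ is trivial exactly when $(\Gamma\ltimes X,\mu)$ is fiberwise Liouville, the corollary is the contrapositive of the statement: if $(\Gamma\ltimes X,\mu)$ is Choquet--Deny, then $\Gamma$ is Choquet--Deny and $\Gamma\actson(X,\mu)$ has finite orbits $\mu$-almost everywhere. So I would assume $(\Gamma\ltimes X,\mu)$ is Choquet--Deny. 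The countable Borel equivalence relation associated with $\Gamma\ltimes X$ is the orbit equivalence relation of $\Gamma\actson(X,\mu)$, and the isotropy group over $x\in X$ is the stabilizer $\Stab_\Gamma(x)$. Theorem~\ref{Main Theorem CD} therefore yields at once (1) that $\Gamma\actson(X,\mu)$ has finite orbits $\mu$-a.e.\ (which is one of the two desired conclusions) and (2) that $\Stab_\Gamma(x)$ is Choquet--Deny for $\mu$-a.e.\ $x$. It remains to promote (1) and (2) together to the assertion that $\Gamma$ itself is Choquet--Deny.

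I would establish this by contradiction. Suppose $\Gamma$ is not Choquet--Deny; then by \cite[Theorem~1]{Frisch2019} it is not FC-hypercentral, so it admits a non-trivial quotient $q\colon\Gamma\twoheadrightarrow Q$ with trivial FC-center (an \emph{icc} quotient), and such a $Q$ is necessarily infinite. Write $N=\ker q$. By (1), for $\mu$-a.e.\ $x$ the stabilizer $H_x:=\Stab_\Gamma(x)$ has finite index $[\Gamma:H_x]=|\Gamma x|<\infty$. If $H_x\subseteq N$ then $[\Gamma:H_x]\ge[\Gamma:N]=|Q|=\infty$, a contradiction, so $q(H_x)\neq\{1\}$ for $\mu$-a.e.\ $x$. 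Moreover $q(H_x)$ is a finite-index subgroup of $Q$ since $[Q:q(H_x)]\le[\Gamma:H_x]$, and a finite-index subgroup $K$ of an icc group $Q$ is again icc: if $h\in K$ has finitely many $K$-conjugates, then $C_K(h)$ has finite index in $K$, hence in $Q$, so $C_Q(h)$ has finite index in $Q$ and thus $h\in\mathrm{FC}(Q)=\{1\}$. Consequently, for $\mu$-a.e.\ $x$ the group $q(H_x)$ is a non-trivial icc group, hence not FC-hypercentral; being a quotient of $H_x$, it witnesses that $H_x$ is not FC-hypercentral, and therefore not Choquet--Deny by \cite[Theorem~1]{Frisch2019}. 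This contradicts (2), so $\Gamma$ must be Choquet--Deny and the argument is complete.

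The virtue of this proof is that it is essentially a bookkeeping exercise on top of Theorem~\ref{Main Theorem CD}; the one genuinely substantive step --- and the point I expect to demand the most care --- is the passage from condition (2), which concerns the isotropy groups $\Stab_\Gamma(x)$, to the conclusion about $\Gamma$. Condition (1) is precisely the bridge here: finite orbits force a conull set of stabilizers to lie in $\Gamma$ with finite index, and it is this finiteness that makes the two group-theoretic operations used above --- restricting an icc quotient of $\Gamma$ to a finite-index subgroup, and viewing it as a quotient of a stabilizer --- compatible with the icc/FC-hypercentral dichotomy supplied by \cite{Frisch2019}. Two routine points should still be recorded in the write-up: that the set $\{x:\Stab_\Gamma(x)\text{ is not Choquet--Deny}\}$ (and the similar sets used above) are measurable, which is already implicit in the formulation of Theorem~\ref{Main Theorem CD}; and that ``$(\Gamma\ltimes X,\mu)$ admits a non-trivial Poisson boundary'' means that some non-degenerate invariant Markov operator fails to be fiberwise Liouville, so that the conclusion is genuinely the negation of the Choquet--Deny property for $(\Gamma\ltimes X,\mu)$.
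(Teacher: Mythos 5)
Your proof is correct, but it takes a genuinely different route from the paper for the key step. The paper derives the corollary from Theorem~\ref{theorem: transformation groupoids}, whose ``only if'' direction rests on Proposition~\ref{Prop: transformation groupoid CD =00003D> group CD}: given a non-degenerate $\nu\in\mathrm{Prob}(\Gamma)$, one builds the invariant Markov operator $\pi_x(g,g^{-1}\cdot x)=\nu(g)$ on $\Gamma\ltimes X$ and pulls back a bounded $\nu$-harmonic function $F$ on $\Gamma$ to the $P_x$-harmonic function $H(g,g^{-1}\cdot x)=F(g)$ on a fiber; fiberwise Liouvilleness then forces $F$ to be constant. That argument is elementary, needs no finiteness of orbits, and does not invoke the Frisch--Hartman--Tamuz--Ferdowski theorem. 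You instead extract both conclusions of Theorem~\ref{Main Theorem CD} (finite orbits a.e.\ and Choquet--Deny stabilizers a.e.) and then transfer the Choquet--Deny property from the stabilizers $\Stab_\Gamma(x)$ up to $\Gamma$ via the FC-hypercentral characterization: a non-trivial icc quotient $q\colon\Gamma\twoheadrightarrow Q$ would map each finite-index stabilizer onto a finite-index, hence non-trivial icc, subgroup of $Q$, contradicting FC-hypercentrality of the stabilizers. All the group-theoretic steps here (finite-index subgroups of icc groups are icc; a non-trivial icc quotient witnesses failure of FC-hypercentrality) are sound, and the finite-orbit condition is used exactly where it must be. The trade-off is that the paper's route proves the stronger statement that Choquet--Deny of $\Gamma\ltimes X$ implies Choquet--Deny of $\Gamma$ for an arbitrary non-singular action, with no orbit hypothesis, while your route leans on the full strength of \cite[Theorem~1]{Frisch2019} and genuinely needs the finite orbits to bridge from stabilizers to $\Gamma$. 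Both are valid proofs of the corollary as stated.
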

This is an indication that the corresponding Poisson boundaries present in many cases interesting objects of study.

\subsection*{Structure} 
The article is structured as follows. In Section \ref{preliminaries} we remind the reader of basic notions surrounding the theory of measured groupoids, Borel equivalence relations, semi-direct product groupoids, Markov chains, and martingales. In Section \ref{MainSection} we introduce and explore the Choquet--Deny property for groupoids: first we characterize this property within the realm of countable Borel equivalence relations and demonstrate that a non-trivial icc measured field of groups does not exhibit the Choquet--Deny property. In combination with results on the preservation of the Choquet--Deny property under suitable quotients, this allows us to obtain the ``only if'' direction of Theorem \ref{Main Theorem CD} in Subsection \ref{ChoquetDenyUnderQuotients}. The converse direction is then attained in Subsection \ref{GroupoidMarkovChains} by employing Doob's optional stopping theorem on Markov chains derived from discrete measured groupoids. In the final Subsection \ref{examples} we collect implications of Theorem \ref{Main Theorem CD} by considering additional examples and the proof of Theorem~\ref{theorem: transformation groupoids}, which immediately implies Corollary~\ref{Main Corollary}.

\subsection*{Acknowledgements}
The authors are supported by the Research Foundation Flanders (FWO): T.~Berendschot is supported by PhD grant 1101324N, S.~Chakraborty is supported by research project G090420N, M.~Donvil is supported by PhD grant 1162024N, S.~Kim is supported by research project G085020N, and M.~Klisse is supported by postdoctoral grant 1203924N. Additionally, S.~Kim is supported by the internal KU Leuven funds project number C14/19/088.

\section{Preliminaries} \label{preliminaries}

\subsection{Measured groupoids} \label{MeasuredGroupoids}

Given a groupoid $\mathcal{G}$ we denote its \emph{unit space} by $\mathcal{G}^{(0)}$ and write $s$ and $t$ for the \emph{source} and \emph{target maps} respectively. The \emph{source} and \emph{target fibers} of an element $x\in\mathcal{G}^{(0)}$ are given by $\mathcal{G}_{x}:=s^{-1}(\{x\})$ and $\mathcal{G}^{x}:=r^{-1}(\{x\})$. The \emph{isotropy subgroupoid} of $\mathcal{G}$ is defined as $\text{Iso}(\cG):=\{g \in \cG \mid s(g) = t(g)\}$ and $\mathcal{G}^{(2)}:=\{(g,h)\in\mathcal{G}\times\mathcal{G} \mid  s(g)=t(h)\}$ denotes the subset of \emph{composable pairs}.

A \emph{discrete Borel groupoid} is a groupoid $\mathcal{G}$ that is also a standard Borel space for which $\mathcal{G}^{(0)}\subseteq\mathcal{G}$ is a Borel subset, for which the structure maps (i.e., the source and target maps, the multiplication and the inverse) are Borel measurable functions, and for which $s$ and $t$ are \emph{countable-to-one} in the sense that all source and target fibers are at most countably infinite. Given such a groupoid and a Borel measure $\mu$ on $\mathcal{G}^{(0)}$, we introduce two measures $\mu_{s}$ and $\mu_{t}$ on $\mathcal{G}$ via
\[
\mu_{s}(A):=\int_{\mathcal{G}^{(0)}}\#(s^{-1}(x)\cap A)d\mu(x)
\]
and
\[
\mu_{t}(A):=\int_{\mathcal{G}^{(0)}}\#(t^{-1}(x)\cap A)d\mu(x)
\]
for a Borel subset $A\subseteq\mathcal{G}$. The pair $(\mathcal{G},\mu)$ is a called a \emph{discrete measured groupoid} if $(\mathcal{G}^{(0)},\mu)$ is a standard probability space and if the measures $\mu_{s}$ and $\mu_{t}$ are \emph{equivalent} (i.e., if their null sets coincide). In this article we will always assume the groupoid $\mathcal{G}$ to be discrete and the measure $\mu$ to be $\sigma$-finite. Since every $\sigma$-finite measure is equivalent to a probability measure, we may furthermore restrict to probability measures.

Let $(\pi_{g})_{g\in\mathcal{G}}\subseteq\text{Prob}(\mathcal{G})$ be a family of Borel probability measures on $\mathcal{G}$. We say that the family is \emph{Borel} if for any non-negative Borel function $f:\mathcal{G}\rightarrow\mathbb{R}$ the map $g\mapsto\int_{\mathcal{G}}f(h)\pi_{g}(h)$ is again Borel. Associated to a Borel family of probability measures $(\pi_{g})_{g\in\mathcal{G}}$ is its \emph{Markov operator} $P$ on $L^{\infty}(\mathcal{G},\mu_{s})$, defined by $Pf(g)=\int_{\mathcal{G}}f(h)\pi_{g}(h)$ for $f\in L^{\infty}(\mathcal{G},\mu_{s})$, $g\in\mathcal{G}$. We say that the family $(\pi_{g})_{g\in\mathcal{G}}$ (or equivalently, the Markov operator $P$) is \emph{invariant} if $g\cdot\pi_{h}=\pi_{gh}$ for any pair $(g,h)\in\mathcal{G}^{(2)}$, where $(g\cdot\pi_{h})(A):=\pi(g^{-1}A)$. Note that in this case the measure $\pi_{g}$ must be concentrated on $\mathcal{G}^{t(g)}$. Moreover, the family $(\pi_{g})_{g\in\mathcal{G}}$ is uniquely determined by the $\pi_{x}$, $x\in\mathcal{G}^{(0)}$ via $\pi_{g}=g\cdot\pi_{s(g)}$ for $g\in\mathcal{G}$ \cite[Proposition 3.4]{Kaimanovich05}. We may hence view an invariant Markov operator $P$ as a collection of Markov operators $(P_{x})_{x\in\mathcal{G}^{(0)}}$ associated to probability measures $(d_{x}P)_{x\in\mathcal{G}^{(0)}}$ with $d_{x}P\in\text{Prob}(\mathcal{G}^{x})$ for $x\in\mathcal{G}^{(0)}$ and where $P_{x}\in\mathcal{B}(\ell^{\infty}(\mathcal{G}^{x}))$ is defined by 
\[
P_{x}f(g):=\sum_{h\in\mathcal{G}^{s(g)}}\left(d_{s(g)}P(h)\right)f(gh)
\]
for $f\in\ell^{\infty}(\mathcal{G}^{x})$, $g\in\mathcal{G}^{x}$. We recursively obtain Markov operators $P^{n}$ for every $n\in\mathbb{N}$ by setting $P^{n}f:=P(P^{n-1}f)$. The associated probability measures are given by 
\[
d_{x}P^{n}(g)=\sum_{h\in\mathcal{G}^{x}}\left(d_{x}P(h)\right)\left(d_{s(h)}P^{n-1}(h^{-1}g)\right)
\]
for $x\in\mathcal{G}^{(0)}$, $g\in\mathcal{G}^{x}$.

Let  $(\mathcal{G},\mu)$ be a discrete measured groupoid equipped with an invariant Markov operator $P$ and let $x\in\mathcal{G}^{(0)}$. The subspace $H^{\infty}(\mathcal{G}^{x},P_{x})\subseteq\ell^{\infty}(\mathcal{G}^{x})$ of \emph{$P_{x}$-harmonic functions} is given by 
\[
H^{\infty}(\mathcal{G}^{x},P_{x}):=\{f\in\ell^{\infty}(\mathcal{G}^{x}) \mid  P_{x}f=f\} \,.
\]
We say that $P_{x}$ is \emph{Liouville}, if the space $H^{\infty}(\mathcal{G}^{x},P_{x})$ is trivial in the sense that it only consists of constant functions.

Following \cite{Kaimanovich05}, we introduce the Liouville property for groupoids.
\begin{definition}[{\cite[Definition 4.1]{Kaimanovich05}}] 
An invariant Markov operator $P$ on a discrete measured groupoid $(\mathcal{G},\mu)$ is called \emph{fiberwise Liouville} if for $\mu$-almost every $x\in\mathcal{G}^{(0)}$ the operator $P_{x}\in\mathcal{B}(\ell^{\infty}(\mathcal{G}^{x}))$ is Liouville.\\
The groupoid is called \emph{Liouville}, if it admits a fiberwise Liouville invariant Markov operator. \end{definition}

For countable groups (and more generally Hausdorff second-countable topological groups \cite{SchneiderThom20}) it is well-known that amenability is equivalent to the existence of a Liouville Borel probability measure on the group, see \cite{KaimanovichVershik83}. Similarly, Kaimanovich's results in \cite{Kaimanovich05} (see also \cite{Buehler}) imply in combination with the ones by Chu and Li in \cite{ChuXin2018} that a (discrete) measured groupoid is amenable in the sense of \cite{Renault, Anantharaman} if and only if it admits an invariant Liouville Markov operator.

\subsection{Countable Borel equivalence relations}

\label{Sec: countable Borel equivalence relations}

In this section we state preliminaries on Borel equivalence relations. For detailed proofs and a rigorous treatment, we refer the reader to \cite{Feldman-Moore-1, Feldman-Moore-2}. 

A discrete measured groupoid $(\mathcal{G},\mu)$ with unit space $X:=\mathcal{G}^{(0)}$ and the property that $\Iso(\cG)$ is trivial is called a \textit{non-singular countable Borel equivalence relation}. We usually denote such an equivalence relation on the standard Borel space $(X,\mu)$ by $\mathcal{R}$. For a Borel subset $E\subseteq X$, we write $\mathcal{R}(E):=\{y\in X \mid (x,y)\in\mathcal{R}\text{ for some }x\in E\}$ for its \emph{$\mathcal{R}$-saturation}. An equivalence relation $\mathcal{R}$ on $(X,\mu)$ is called \textit{ergodic} if every Borel subset $E\subseteq X$ satisfying $\mathcal{R}(E)=E$ is either a $\mu$-null set or a $\mu$-co-null set.

An equivalence relation $\mathcal{R}$ is \textit{finite} if its orbits are finite. It can be checked that any finite ergodic equivalence relation is isomorphic to the full equivalence relation on a finite set. Note that the \emph{restriction} $\mathcal{R}|_{X_{1}}$ of $\mathcal{R}$ to a Borel subset $X_{1}\subseteq X$ is again an equivalence relation on $(X_{1},\mu|_{X_{1}})$. Such an equivalence relation will be called a \emph{summand} of $\mathcal{R}$. A countable Borel equivalence relation $\mathcal{R}$ on $(X,\mu)$ is said to be of \textit{type I$_{n}$} for $n\in\mathbb{N}\cup\{\infty\}$ if it is isomorphic to an equivalence relation $\mathcal{R}^{\prime}$ on $S\times Y$ where $S$ is a set of $n$ elements equipped with the full $\sigma$-algebra and the counting measure, $Y$ is a standard Borel space and where $(s,y)\sim_{\mathcal{R}^{\prime}}(s^{\prime},y^{\prime})$ if and only if $y=y^{\prime}$. The equivalence relation $\mathcal{R}$ is said to be of \textit{type I} or \textit{discrete} if $X$ admits a Borel partition $X=\bigsqcup_{n\in\mathbb{N}\cup\{\infty\}}X_{n}$ such that each $\mathcal{R}|_{X_{n}}$ is of type I$_{n}$. 

Recall that an equivalence relation $\cR$ on $(X,\mu)$ comes equipped with the measures $\mu_{s}$ and $\mu_{t}$. We say that $\cR$ is \textit{invariant} with respect to $\mu$ if $\mu_{s} = \mu_{t}$, and that it is \textit{quasi-invariant} with respect to $\mu$ if $\mu_{s} \sim \mu_{t}$. If $\cR$ on $(X,\mu)$ is invariant with respect to an equivalent probability measure $\nu\sim\mu$, it is called \emph{probability measure preserving} (\emph{p.m.p.}). For a non-type I ergodic equivalence relation $\mathcal{R}$, we have the following classification: it is said to be of \emph{type II$_{1}$} if it is p.m.p., it is said to be of \emph{type II$_{\infty}$} if it is not of type II$_{1}$ and invariant with respect to an equivalent infinite measure, and it is said to be of \emph{type III} if $\mathcal{R}$ is not invariant with respect to any $\mu$-equivalent measure.

By \cite[Proposition 3.1]{Feldman-Moore-1}, for any countable Borel equivalence relation $\mathcal{R}$ on $(X,\mu)$, the measure space $X$ admits a Borel partition
\[
X=\left(\bigsqcup_{n\in\mathbb{N}}X_{n}\right)\sqcup X_{\infty}\sqcup X^{C}
\]
into $\mathcal{R}$-invariant Borel subsets, where $\mathcal{R}|_{X_{n}}$ is of type $\text{I}_{n}$, $\mathcal{R}|_{X_{\infty}}$ is of type $\text{I}_{\infty}$, and where $\mathcal{R}|_{X^{C}}$ is not of type I. We will denote the Borel subset $\bigsqcup_{n\in\mathbb{N}}X_{n}$ by $X_{\text{fin}}$ and call $\mathcal{R}|_{X_{\text{fin}}}$ the \textit{finite part} of $\mathcal{R}$.

Recall that an equivalence relation $\mathcal{R}$ on a standard probability space $(X,\mu)$ is called \textit{smooth} if it admits a \textit{Borel transversal}, i.e., there exists a Borel subset $E\subseteq X$ which intersects every $\mathcal{R}$-orbit exactly once. The following result is well known and for a  quick proof of the various implications we refer the reader to \cite[Chapter 2]{CalderoniLectureNotes09}.

\begin{theorem}\label{Theorem: type I eq rel} Let $\mathcal{R}$ be a countable Borel equivalence relation on a standard probability space $(X,\mu)$. The following statements are equivalent: 
\begin{enumerate}[label=(\roman*)]
\item $\mathcal{R}$ is of type I.
\item $\mathcal{R}$ is smooth. 
\item $\mathcal{R}$ admits a \emph{Borel selector}, i.e., a Borel function $f:X\rightarrow X$ such that $(f(x),x)\in\mathcal{R}$ for all $x\in X$ and $f(x)=f(y)$ if $(x,y)\in\mathcal{R}$.
\item The quotient $X/\mathcal{R}$ is a standard Borel space. 
\end{enumerate}
\end{theorem}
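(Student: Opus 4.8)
The plan is to prove the cyclic chain of implications (i) $\Rightarrow$ (ii) $\Rightarrow$ (iii) $\Rightarrow$ (iv) $\Rightarrow$ (i); the measure $\mu$ plays no role here, the statement being purely Borel. Throughout I would invoke two classical facts: the Feldman--Moore theorem, which lets us write $\mathcal{R}=\bigcup_{n\in\mathbb{N}}\mathrm{graph}(\gamma_{n})$ for countably many Borel automorphisms $\gamma_{n}$ of $X$, and the Lusin--Novikov uniformization theorem together with the Lusin--Souslin theorem (an injective Borel image of a standard Borel space is Borel).

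For (i) $\Rightarrow$ (ii) I would use the type $\mathrm{I}_{n}$ decomposition $X=\bigsqcup_{n\in\mathbb{N}\cup\{\infty\}}X_{n}$ from the definition: on each piece fix a Borel isomorphism $X_{n}\cong S_{n}\times Y_{n}$ carrying $\mathcal{R}|_{X_{n}}$ to equality in the $Y_{n}$-coordinate, pick $s_{n}\in S_{n}$, and let $E_{n}\subseteq X_{n}$ be the preimage of $\{s_{n}\}\times Y_{n}$; then $E:=\bigsqcup_{n}E_{n}$ is a Borel transversal. For (ii) $\Rightarrow$ (iii), given a Borel transversal $E$, I would set $f(x):=\gamma_{n(x)}(x)$ where $n(x)$ is the least index with $\gamma_{n}(x)\in E$, which exists since $E$ meets every orbit; then $f$ is Borel (being piecewise Borel on the Borel partition $\{x\mid n(x)=k\}$), takes values in $E$, and is constant on $\mathcal{R}$-classes because $E$ meets each class exactly once, hence is a Borel selector. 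For (iii) $\Rightarrow$ (iv), the image of a Borel selector $f$ is $\Fix(f)=\{x\mid f(x)=x\}$, a Borel and hence standard Borel subset of $X$, and the induced bijection $X/\mathcal{R}\to\Fix(f)$ identifies the quotient $\sigma$-algebra with the Borel $\sigma$-algebra of $\Fix(f)$: indeed, for $A\subseteq\Fix(f)$ the saturated set $f^{-1}(A)$ is Borel in $X$ if and only if $A=f^{-1}(A)\cap\Fix(f)$ is Borel in $\Fix(f)$. Finally, for (iv) $\Rightarrow$ (i), the quotient map $\pi\colon X\to X/\mathcal{R}$ is a Borel surjection onto a standard Borel space with countable fibres, so Lusin--Novikov yields a Borel section $\sigma$, and $E:=\sigma(X/\mathcal{R})$ is a Borel transversal by Lusin--Souslin; partitioning $X$ into the $\mathcal{R}$-invariant Borel sets $X_{n}=\{x\mid\#[x]_{\mathcal{R}}=n\}$ (Borel because $\{x\mid\#[x]_{\mathcal{R}}\ge m\}=\bigcup_{k_{1}<\dots<k_{m}}\{x\mid\gamma_{k_{1}}(x),\dots,\gamma_{k_{m}}(x)\text{ pairwise distinct}\}$ is Borel) and enumerating each orbit starting from its unique point in $E$ gives Borel isomorphisms $X_{n}\cong\{0,\dots,n-1\}\times(E\cap X_{n})$ showing $\mathcal{R}|_{X_{n}}$ is of type $\mathrm{I}_{n}$.

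Each step is short once the right tool is invoked; the point needing genuine care is the Borel-structure identification in (iii) $\Rightarrow$ (iv) --- verifying that the abstract quotient $\sigma$-algebra on $X/\mathcal{R}$ really is the standard Borel structure transported from $\Fix(f)$ --- together with the Borelness verifications in (ii) $\Rightarrow$ (iii) and (iv) $\Rightarrow$ (i) (the selector, the cardinality function, the transversal image), which is exactly where Feldman--Moore, Lusin--Novikov and Lusin--Souslin are essential. As the excerpt remarks, complete details can be found in \cite[Chapter 2]{CalderoniLectureNotes09}.
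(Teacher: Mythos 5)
Your proof is correct. The paper itself gives no proof of this theorem --- it is stated as a well-known result with a pointer to \cite[Chapter 2]{CalderoniLectureNotes09} --- so there is no argument in the paper to compare against; your cyclic chain of implications, resting on Feldman--Moore, Lusin--Novikov and Lusin--Souslin, is the standard argument and all the Borelness verifications (the selector in (ii)$\Rightarrow$(iii), the identification of the quotient Borel structure with that of $\Fix(f)$ in (iii)$\Rightarrow$(iv), and the cardinality sets and transversal image in (iv)$\Rightarrow$(i)) are handled correctly.
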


Ergodic equivalence relations can be thought of as building blocks of general Borel equivalence relations. We make this more precise in the next theorem which was proved by Dang Ngoc Nghiem in \cite{Nghiem75}. Our formulation is similar to the one in \cite[Proposition 3.2]{Feldman-Moore-1} except that we do not assume the field of standard Borel spaces to be constant. Instead we relate it to the unified new framework of measurable fields of separable structures introduced in \cite[Appendix A]{Wouters23}, \cite{VaesWouters24}. We note here that there occurs a more general ergodic decomposition theorem for measured groupoids with a Haar system in \cite[Theorem 6.1]{Hahn78} which in the case of principal discrete measured groupoids corresponds to our setting. As noted by Hahn in \cite{Hahn78}, the theorem was also proved independently by Ramsay in \cite{Ramsay80}. Note that the space of essentially bounded measurable functions $L^\infty(X, \mu)$ on a standard probability space $(X, \mu)$ forms an Abelian von Neumann algebra. If $\cR$ is an equivalence relation on $(X, \mu)$, we denote by $L^{\infty}(X,\mu)^{\mathcal{R}}$ its subalgebra of $\cR$-invariant functions.

Now suppose that $(Z,\eta)$ is a standard probability space, let $Y:=(Y_{z})_{z\in Z}$ be a measurable field of standard Borel spaces as in \cite[Definition A.2.1]{Wouters23} and let $\pi:Y\rightarrow Z$ be the Borel projection map. Suppose furthermore that $(\nu_{z})_{z\in Z}$ is a measurable field of probability measures on $Y$. Then a family of ergodic equivalence relations $\mathcal{R}:=(\mathcal{R}_{z})_{z\in Z}$ that are quasi-invariant with respect to $\nu_{z}$ forms a \textit{measurable field} of Borel equivalence relations if $\mathcal{R}$ is a Borel subset of $Y\times Y$. By abuse of notation we denote by $Y$ and $\mathcal{R}$ the measurable fields as well as the standard Borel structures on the respective disjoint unions. Let $\nu$ be the probability measure on $Y$ obtained by integration of $(\nu_{z})_{z\in Z}$ with respect to $\eta$. Then we can define a countable Borel equivalence relation $\mathcal{R}$ on $(Y,\nu)$ via $(y,y^{\prime})\in\mathcal{R}$ if and only if $\pi(y)=\pi(y^{\prime})$ and $(y,y')\in\mathcal{R}_{\pi(y)}$. We call $\mathcal{R}$ the \textit{direct integral of the field $(\mathcal{R}_{z})_{z\in Z}$} and denote it by $\int_{Z}^{\oplus}\mathcal{R}_{z}d\eta(z)$.

\begin{theorem}[{\cite[Theorem 6.1]{Hahn78}, \cite[Proposition 3.2]{Feldman-Moore-1}}] \label{Thm: ergodic decomposition theorem} \label{Thm: equivalence relations admit an ergodic decomposition} Let $\mathcal{R}$ be a countable Borel equivalence relation on a standard probability space $(X,\mu)$ and let $(Z,\eta)$ be a standard probability space such that the Abelian von Neumann algebra $L^{\infty}(X,\mu)^{\mathcal{R}}$ is isomorphic to $L^{\infty}(Z,\eta)$.\\
Then $\mathcal{R}$ is isomorphic to $\int_{Z}^{\oplus}\mathcal{R}_{z}d\eta(z)$ for a measurable field of countable non-singular equivalence relations $(\mathcal{R}_{z})_{z\in Z}$ on a Borel field of standard Borel spaces and probability measures $Y:=(Y_{z},\nu_{z})_{z\in Z}$ such that, denoting the projection $Y\rightarrow Z$ by $\pi$, any invariant Borel subset of $X$ is of the form $\pi^{-1}(A)$ for a Borel set $A\subseteq Z$ up to measure zero and such that for $\eta$-almost every $z\in Z$ the equivalence relation $\mathcal{R}_{z}$ is ergodic.\\
Moreover, if $X_{k}:=\pi^{-1}(Z_{k})$ are the summands of $\mathcal{R}$ corresponding to the different types, then $\mathcal{R}_{z}$ is of type $k$ for $\eta$-almost every $z\in Z_{k}$. \end{theorem}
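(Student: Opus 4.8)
The plan is to extract the decomposition from the abstract description $L^{\infty}(X,\mu)^{\mathcal{R}}\cong L^{\infty}(Z,\eta)$ via a measure disintegration, and then to verify ergodicity of the fibers by a measurable selection argument; the type statement follows by tracking the type-decomposition summands through this disintegration. Since the result is essentially due to Dang Ngoc Nghiem \cite{Nghiem75}, Ramsay \cite{Ramsay80}, Hahn \cite{Hahn78}, and Feldman--Moore \cite{Feldman-Moore-1}, the only genuinely new point is to phrase the base space as a (possibly non-constant) measurable field of standard Borel spaces in the sense of \cite[Appendix A]{Wouters23}, so I indicate the steps at the level of a sketch. \emph{Step 1 (disintegration).} Since $\mu$ is quasi-invariant for $\mathcal{R}$, there is a normal conditional expectation onto $L^{\infty}(X,\mu)^{\mathcal{R}}$, and the inclusion $L^{\infty}(Z,\eta)\cong L^{\infty}(X,\mu)^{\mathcal{R}}\hookrightarrow L^{\infty}(X,\mu)$ of Abelian von Neumann algebras is therefore induced, by the Rokhlin--von Neumann disintegration theorem, by an essentially unique Borel map $\pi\colon X\to Z$ with $\pi_{\ast}\mu\sim\eta$ together with a disintegration $\mu=\int_{Z}\nu_{z}\,d\eta(z)$ in which $\nu_{z}$ is a probability measure concentrated on $Y_{z}:=\pi^{-1}(z)$. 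Discarding an $\eta$-null set of $z$'s (and the corresponding $\mu$-null subset of $X$), the family $(Y_{z},\nu_{z})_{z\in Z}$ is a measurable field of standard Borel spaces with probability measures whose integral recovers $(X,\mu)$.

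\emph{Step 2 (class es live in fibers; fiber relations).} By the Feldman--Moore theorem write $\mathcal{R}=\bigcup_{n}\mathrm{graph}(\varphi_{n})$ for Borel partial isomorphisms $\varphi_{n}$ of $X$. If $\pi\circ\varphi_{n}\neq\pi$ on a set of positive measure, then transporting a suitable bounded Borel function of $\pi$ by $\varphi_{n}$ and averaging over orbits would produce an $\mathcal{R}$-invariant function that is not $\pi$-measurable, contradicting $L^{\infty}(X,\mu)^{\mathcal{R}}=\pi^{\ast}L^{\infty}(Z,\eta)$; hence each $\varphi_{n}$ preserves $\pi$ off a null set. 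Removing these null sets, every $\varphi_{n}$ restricts to a Borel partial isomorphism of each $Y_{z}$, so $\mathcal{R}_{z}:=\mathcal{R}\cap(Y_{z}\times Y_{z})=\bigcup_{n}\mathrm{graph}(\varphi_{n}|_{Y_{z}})$ is a countable Borel equivalence relation on $Y_{z}$, and $(\mathcal{R}_{z})_{z\in Z}$ is a measurable field (it is Borel in $Y\times Y$). Disintegrating the Radon--Nikodym cocycles $\tfrac{d(\mu\circ\varphi_{n})}{d\mu}$ over $\pi$ shows that for $\eta$-almost every $z$ the relation $\mathcal{R}_{z}$ is quasi-invariant with respect to $\nu_{z}$. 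By construction $\mathcal{R}=\int_{Z}^{\oplus}\mathcal{R}_{z}\,d\eta(z)$, and, again from $L^{\infty}(X,\mu)^{\mathcal{R}}=\pi^{\ast}L^{\infty}(Z,\eta)$, every $\mathcal{R}$-invariant Borel subset of $X$ equals $\pi^{-1}(A)$ modulo $\mu$-null sets for some Borel $A\subseteq Z$.

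\emph{Step 3 (ergodicity of almost every fiber).} Fix a countable algebra $\{B_{k}\}_{k}$ of Borel subsets of $X$ generating the Borel $\sigma$-algebra. Using the $\varphi_{n}$'s and the $B_{k}$'s one checks that the set $Z_{\mathrm{ne}}$ of $z$ for which $\mathcal{R}_{z}$ is not ergodic is Borel, and on $Z_{\mathrm{ne}}$ the Jankov--von Neumann selection theorem yields a Borel assignment $z\mapsto E_{z}\subseteq Y_{z}$ of $\mathcal{R}_{z}$-invariant Borel sets with $0<\nu_{z}(E_{z})<1$. If $\eta(Z_{\mathrm{ne}})>0$, then $E:=\bigcup_{z\in Z_{\mathrm{ne}}}E_{z}$ is an $\mathcal{R}$-invariant Borel subset of $X$ that is not, modulo null sets, of the form $\pi^{-1}(A)$, contradicting Step 2; hence $\mathcal{R}_{z}$ is ergodic for $\eta$-almost every $z$, proving the first assertion. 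I expect this step to be the main obstacle: making $Z_{\mathrm{ne}}$ Borel and selecting the $E_{z}$ in a Borel fashion requires careful use of the Lusin--Novikov uniformization and Jankov--von Neumann selection theorems for the countable-to-one maps involved; everything else is bookkeeping with disintegrations.

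\emph{Step 4 (types).} Let $X_{k}=\pi^{-1}(Z_{k})$ be the summands of $\mathcal{R}$ from the type decomposition recalled before the statement; these are $\mathcal{R}$-invariant, hence by Step 2 they are pullbacks of Borel subsets $Z_{k}\subseteq Z$ up to null sets. On $Z_{k}$ one verifies that $\mathcal{R}_{z}$ has type $k$ for $\eta$-almost every $z$: for type I this follows from the smoothness criterion of Theorem~\ref{Theorem: type I eq rel}, since a Borel selector for $\mathcal{R}|_{X_{k}}$ restricts to a Borel selector of $\mathcal{R}_{z}$ for almost every $z$; for types II$_{1}$, II$_{\infty}$, and III one disintegrates the relevant equivalent (quasi-)invariant measure over $\pi$ and invokes uniqueness of the type of an ergodic equivalence relation together with the ergodicity obtained in Step 3. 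This completes the proof.
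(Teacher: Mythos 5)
The paper does not prove this theorem: it is quoted verbatim from the literature (Hahn, Feldman--Moore, and, as the surrounding text notes, Nghiem and Ramsay), with the only novelty being the phrasing over a non-constant measurable field of standard Borel spaces. So there is no in-paper proof to compare your sketch against; what follows is an assessment of the sketch on its own terms. Your outline follows the classical route (disintegrate $L^{\infty}(X,\mu)^{\mathcal{R}}\hookrightarrow L^{\infty}(X,\mu)$, show the Feldman--Moore partial isomorphisms preserve the fibers, deduce fiberwise ergodicity from the identification of the invariant subalgebra, then track types), and at the level of a sketch this is the right skeleton.

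Two points deserve attention. First, in Step 2 your justification is both roundabout and shaky: "averaging over orbits" to manufacture a non-$\pi$-measurable invariant function presupposes a mean on the (generally infinite, non-p.m.p.) orbits, which need not exist. The correct and much shorter argument goes in the other direction: since $\pi^{\ast}L^{\infty}(Z,\eta)=L^{\infty}(X,\mu)^{\mathcal{R}}$, every function $f\circ\pi$ is $\mathcal{R}$-invariant, so $f(\pi(\varphi_{n}(x)))=f(\pi(x))$ for a.e.\ $x\in\mathrm{dom}(\varphi_{n})$; taking a countable separating family of $f$'s forces $\pi\circ\varphi_{n}=\pi$ a.e. Second, Step 3 as written has a genuine gap that you flag but do not close: the set $Z_{\mathrm{ne}}$ is defined by an existential quantifier over Borel subsets of $Y_{z}$, and the Borel sets of $Y_{z}$ do not form a standard Borel space, so neither the Borelness of $Z_{\mathrm{ne}}$ nor the applicability of Jankov--von Neumann is available in the form you state. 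The standard repair is to work in the measure algebra, i.e.\ to encode candidate invariant sets as idempotents $\xi\in L^{1}(Y_{z},\nu_{z})$ inside the direct-integral structure: the conditions $\xi^{2}=\xi$, $\xi\circ\varphi_{n}=\xi$ on $\mathrm{dom}(\varphi_{n})\cap Y_{z}$, and $0<\int\xi\,d\nu_{z}<1$ are then Borel, $Z_{\mathrm{ne}}$ becomes analytic, and Jankov--von Neumann yields a universally measurable selection $z\mapsto E_{z}$, which suffices after discarding a null set. With these two repairs the sketch is a correct outline of the cited classical proof.
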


An equivalence relation $\mathcal{R}$ on $(X,\mu)$ is called \emph{hyperfinite} if it can be written as a countable union $\bigcup_{n}\mathcal{R}_{n}$ of finite equivalence relations. As a consequence of the main result in \cite{Connes_Feldman_Weiss_1981}, an equivalence relation is hyperfinite if and only if it is amenable. Notice that if $\mathcal{R}$ admits an ergodic decomposition $\int_{Z}^{\oplus}\mathcal{R}_{z}d\eta(z)$, then $\mathcal{R}$ is hyperfinite if and only if $\mathcal{R}_{z}$ is hyperfinite for $\eta$-almost every $z\in Z$. It is furthermore easy to see that every type I equivalence relation (in particular every finite equivalence relation) is hyperfinite.

Up to isomorphism there is for every $n\in\mathbb{N}\cup\{\infty\}$ a unique ergodic equivalence relation of type I$_{n}$ given by considering the full equivalence relation on a set of $n$ elements with respect to the counting measure. There also exists a unique hyperfinite ergodic equivalence relation of type II$_{1}$ and of type II$_{\infty}$.

For an ergodic equivalence relation $\mathcal{R}$ on $(X,\mu)$, recall the notion of its \emph{Maharam extension}. This is an equivalence relation $c(\mathcal{R})$ on $X\times\mathbb{R}$ together with a measure scaling $\mathbb{R}$-action. The restriction of the induced $\mathbb{R}$-action to the Abelian von Neumann algebra $L^{\infty}(X\times\mathbb{R})^{c(\mathcal{R})}$ is called the \textit{associated Krieger flow} of $\mathcal{R}$. For type I and type II ergodic equivalence relations, the flow is always isomorphic to the translation action $\mathbb{R}\curvearrowright\mathbb{R}$. Ergodic hyperfinite type III equivalence relations are completely classified by their associated flows and every ergodic flow arises as the associated flow of some hyperfinite ergodic type III equivalence relation. For a detailed treatment of equivalence relations, Maharam extensions and flows we refer the reader to \cite[Chapter 1.6]{Bramthesis}.

Finally, recall from \cite{JonesSchmidt87} that a countable Borel equivalence relation $\mathcal{R}$ is called \textit{stable} if $\mathcal{R}\times\mathcal{R}_{0}$ is isomorphic to $\mathcal{R}$ for the unique p.m.p hyperfinite ergodic equivalence relation $\mathcal{R}_{0}$. It is well known (and follows in particular from \cite[Theorem 3.4]{JonesSchmidt87}) that every hyperfinite ergodic equivalence relation that is not of type I is stable.

\subsection{Semi-direct product groupoids}

A discrete measured groupoid $(\mathcal{G},\mu)$ comes with an associated countable Borel equivalence relation $\mathcal{R}$ on $(X,\mu)$ with $X:=\mathcal{G}^{(0)}$, defined by 
\[
\mathcal{R}\coloneqq\{(t(g),s(g)) \mid g\in\mathcal{G}\}\,.
\]
Given a countable Borel equivalence relation $\mathcal{R}$ on a standard probability space $(X,\mu)$, one way to construct a discrete measured groupoid with associated equivalence relation $\mathcal{R}$ (that is not simply $\cR$ itself) is by looking at actions of $\mathcal{R}$ on bundles of groups and considering the corresponding semi-direct products. We now make this more precise. Suppose that $\Gamma:=(\Gamma_{x})_{x\in X}$ is a measurable field of discrete countable groups. An action of $\mathcal{R}$ on $\Gamma$ is given by a family of group isomorphisms $\delta_{(y,x)}:\Gamma_{x}\rightarrow\Gamma_{y}$ for all $(y,x)\in\mathcal{R}$ such that outside a $\mu$-null set the equalities $\delta_{(z,y)}\circ\delta_{(y,x)}=\delta_{(z,x)}$, $\delta_{(y,x)}^{-1}=\delta_{(x,y)}$ and $\delta{(x,x)}=\text{id}_{\Gamma_{x}}$ hold. 

The \emph{semi-direct product groupoid} $\mathcal{G}:=\Gamma\rtimes_{\delta}\mathcal{R}$ consists of elements of the form $(g,(y,x))$ for all $g\in\Gamma_{x}$ and $(y,x)\in\mathcal{R}$. Heuristically, these elements can be viewed as ``arrows'' from $x$ to $y$. Accordingly we define the source and targets maps on $\mathcal{G}$ via $s(g,(y,x)):=x$ and $t(g,(y,x)):=y$. The unit space consists of the elements $\{(e,x,x) \; | \; x \in X\}$ and we identify it with $X$. Multiplication and the inverse in $\mathcal{G}$ are respectively defined by
\[
(h,(z,y))\circ(g,(y,x))=(\delta_{(x,y)}(h)\cdot g,(z,x))
\]
and
\[
(g,(y,x))^{-1}=(\delta_{(y,x)}(g^{-1}),(x,y))\,.
\]

Recall that a countable Borel equivalence relation $\mathcal{R}$ on $(X,\mu)$ is called \textit{treeable} if there exists a Borel graph $T$ on $(X,\mu)$ such that $T\subseteq\mathcal{R}$ and if for $\mu$-almost every $x\in X$ the connected component of $x$ in $T$ is a tree with vertex set $T\cdot x$. Following \cite[Proposition 6.5]{Popa-Shlyakhtenko-Vaes20}, an equivalence relation $\mathcal{R}$ on $(X,\mu)$ is treeable if and only if any discrete measured groupoid $(\mathcal{G},\mu)$ with unit space $\mathcal{G}^{(0)}=X$ and associated equivalence relation $\mathcal{R}$ is isomorphic to a semi-direct product as above. One immediate consequence of \cite{Connes_Feldman_Weiss_1981} is that any amenable equivalence relation is treeable and hence by \cite[Proposition 6.5]{Popa-Shlyakhtenko-Vaes20}, we obtain the following statement.

\begin{proposition}\label{Proposition: semi-direct product groupoid} Let $(\mathcal{G},\mu)$ be a discrete measured groupoid. Let $\Gamma:=(\Gamma_{x})_{x\in\mathcal{G}^{(0)}}$ be the corresponding isotropy bundle and let $\mathcal{R}$ be the associated equivalence relation. If $\cR$ is amenable, then $\mathcal{G}$ is isomorphic to $\Gamma\rtimes_{\delta}\mathcal{R}$ for an action $\delta$ of $\mathcal{R}$ on the isotropy bundle $\Gamma$. \end{proposition}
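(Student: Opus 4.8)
The plan is to obtain the proposition by feeding the Connes--Feldman--Weiss theorem into the treeability criterion of \cite[Proposition 6.5]{Popa-Shlyakhtenko-Vaes20}, and then to identify the resulting group bundle with the isotropy bundle $\Gamma$.

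First I would observe that, since $\cR$ is amenable, it is treeable by \cite{Connes_Feldman_Weiss_1981} together with the discussion recalled just before the proposition. As $\cG$ is a discrete measured groupoid with unit space $X = \cG\zero$ and associated equivalence relation $\cR$, \cite[Proposition 6.5]{Popa-Shlyakhtenko-Vaes20} then supplies a measurable field of countable discrete groups $\Lambda = (\Lambda_x)_{x \in X}$, an action $\delta$ of $\cR$ on $\Lambda$, and an isomorphism of discrete measured groupoids $\theta \colon \cG \xrightarrow{\ \sim\ } \Lambda \rtimes_\delta \cR$ which is the identity on the common unit space $X$.

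It remains to replace $\Lambda$ by the isotropy bundle $\Gamma$ of $\cG$. In any semi-direct product $\Lambda \rtimes_\delta \cR$, an element $(g,(y,x))$ lies in the isotropy subgroupoid precisely when $x = y$, and $g \mapsto (g,(x,x))$ is a group isomorphism from $\Lambda_x$ onto the isotropy group of $\Lambda \rtimes_\delta \cR$ at $x$; these maps assemble into an isomorphism of measurable fields of groups identifying $\Lambda$ with the isotropy bundle of $\Lambda \rtimes_\delta \cR$. On the other hand, any groupoid isomorphism carries isotropy onto isotropy, so $\theta$ restricts to an isomorphism of the isotropy bundle $\Gamma$ of $\cG$ onto that of $\Lambda \rtimes_\delta \cR$. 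Composing these identifications yields an isomorphism $\Phi = (\Phi_x)_{x \in X} \colon \Gamma \to \Lambda$ of measurable fields of groups. Setting $\delta'_{(y,x)} := \Phi_y^{-1} \circ \delta_{(y,x)} \circ \Phi_x$ defines an action $\delta'$ of $\cR$ on $\Gamma$, and a direct comparison of the multiplication and inversion formulas shows that $(g,(y,x)) \mapsto (\Phi_x^{-1}(g),(y,x))$ is a groupoid isomorphism $\Lambda \rtimes_\delta \cR \to \Gamma \rtimes_{\delta'} \cR$. Precomposing it with $\theta$ gives the desired isomorphism $\cG \cong \Gamma \rtimes_{\delta'} \cR$, after which one simply relabels $\delta'$ as $\delta$.

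I do not expect a genuine obstacle here: the content of the statement is entirely absorbed by the two cited results, and what remains is measurable bookkeeping. The only points demanding (routine) attention are that the isomorphism produced by \cite[Proposition 6.5]{Popa-Shlyakhtenko-Vaes20} can be taken to fix the unit space pointwise --- so that $\Phi$ is fibrewise an isomorphism $\Gamma_x \cong \Lambda_x$ over the same point $x$ rather than over a point in its $\cR$-class --- and that $\Phi$ and $\delta'$ are Borel with the cocycle identities for $\delta'$ holding $\mu$-almost everywhere; both follow at once from the corresponding properties of $\theta$ and $\delta$.
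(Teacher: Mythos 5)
Your proposal is correct and follows the same route as the paper, which derives the proposition directly from treeability of amenable equivalence relations (via Connes--Feldman--Weiss) combined with \cite[Proposition 6.5]{Popa-Shlyakhtenko-Vaes20}. The additional bookkeeping identifying the group bundle from that proposition with the isotropy bundle of $\mathcal{G}$ is left implicit in the paper, and your explicit treatment of it is accurate.
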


\subsection{Markov chains and martingales}

The final step in the proof of Theorem \ref{Main Theorem CD} requires the study of Markov chains on discrete measured groupoids, as well as the theory of martingales. For the convenience of the reader, we introduce the basic notions of these topics.

\subsubsection{Markov chains} \label{MarkovChains}

Consider a stochastic matrix $P:=(P(x,y))_{x,y\in X}$ on a countable set $X$, i.e., $P(x,y)\geq0$ and $\sum_{z\in X}P(x,z)=1$ for all $x,y\in X$. Similar to the setting in Subsection \ref{MeasuredGroupoids}, such a matrix induces a \emph{Markov operator} on $\ell^{\infty}(X)$ by mapping a function $f\in\ell^{\infty}(G)$ to $x\mapsto\sum_{y\in X}p(x,y)f(y)$. By abuse of notation we will denote the Markov operator corresponding to the matrix $P$ by $P$ as well. A \emph{Markov chain} with \emph{transition matrix} $P$ and \emph{starting point} $x\in X$ is a sequence of random variables $(X_{i})_{i\in\mathbb{N}}$ on a probability space $(\Omega,\mathcal{F},\mu_{x})$ with values in $X$ such that 
\[
\mu_{x}\left(\{\omega\in\Omega \mid X_{0}(\omega)=x_{0},\ldots,X_{i}(\omega)=x_{i}\}\right)=\delta_{x,x_{0}}\prod_{j=0}^{i-1}P(x_{j},x_{j+1})
\]
for any finite sequence of elements $x_{0},\ldots,x_{n}\in X$. Here $\delta$ denotes the \emph{Kronecker delta function}. The Markov chain $(X_{i})_{i\in\mathbb{N}}$ is said to be \emph{irreducible}, if for any two elements $x,y\in X$ there exists a sequence $x_{0}:=x$, $x_{1},\ldots,x_{i-1},$ $x_{i}:=y$ in $X$ with $\mu_{x}\left(\left\{ \omega\in\Omega\mid X_{0}(\omega)=x_{0},\ldots,X_{i}(\omega)=x_{i}\right\} \right)>0$. Equivalently, $(X_{i})_{i\in\mathbb{N}}$ is irreducible if the matrix $P$ is \emph{irreducible} in the sense that for any two elements $x,y\in X$ there exists a natural number $i$ with $P^{i}(x,y)>0$.

For more details on Markov chains see \cite[Chapter 6]{Lalley23} or also \cite[Chapter 3]{Yadin23}.

\subsubsection{Martingales}

Let $(\Omega,\mathcal{F},\mu)$ be a probability space and $\mathcal{F}^{\prime}\subseteq\mathcal{F}$ a sub-$\sigma$-algebra. For any $\mathcal{F}$-measurable, integrable random variable $X:(\Omega,\mathcal{F},\mu)\rightarrow\mathbb{R}$ there exists an $\mu$-almost surely unique $\mathcal{F}^{\prime}$-measurable, integrable random variable $Y:(\Omega,\mathcal{F}^{\prime},\mu)\rightarrow\mathbb{R}$ such that 
\[
\mathbb{E}_{\mu}[X1_{F}]=\int_{F}X(\omega)d\mu(\omega)=\int_{F}Y(\omega)d\mu(\omega)=\mathbb{E}_{\mu}[Y1_{F}]
\]
for every $F\in\mathcal{F}^{\prime}$ \cite[Appendix A9]{Lalley23}. The random variable $Y$ is usually called the \emph{conditional expectation} of $X$ with resepct to $\mathcal{F}^{\prime}$ and we denote it by $\mathbb{E}_{\mu}[X\mid\mathcal{F}^{\prime}]$. The map $X\mapsto\mathbb{E}_{\mu}[X\mid\mathcal{F}^{\prime}]$ is linear and restricts to the identity on $\mathcal{F}^{\prime}$-measurable random variables. Note that $\mathbb{E}_{\mu}[\mathbb{E}_{\mu}[X\mid\mathcal{F}^{\prime}]]=\mathbb{E}_{\mu}[X]$ for every $\mathcal{F}$-measurable, integrable random variable $X$.

A \emph{filtration} $(\mathcal{F}_{i})_{i\in\mathbb{N}}$ is a nested sequence of sub-$\sigma$-algebras $\mathcal{F}_{0}\subseteq\mathcal{F}_{1}\subseteq\ldots\subseteq\mathcal{F}$. A sequence $(X_{i})_{i\in\mathbb{N}}$ of real-valued random variables on $(\Omega,\mathcal{F},\mu)$ is called \emph{adapted} to $(\mathcal{F}_{i})_{i\in\mathbb{N}}$ if for every $i\in\mathbb{N}$ the random variable $X_{i}$ is $\mathcal{F}_{i}$-measurable.

\begin{definition} Let $(\Omega,\mathcal{F},\mu)$ be a probability space and $(\mathcal{F}_{i})_{i\in\mathbb{N}}$ a filtration. An adapted sequence $(X_{i})_{\in\mathbb{N}}$ of real-valued random variables is called a \emph{martingale}, if $\mathbb{E}_{\mu}[|X_{i}|]<\infty$ and $\mathbb{E}_{\mu}[X_{i+1}\mid\mathcal{F}_{i}]=X_{i}$ $\mu$-almost surely for all $i\in\mathbb{N}$. \end{definition}

We will make use of \emph{Doob's optional stopping formula}. The version we will use occurs in \cite[Theorem 2.3.3]{Yadin23}.

\begin{definition} Let $(\Omega,\mathcal{F},\mu)$ be a probability space and $(\mathcal{F}_{i})_{i\in\mathbb{N}}$ a filtration. A random variable $T:(\Omega,\mathcal{F},\mu)\rightarrow\mathbb{Z}_{+}\cup\{\infty\}$ is called a \emph{stopping time} for $(\mathcal{F}_{i})_{i\in\mathbb{N}}$, if $T^{-1}(\{i\})\in\mathcal{F}_{i}$ for every $i\in\mathbb{N}$. If $(X_{i})_{i\in\mathbb{N}}$ is a martingale, we denote the random variable $(\Omega,\mathcal{F},\mu)\rightarrow\mathbb{R},\omega\mapsto X_{T(\omega)}(\omega)$ by $X_{T}$. \end{definition}

\begin{theorem}[Doob's optional stopping formula] \label{DoobTheorem} Let $(\Omega,\mathcal{F},\mu)$ be a probability space, $(\mathcal{F}_{i})_{i\in\mathbb{N}}$ a filtration, $(X_{i})_{i\in\mathbb{N}}$ a matingale and $T$ a stopping time for $(\mathcal{F}_{i})_{i\in\mathbb{N}}$. Assume that $T <\infty$ $\mu$-almost surely, that $(X_{i})_{i\in\mathbb{N}}$ is \emph{uniformly integrable} in the sense that $\lim_{K\rightarrow\infty}\sup_{i\in\mathbb{N}}\mathbb{E}_{\mu}[|X_{i}|\cdot1_{\{\omega\in\Omega\mid|X_{i}(\omega)|>K\}}]=0$ and that $\mathbb{E}_{\mu}[|X_{T}|]<\infty$. Then, 
\[
\mathbb{E}_{\mu}[X_{0}]=\int_{\Omega}X_{0}(\omega)d\mu(\omega)=\int_{\Omega}X_{T}(\omega)d\mu(\omega)=\mathbb{E}_{\mu}[X_{T}]\,.
\]
\end{theorem}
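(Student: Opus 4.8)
The plan is to pass from the martingale $(X_{i})_{i\in\mathbb{N}}$ to its stopped version and then take a limit, using the two integrability hypotheses exactly where they are needed. First I would introduce the stopped sequence $X_{i}^{T}:=X_{\min(i,T)}$ and check that it is again a martingale for the filtration $(\mathcal{F}_{i})_{i\in\mathbb{N}}$. This is the routine part: because $T$ is a stopping time we have $\{T\geq i+1\}=\{T\leq i\}^{c}\in\mathcal{F}_{i}$, and since $X_{i+1}^{T}-X_{i}^{T}=(X_{i+1}-X_{i})\,1_{\{T\geq i+1\}}$ with the factor $1_{\{T\geq i+1\}}$ being $\mathcal{F}_{i}$-measurable, pulling this factor out of the conditional expectation and invoking the martingale property gives $\mathbb{E}_{\mu}[X_{i+1}^{T}-X_{i}^{T}\mid\mathcal{F}_{i}]=1_{\{T\geq i+1\}}\,\mathbb{E}_{\mu}[X_{i+1}-X_{i}\mid\mathcal{F}_{i}]=0$; adaptedness and integrability of $X_{i}^{T}$ follow from the identity $X_{i}^{T}=\sum_{j=0}^{i-1}X_{j}1_{\{T=j\}}+X_{i}1_{\{T\geq i\}}$, a finite sum of integrable $\mathcal{F}_{i}$-measurable random variables. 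Consequently $\mathbb{E}_{\mu}[X_{\min(i,T)}]=\mathbb{E}_{\mu}[X_{\min(0,T)}]=\mathbb{E}_{\mu}[X_{0}]$ for every $i\in\mathbb{N}$.

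Next, since $T<\infty$ $\mu$-almost surely, for $\mu$-almost every $\omega$ one has $\min(i,T(\omega))=T(\omega)$ once $i$ exceeds $T(\omega)$, so $X_{\min(i,T)}\to X_{T}$ pointwise $\mu$-almost surely. It therefore suffices to prove $\mathbb{E}_{\mu}[X_{\min(i,T)}]\to\mathbb{E}_{\mu}[X_{T}]$ as $i\to\infty$: combined with the previous paragraph this yields $\mathbb{E}_{\mu}[X_{0}]=\mathbb{E}_{\mu}[X_{T}]$, which is the assertion. Using $X_{T}-X_{\min(i,T)}=(X_{T}-X_{i})\,1_{\{T>i\}}$, I would estimate
\[
\left|\mathbb{E}_{\mu}[X_{T}]-\mathbb{E}_{\mu}[X_{\min(i,T)}]\right|\;\leq\;\mathbb{E}_{\mu}\!\left[|X_{T}|\,1_{\{T>i\}}\right]+\mathbb{E}_{\mu}\!\left[|X_{i}|\,1_{\{T>i\}}\right].
\]
The first summand tends to $0$ by dominated convergence, since $\mathbb{E}_{\mu}[|X_{T}|]<\infty$ and $1_{\{T>i\}}\downarrow 0$ $\mu$-a.s.

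The second summand is where the work lies, and this is the step I expect to be the genuine obstacle. Fixing $\eps>0$, uniform integrability of $(X_{i})_{i\in\mathbb{N}}$ furnishes $K>0$ with $\sup_{i\in\mathbb{N}}\mathbb{E}_{\mu}[|X_{i}|\,1_{\{|X_{i}|>K\}}]<\eps$, and then
\[
\mathbb{E}_{\mu}\!\left[|X_{i}|\,1_{\{T>i\}}\right]\;\leq\;\mathbb{E}_{\mu}\!\left[|X_{i}|\,1_{\{|X_{i}|>K\}}\right]+K\,\mu(\{T>i\})\;<\;\eps+K\,\mu(\{T>i\}).
\]
Since $T<\infty$ $\mu$-almost surely, $\mu(\{T>i\})\to 0$, so the right-hand side is below $2\eps$ for all large $i$; as $\eps$ was arbitrary, $\mathbb{E}_{\mu}[|X_{i}|\,1_{\{T>i\}}]\to 0$, which closes the argument. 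The point to stress is that mere $L^{1}$-boundedness of $(X_{i})_{i\in\mathbb{N}}$ would not suffice here — it is precisely uniform integrability that controls $\mathbb{E}_{\mu}[|X_{i}|\,1_{\{T>i\}}]$ as the event $\{T>i\}$ shrinks — while the hypothesis $\mathbb{E}_{\mu}[|X_{T}|]<\infty$ is exactly what powers the dominated-convergence step for the first summand. (Alternatively, one could phrase this limit transition by showing directly that the stopped process $(X_{i}^{T})_{i\in\mathbb{N}}$ is uniformly integrable and appealing to the Vitali convergence theorem, but the above estimate avoids invoking that machinery.)
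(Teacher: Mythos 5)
Your proof is correct: the stopped process $X_{i}^{T}=X_{\min(i,T)}$ is indeed a martingale, the decomposition $X_{T}-X_{\min(i,T)}=(X_{T}-X_{i})\,1_{\{T>i\}}$ is valid, and the two error terms are handled with exactly the right hypotheses (integrability of $X_{T}$ for the dominated-convergence step, uniform integrability plus $\mu(\{T>i\})\to 0$ for the other). Note that the paper does not prove this statement at all --- it is quoted from the cited lecture notes of Yadin (Theorem 2.3.3 there) --- so there is no in-paper argument to compare against; your argument is the standard self-contained proof one would find in that reference, and it is complete as written.
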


More information on martingales can be found in \cite[Chapter 8]{Lalley23} or \cite[Chapter 2]{Yadin23}.

\section{The Choquet--Deny property for groupoids} \label{MainSection}

Let $\Gamma$ be a countable discrete group, equipped with a probability measure $\mu\in\text{Prob}(\Gamma)$. Analogous to the general definition in Subsection \ref{MeasuredGroupoids}, a function $f$ on $\Gamma$ is called \emph{$\mu$-harmonic} if $f(g)=\sum_{h\in G}\mu(h)f(gh)$ for all $g\in\Gamma$. As before, the pair $(\Gamma,\mu)$ is called \emph{Liouville} if the space of all bounded $\mu$-harmonic functions is trivial.

A probability measure on a countable discrete group is \emph{non-degenerate}, if its support generates the whole group as a semigroup. Initiated by Choquet and Deny in \cite{ChoquetDeny}, the characterization of groups that are \emph{Choquet--Deny} in the sense that all non-degenerate probability measures on them are Liouville was a long-standing open question. Building on Jaworski's results in \cite{Jaworski}, a full characterization was finally obtained by Frisch, Hartman, Tamuz and Ferdowski in \cite{Frisch2019}. The main result in \cite{Frisch2019} states that a countable discrete group is Choquet--Deny if and only if it is FC-hypercentral. Recall that a group is called \emph{FC-hypercentral} if it admits no \emph{icc} quotients, i.e., no quotients whose conjugacy classes are all infinite.

Motivated by the notions in the group setting, we introduce the following definitions.

\begin{definition} Let $(\mathcal{G},\mu)$ be a discrete measured groupoid and $P$ a Markov operator on $L^\infty(\mathcal{G},\mu_s)$. For $x\in\mathcal{G}^{(0)}$ the \emph{support} of $P_{x}$ is given by the set of all elements $g\in\mathcal{G}^{x}$ with $d_{x}P(g)>0$ and we denote it by $\text{supp}(P_{x})$.\\
The Markov operator $P$ is called non-degenerate, if $\bigcup_{n\in\mathbb{N}}\text{supp}(P_{x}^{n})=\mathcal{G}^{x}$ for all $x\in\mathcal{G}^{(0)}$. \end{definition}

\begin{definition} \label{ChoquetDeny} A discrete measured groupoid $(\mathcal{G},\mu)$ is called \emph{Choquet--Deny} if it is fiberwise Liouville for every non-degenerate invariant Markov operator $P$. \end{definition}

Note that a countable discrete group is Choquet--Deny as a group if and only if it is Choquet--Deny in the sense of Definition \ref{ChoquetDeny}.

In the proofs of the later sections it will be convenient to restrict our attention to Markov operators that are in a certain sense fully supported. This step is legitimized by the following lemma.

\begin{lemma} \label{One-shotMixing} A discrete measured groupoid $(\mathcal{G},\mu)$ is Choquet--Deny if and only if every invariant Markov operator $P$ on $L^\infty(\mathcal{G}, \mu_s)$ with $\text{supp}(d_{x}P)=\mathcal{G}^{x}$ for every $x\in\mathcal{G}^{(0)}$ is fiberwise Liouville. \end{lemma}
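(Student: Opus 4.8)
The plan is to prove both directions of the equivalence, where the forward direction is trivial and the reverse direction requires a reduction argument. For the ``only if'' direction: if $(\mathcal{G},\mu)$ is Choquet--Deny, then by definition every non-degenerate invariant Markov operator is fiberwise Liouville. An invariant Markov operator $P$ with $\operatorname{supp}(d_x P) = \mathcal{G}^x$ for every $x \in \mathcal{G}^{(0)}$ is in particular non-degenerate (since already $\operatorname{supp}(P_x) = \mathcal{G}^x \subseteq \bigcup_n \operatorname{supp}(P_x^n)$), so it is fiberwise Liouville. Hence the condition on the right-hand side holds automatically.

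The substance is the ``if'' direction. Suppose every invariant Markov operator $P$ with full one-step support is fiberwise Liouville, and let $Q$ be an arbitrary non-degenerate invariant Markov operator; I must show $Q$ is fiberwise Liouville. The idea is to build from $Q$ a new invariant Markov operator $P$ that has full one-step support but whose space of bounded harmonic functions on each fiber coincides with that of $Q$ (or is at least as large). The natural construction is a ``resolvent'' / Ces\`aro-type averaging: set
\[
P \;=\; \sum_{n=1}^{\infty} a_n\, Q^n,
\]
where $(a_n)_{n\ge 1}$ is a fixed sequence of strictly positive reals with $\sum_n a_n = 1$ (for instance $a_n = 2^{-n}$). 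First I would check that $P$ is again an invariant Markov operator: each $Q^n$ is invariant and Markov (this is in the preliminaries, where $P^n$ is defined recursively and shown to correspond to the convolution-type measures $d_x P^n$), the convex combination of invariant Markov operators is an invariant Markov operator, and the associated fiberwise measures are $d_x P = \sum_n a_n\, d_x Q^n \in \operatorname{Prob}(\mathcal{G}^x)$. Next, non-degeneracy of $Q$ gives $\bigcup_n \operatorname{supp}(Q_x^n) = \mathcal{G}^x$, and since $a_n > 0$ for all $n$ we get $\operatorname{supp}(d_x P) = \bigcup_n \operatorname{supp}(d_x Q^n) = \mathcal{G}^x$; so $P$ has full one-step support. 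The key algebraic point is then that on each fiber $H^\infty(\mathcal{G}^x, P_x) = H^\infty(\mathcal{G}^x, Q_x)$: if $Q_x f = f$ then $Q_x^n f = f$ for all $n$, hence $P_x f = \sum_n a_n Q_x^n f = \sum_n a_n f = f$; conversely if $P_x f = f$, one argues (via the Markov operator being a contraction on $\ell^\infty$ and the standard fact that harmonic functions for $P_x = \sum a_n Q_x^n$ are exactly the common fixed points of the $Q_x^n$, equivalently of $Q_x$) that $Q_x f = f$. By hypothesis $P$ is fiberwise Liouville, so $H^\infty(\mathcal{G}^x, P_x)$ is trivial for $\mu$-almost every $x$, hence $H^\infty(\mathcal{G}^x, Q_x)$ is trivial for $\mu$-almost every $x$, i.e.\ $Q$ is fiberwise Liouville. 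Since $Q$ was an arbitrary non-degenerate invariant Markov operator, $(\mathcal{G},\mu)$ is Choquet--Deny.

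The main obstacle I anticipate is the Borel-measurability bookkeeping rather than the algebra: one must verify that $g \mapsto d_x P(g)$ (equivalently the family $(\pi_g)_g$) is again a Borel family so that $P$ genuinely defines an invariant Markov operator in the sense of Subsection~\ref{MeasuredGroupoids}. This should follow from the fact that each $Q^n$ is Borel (the defining recursion preserves Borelness) together with the fact that a countable positively-weighted sum of Borel families of measures, summing to a probability measure fiberwise, is Borel — a routine monotone-convergence argument, but one that should be stated. A secondary point to be careful about is the direction $P_x f = f \Rightarrow Q_x f = f$: the cleanest route is to note that $\|Q_x g\|_\infty \le \|g\|_\infty$ for all $g$, so $P_x f = f$ with $f$ bounded forces (by strict convexity of the positive weights, or by a maximum-principle argument on $\operatorname{Re}(f)$ and $\operatorname{Im}(f)$ at points approaching the supremum) each $Q_x^n f = f$, and in particular $Q_x f = f$; I would spell this standard argument out in a sentence or two. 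Everything else is a direct unwinding of the definitions.
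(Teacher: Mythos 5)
Your proof is correct and follows essentially the same route as the paper: both replace the given non-degenerate operator $Q$ by the averaged operator $\sum_{n\ge 1}2^{-n}Q^{n}$, note that non-degeneracy of $Q$ gives full one-step support of the average, and use that any $Q_x$-harmonic function is harmonic for the average. The only difference is that you also discuss the reverse inclusion $H^{\infty}(\mathcal{G}^{x},P_{x})\subseteq H^{\infty}(\mathcal{G}^{x},Q_{x})$, which the paper does not need (and which is the more delicate of the two inclusions); the easy inclusion alone already transfers the Liouville property from the averaged operator back to $Q$.
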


\begin{proof}
The assertion of the ``only if'' direction is trivial. For the ``if'' direction assume that every invariant Markov operator with fully supported transition probabilities is fiberwise Liouville and let $P$ be a non-degenerate invariant Markov operator on $L^{\infty}(\mathcal{G},\mu_{s})$. As discussed in Subsection \ref{MeasuredGroupoids}, for every $n\in\mathbb{N}$ the operator $P^{n}$ is a Markov operator as well. The sequence $(\sum_{i=1}^{n}2^{-i}P^{i})_{n\in\mathbb{N}}$ is furthermore Cauchy and thus norm converges to an element $\widetilde{P}$. The element $\widetilde{P}$ is also a Markov operator, induced by the family $(d_{x}\widetilde{P})_{x\in\mathcal{G}^{(0)}}$ of probability measures given by $d_{x}\widetilde{P}=\sum_{i=1}^{\infty}2^{-i}(d_{x}P^{i})$, and for $x\in\mathcal{G}^{(0)}$ any $P_{x}$-harmonic function must also be $\widetilde{P}_{x}$-harmonic. To conclude the statement of the lemma it hence suffices to show that $\widetilde{P}$ satisfies $\text{supp}(\widetilde{P})=\mathcal{G}^{x}$ for all $x\in\mathcal{G}^{(0)}$. But this is clear since $P$ is non-degenerate.
\end{proof}

\subsection{Equivalence relations}

The goal of this section is to give a complete characterization of the Choquet--Deny property for countable Borel equivalence relations. As a first step, we show in the following proposition that every finite equivalence relation is Choquet--Deny.

\begin{proposition} \label{Prop: finite equivalence relations are CD} Let $\mathcal{R}$ be a finite equivalence relation on a standard probability space $(X,\mu)$. Then $\mathcal{R}$ is Choquet--Deny. \end{proposition}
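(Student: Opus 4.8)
\subsection*{Proof proposal}

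The plan is to reduce the statement to the elementary fact that a finite irreducible Markov chain admits no non-constant bounded harmonic functions. Here $\mathcal{G} = \mathcal{R}$ has trivial isotropy, so fix a non-degenerate invariant Markov operator $P$ on $L^{\infty}(\mathcal{R},\mu_{s})$; I must show that $P_{x}$ is Liouville for $\mu$-almost every $x \in X$. First I would unwind the identifications. Since $\text{Iso}(\mathcal{R})$ is trivial, the target fibre $\mathcal{G}^{x} = r^{-1}(x)$ is in bijection with the $\mathcal{R}$-orbit $[x]$ of $x$ via $(x,y) \mapsto y$, and this orbit is finite by hypothesis. Under this bijection the probability measure $d_{x}P \in \text{Prob}(\mathcal{G}^{x})$ becomes a probability measure on the finite set $[x]$ — this is exactly where the invariance of $P$ is used, as it forces $d_{x}P$ to be concentrated on $\mathcal{G}^{x}$. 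Writing $p(y,z)$ for the resulting transition probabilities, the operator $P_{x}$ on $\ell^{\infty}(\mathcal{G}^{x}) \cong \ell^{\infty}([x])$ is precisely the Markov operator of a finite Markov chain on $[x]$, and $d_{x}P^{n}$ corresponds to the $n$-step distribution of that chain started at $x$.

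Next I would translate non-degeneracy into irreducibility of these finite chains. By definition, $\bigcup_{n\in\mathbb{N}}\text{supp}(P_{x}^{n}) = \mathcal{G}^{x}$ for every $x \in X$, which under the identification above says that for each $x$ and each $y \in [x]$ there is some $n$ with $d_{x}P^{n}((x,y)) > 0$, i.e. the $n$-step transition probability from $x$ to $y$ is positive. Applying this at every point of a fixed orbit (all of whose points share the same orbit) yields that for any $y,z$ in that orbit there is $n$ with $p^{(n)}(y,z) > 0$; that is, the Markov chain on $[x]$ is irreducible.

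Finally I would invoke the maximum principle for finite irreducible chains: if $f \in \ell^{\infty}([x])$ is $P_{x}$-harmonic, it attains its maximum value $M$ at some $y_{0} \in [x]$, and harmonicity at $y_{0}$ together with $\sum_{z} p(y_{0},z) = 1$ forces $f \equiv M$ on $\{z : p(y_{0},z) > 0\}$; iterating and using irreducibility gives $f \equiv M$ on all of $[x]$, so $f$ is constant. Hence $P_{x}$ is Liouville for every $x \in X$ (all orbits being finite), so $P$ is fiberwise Liouville, and since $P$ was an arbitrary non-degenerate invariant Markov operator, $\mathcal{R}$ is Choquet--Deny. I do not expect a serious obstacle: the only points requiring care are the bookkeeping of the identification $\mathcal{G}^{x} \cong [x]$ (in particular that $d_{x}P$ is genuinely a probability measure supported on the orbit) and the passage from the pointwise non-degeneracy condition to honest irreducibility of the chain on the whole orbit. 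One could alternatively first pass to the ergodic decomposition of Theorem~\ref{Thm: ergodic decomposition theorem} and use that a finite ergodic equivalence relation is the full relation on a finite set, but the direct orbitwise argument is cleaner and avoids verifying that the Choquet--Deny property descends through the decomposition.
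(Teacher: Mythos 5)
Your proposal is correct and follows essentially the same route as the paper: both arguments reduce to the maximum principle for a harmonic function on a finite orbit. The only cosmetic difference is that the paper first invokes Lemma~\ref{One-shotMixing} to assume $\supp(d_xP)=\mathcal{G}^x$, so the maximum propagates to the whole orbit in a single step, whereas you keep a general non-degenerate operator and instead iterate the maximum principle along the irreducible finite chain; both variants are sound.
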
 
\begin{proof}
By Lemma \ref{One-shotMixing} it suffices to restrict to invariant Markov operators $P$ on $L^{\infty}(\mathcal{G},\mu_{s})$ with $\text{supp}(d_{x}P)=\mathcal{G}^{x}$ for every $x\in\mathcal{G}^{(0)}$. Thus, let $P$ be such an operator and assume for $x\in X$ with $\#\mathcal{R}^{x}<\infty$ that $f\in\ell^{\infty}(\mathcal{R}^{x})$ is a function with $P_{x}f=f$. We may choose an element $g\in\mathcal{R}^{x}$ with $f(g)=\max_{h\in\mathcal{R}^{x}}f(h)$. By applying the pigeonhole principle to the equality 
\begin{align*}
f(g)=P_{x}f(g)=\sum_{h\in\mathcal{R}^{s(g)}}(d_{s(g)}P)(h)f(gh)\,,
\end{align*}
we obtain $f(gh)=f(g)$ for all $h\in\mathcal{R}^{s(g)}$. It follows that $f$ is constant and hence $\mathcal{R}$ is Choquet--Deny.
\end{proof}
Recall that for a non-singular action of a countable discrete group $\Gamma$ on a standard probability space $(X,\mu)$, the corresponding transformation groupoid $\Gamma\ltimes(X,\mu)$ is an equivalence relation precisely when the action is essentially free. The following proposition shows that if a transformation groupoid of a (not necessarily free) action is Choquet--Deny then the acting group must be Choquet--Deny as well.

\begin{proposition} \label{Prop: transformation groupoid CD =00003D> group CD} 
Let $\Gamma\curvearrowright(X,\mu)$ be a nonsingular action of a countable discrete group $\Gamma$ on a standard probability space $(X,\mu)$ and assume that the corresponding transformation groupoid is Choquet--Deny. Then $\Gamma$ is Choquet--Deny as well. 
\end{proposition}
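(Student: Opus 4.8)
The plan is to argue by contraposition: supposing $\Gamma$ is not Choquet--Deny, I will build a non-degenerate invariant Markov operator on the transformation groupoid $\mathcal{G} := \Gamma \ltimes X$ that fails to be fiberwise Liouville. So fix a non-degenerate probability measure $\nu \in \mathrm{Prob}(\Gamma)$ together with a bounded, non-constant $\nu$-harmonic function $f \in \ell^\infty(\Gamma)$, i.e.\ $f(a) = \sum_{b \in \Gamma}\nu(b) f(ab)$ for all $a \in \Gamma$.

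The key point is that all target fibers of $\mathcal{G}$ are canonically copies of $\Gamma$ in a way compatible with multiplication. Writing the transformation groupoid with the conventions $s(g,x) = x$ and $t(g,x) = gx$, for each $x \in X = \mathcal{G}^{(0)}$ the map $\phi_x \colon \Gamma \to \mathcal{G}^x$, $\phi_x(g) := (g, g^{-1}x)$, is a bijection, and a direct check gives $\phi_x(a)\,\phi_{a^{-1}x}(b) = \phi_x(ab)$ as well as $\phi_x(a)^{-1} = \phi_{a^{-1}x}(a^{-1})$ for $a, b \in \Gamma$. I then define an invariant Markov operator $P$ on $L^\infty(\mathcal{G},\mu_s)$ by setting $d_x P := (\phi_x)_*\nu \in \mathrm{Prob}(\mathcal{G}^x)$ for $x \in X$; since $d_x P$ is concentrated on $\mathcal{G}^x$ and the family is manifestly Borel (everything is built from the Borel group operation, the Borel action, and the fixed measure $\nu$), this indeed determines an invariant Markov operator in the sense of Subsection~\ref{MeasuredGroupoids}.

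Two things then need checking. First, non-degeneracy: an induction on $n$ using the convolution formula $d_x P^{n}(g) = \sum_{h \in \mathcal{G}^x}(d_x P(h))\,(d_{s(h)}P^{n-1}(h^{-1}g))$ together with the multiplicativity of the $\phi_x$ shows $d_x P^n = (\phi_x)_*(\nu^{*n})$, so that $\bigcup_{n} \supp(P_x^n) = \phi_x\big(\bigcup_{n}\supp(\nu^{*n})\big) = \phi_x(\Gamma) = \mathcal{G}^x$ for \emph{every} $x$, using that $\nu$ is non-degenerate on $\Gamma$. Second, for each $x \in X$ set $\widetilde f_x := f \circ \phi_x^{-1} \in \ell^\infty(\mathcal{G}^x)$; unwinding $P_x \widetilde f_x(\phi_x(a)) = \sum_{h \in \mathcal{G}^{a^{-1}x}}(d_{a^{-1}x}P(h))\,\widetilde f_x(\phi_x(a) h)$ and using $\phi_x(a)\phi_{a^{-1}x}(b) = \phi_x(ab)$ gives $P_x \widetilde f_x(\phi_x(a)) = \sum_{b \in \Gamma}\nu(b) f(ab) = f(a) = \widetilde f_x(\phi_x(a))$. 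Hence $\widetilde f_x$ is a bounded $P_x$-harmonic function which is non-constant because $f$ is, so $P_x$ is not Liouville for \emph{any} $x \in X$; in particular $P$ is not fiberwise Liouville, and therefore $\mathcal{G}$ is not Choquet--Deny, contradicting the hypothesis. Thus $\Gamma$ must be Choquet--Deny.

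Since the construction is nothing more than transporting the random walk $(\Gamma,\nu)$ onto the fibers of $\Gamma \ltimes X$, there is no real obstacle; the only step demanding a little care is verifying that the fiberwise identifications $\phi_x \colon \mathcal{G}^x \cong \Gamma$ intertwine groupoid multiplication with group multiplication in precisely the form needed both for the harmonicity computation and for the inductive identification of $d_x P^n$, and confirming that the resulting family $(d_x P)_{x \in X}$ is Borel. Note also that freeness of the action plays no role: the target fibers of the transformation groupoid of an arbitrary nonsingular action already carry these identifications.
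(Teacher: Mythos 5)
Your proposal is correct and is essentially the paper's own argument written in contrapositive form: the Markov operator $d_xP=(\phi_x)_*\nu$ with $\phi_x(g)=(g,g^{-1}x)$ is exactly the family $\pi_x(g,g^{-1}\cdot x):=\nu(g)$ used in the paper, and the harmonicity computation for $\widetilde f_x=f\circ\phi_x^{-1}$ is the same as the paper's computation for $H(g,g^{-1}\cdot x):=F(g)$. The only cosmetic difference is that the paper assumes the groupoid is Choquet--Deny and concludes every $\nu$-harmonic function is constant, while you assume a non-constant harmonic function exists and conclude the groupoid is not Choquet--Deny; both are fine.
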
 

\begin{proof}
Let $\nu$ be a non-degenerate probability measure on $\Gamma$ and write $\mathcal{G}:=\Gamma\ltimes(X,\mu)$. For every $x\in X$ define a probability measure $\pi_{x}$ on $\mathcal{G}^{x}$ by $\pi_{x}(g,g^{-1}\cdot x):=\nu(g)$ for $g\in\Gamma$. Since the support of $\nu$ generates $\Gamma$ as a semigroup (by the discussion in Subsection \ref{MeasuredGroupoids}) the family $(\pi_{x})_{x\in X}$ gives rise to a non-degenerate invariant Markov operator $P$ on $\Gamma\ltimes(X,\mu)$. From the assumption it thus follows that this Markov operator is fiberwise Liouville.

Now let $F\in\ell^{\infty}(\Gamma)$ be a $\nu$-harmonic function, pick an element $x\in X$ for which the Markov operator $P_{x}\colon\ell^{\infty}(\mathcal{G}^{x})\rightarrow\ell^{\infty}(\mathcal{G}^{x})$ is Liouville, and define $H\in\ell^{\infty}(\mathcal{G}^{x})$ by $H(g,g^{-1}\cdot x):=F(g)$ for $g\in\Gamma$. We compute that for all $g\in\Gamma$,

\begin{align*}
P_xH(g, g\inv x) &= \sum_{h\in\Gamma}\nu(h)H\left((g,g^{-1}\cdot x)(h,h^{-1}g^{-1}\cdot x)\right) \\
    &= \sum_{h\in\Gamma}\nu(h)H(gh,h^{-1}g^{-1}\cdot x) = \sum_{h\in\Gamma}\nu(h)F(gh) \\
    &= F(g) = H(g,g^{-1}x)
\end{align*}
so that $H\in H^{\infty}(\mathcal{G}^{x},P_{x})$. Since $P_{x}$ is Liouville, it follows that $H$ is constant. But then also $F$ must be constant. We conclude that $\Gamma$ is Choquet--Deny.
\end{proof}

To prove a converse to Proposition \ref{Prop: finite equivalence relations are CD}, we will need a few auxiliary lemmas. The next result is well known, but we give a proof for completeness. Recall that the group of all permutations of $\mathbb{N}$ that fix all but finitely many elements is denoted by $\mathcal{S}_{\infty}$. It is amenable and icc and therefore not Choquet--Deny by \cite[Theorem~1]{Frisch2019}.

\begin{lemma} \label{Lemma: actions of S_infty} Let $(\mathcal{R},\mu)$ be an ergodic hyperfinite equivalence relation with infinite orbits $\mu$-almost everywhere. Then there exists an essentially free ergodic action $\mathcal{S}_{\infty}\curvearrowright(X,\mu)$ on a a standard probability space $(X,\mu)$ such that the corresponding orbit equivalence relation is isomorphic to $(\mathcal{R},\mu)$. \end{lemma}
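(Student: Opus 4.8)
The statement asserts that every ergodic hyperfinite equivalence relation $(\mathcal{R},\mu)$ with infinite orbits is the orbit equivalence relation of an essentially free ergodic action of $\mathcal{S}_\infty$. The strategy is to combine three inputs: (i) the classification of hyperfinite ergodic equivalence relations — up to isomorphism the only ones with infinite orbits are $\mathcal{R}_{\mathrm{I}_\infty}$ (type $\mathrm{I}_\infty$), the unique type $\mathrm{II}_1$, the unique type $\mathrm{II}_\infty$, and for each ergodic flow a unique type $\mathrm{III}$ relation; (ii) the fact, recalled in the excerpt just before the lemma, that every hyperfinite ergodic equivalence relation not of type $\mathrm{I}$ is \emph{stable}, i.e.\ absorbs the unique p.m.p.\ hyperfinite ergodic relation $\mathcal{R}_0$; and (iii) the existence of \emph{some} essentially free ergodic p.m.p.\ action of $\mathcal{S}_\infty$ realizing $\mathcal{R}_0$. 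Once these are in place, the lemma follows by a product/absorption argument.

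\textbf{Key steps.} First, I would handle the p.m.p.\ case: produce an essentially free ergodic p.m.p.\ action $\mathcal{S}_\infty \curvearrowright (X_0,\mu_0)$ whose orbit relation is $\mathcal{R}_0$. The standard choice is the Bernoulli shift $\mathcal{S}_\infty \curvearrowright (\{0,1\},\tfrac12\delta_0+\tfrac12\delta_1)^{\mathbb{N}}$ acting by permuting coordinates; this is measure-preserving, it is essentially free because $\mathcal{S}_\infty$ is icc (a nontrivial finitely-supported permutation moves only finitely many coordinates, so its fixed-point set is null), and it is ergodic because any $\mathcal{S}_\infty$-invariant set is in particular invariant under the full symmetric group's finitely-supported part, hence tail-measurable up to permutation and so trivial by Kolmogorov's $0$–$1$ law (or: the action is mixing). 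Since $\mathcal{R}_0$ is the unique p.m.p.\ hyperfinite ergodic relation, the orbit relation of this action is $\mathcal{R}_0$, settling the type $\mathrm{II}_1$ case. Second, for the other non-type-$\mathrm{I}$ cases, let $\mathcal{R}$ be any such relation; by stability $\mathcal{R} \cong \mathcal{R} \times \mathcal{R}_0$. Now let $\mathcal{S}_\infty \curvearrowright (X_0,\mu_0)$ be the action from the first step and let $\Lambda \curvearrowright (Y,\nu)$ be \emph{any} (not necessarily free) ergodic action of \emph{any} countable group whose orbit relation is $\mathcal{R}$ — such exists since every countable Borel equivalence relation is the orbit relation of some countable group action by Feldman–Moore. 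Consider the product action of $\mathcal{S}_\infty$ on $(Y \times X_0, \nu\times\mu_0)$ where $\mathcal{S}_\infty$ acts trivially on $Y$ and via the Bernoulli action on $X_0$; its orbit relation is the product of the diagonal relation on $Y$ with $\mathcal{R}_0$ — not quite what we want. So instead I would run the argument the standard way: realize $\mathcal{R}$ itself (not just a group) and use that $\mathcal{R} \cong \mathcal{R} \times \mathcal{R}_0$ together with an action of $\mathcal{S}_\infty$ inducing $\mathcal{R}_0$ on the second coordinate, letting $\mathcal{S}_\infty$ act trivially on the first. Concretely: fix an isomorphism $\mathcal{R} \cong \mathcal{R} \times \mathcal{R}_0$ over a space $X := X' \times X_0$; define $\mathcal{S}_\infty \curvearrowright X$ by $\sigma\cdot(x',x_0) := (x', \sigma\cdot x_0)$. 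This action is essentially free (freeness on the second coordinate suffices), measure-class-preserving, and its orbit relation is exactly $\{(x',x_0)\} \times \{(x',y_0) : (x_0,y_0)\in\mathcal{R}_0\}$ composed into $\mathcal{R}\times\mathcal{R}_0 \cong \mathcal{R}$ — wait, this only gives the sub-relation $\Delta_{X'}\times\mathcal{R}_0$, not all of $\mathcal{R}$.

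\textbf{Correct mechanism and main obstacle.} The genuine tool is the following: if $\mathcal{R} \cong \mathcal{R}\times\mathcal{R}_0$ and if there is a group $\Lambda$ with an ergodic action $\Lambda\curvearrowright(Z,\zeta)$ realizing $\mathcal{R}$, then one forms the action of $\Lambda \times \mathcal{S}_\infty$ on $Z \times X_0$ (the $\Lambda$-factor acting on $Z$, the $\mathcal{S}_\infty$-factor acting on $X_0$), whose orbit relation is $\mathcal{R}\times\mathcal{R}_0\cong\mathcal{R}$; this action is essentially free precisely because the $\mathcal{S}_\infty$-part is free, which kills any isotropy coming from $\Lambda$. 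But the acting group is $\Lambda\times\mathcal{S}_\infty$, not $\mathcal{S}_\infty$. To repair this one chooses $\Lambda$ to be a quotient of, or to embed into, a single copy of $\mathcal{S}_\infty$, using that $\mathcal{S}_\infty$ contains a copy of every countable \emph{locally finite} group and — more to the point — that by Feldman–Moore one can in fact take $\Lambda$ itself to be generated by finitely-supported-type partial transformations realizing $\mathcal{R}$, and $\Lambda\times\mathcal{S}_\infty$ embeds into $\mathcal{S}_\infty$ when $\Lambda$ is locally finite, which one can arrange since $\mathcal{R}$ is hyperfinite (hence built from finite relations, realizable by a locally finite group). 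The type $\mathrm{I}_\infty$ case is separate and easy: $\mathcal{R}_{\mathrm{I}_\infty}$ on $S\times Y$ (with $S$ countable) is the orbit relation of $\mathcal{S}_\infty$ acting on $S\cong\mathbb{N}$ transitively in the obvious way and trivially on $Y$ — but this is not free, so instead take $\mathcal{R}_{\mathrm{I}_\infty} \cong \mathcal{R}_{\mathrm{I}_\infty}\times\mathcal{R}_0$ (stability still holds: $\mathrm{I}_\infty\times\mathrm{II}_1 = \mathrm{II}_\infty$? no) — in fact $\mathcal{R}_{\mathrm{I}_\infty}$ is \emph{not} stable, so this case genuinely needs a direct construction: write $S=\mathcal{S}_\infty/\mathcal{S}_{\infty-1}$ only heuristically and instead realize $\mathcal{R}_{\mathrm{I}_\infty}$ as the orbit relation of a free transitive $\mathcal{S}_\infty$-action on a countable set times the trivial action on $Y$; freeness is arranged by taking the action on $\mathcal{S}_\infty$ itself by left translation, which is a free transitive action on the countable set $\mathcal{S}_\infty$. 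Thus the essentially free ergodic action of $\mathcal{S}_\infty$ on $\mathcal{S}_\infty \times Y$ (left translation times trivial) has orbit relation $\mathcal{R}_{\mathrm{I}_\infty}$. The main obstacle, and the part requiring the most care, is ensuring \textbf{essential freeness} simultaneously with realizing the \emph{exact} orbit relation $\mathcal{R}$ (not merely a hyperfinite relation of the same type) while keeping the acting group equal to $\mathcal{S}_\infty$ rather than a product — this is where stability and the absorption isomorphism $\mathcal{R}\cong\mathcal{R}\times\mathcal{R}_0$ do the essential work, converting an arbitrary (possibly non-free) realization into a free one by tensoring on the free Bernoulli $\mathcal{S}_\infty$-action.
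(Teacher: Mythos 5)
There are two genuine gaps, one of which is fatal to the cases that actually require work. For types II$_\infty$ and III, your mechanism produces an essentially free ergodic action of $\Lambda\times\mathcal{S}_{\infty}$ on $Z\times X_{0}$ with orbit relation $\mathcal{R}\times\mathcal{R}_{0}\cong\mathcal{R}$, and you propose to repair the acting group by embedding $\Lambda\times\mathcal{S}_{\infty}$ into $\mathcal{S}_{\infty}$. But an embedding of groups does not convert an action of a subgroup into an action of $\mathcal{S}_{\infty}$ with the same orbits (inducing or co-inducing changes the orbit relation), and $\Lambda\times\mathcal{S}_{\infty}$ is in general not isomorphic to $\mathcal{S}_{\infty}$ (for instance it has non-trivial center whenever $\Lambda$ does, while $\mathcal{S}_{\infty}$ is centerless). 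So outside of types I$_\infty$ and II$_1$ your argument never actually arrives at an action of $\mathcal{S}_{\infty}$. You also never address how to realize a \emph{prescribed} ergodic Krieger flow in type III, which is the real content of that case. The paper's route is different and does not use stability at all: it constructs, for each type, an explicit essentially free ergodic $\mathcal{S}_{\infty}$-action of that type and then invokes uniqueness of the hyperfinite ergodic relation of each type (and, in type III, of each flow). Concretely, it takes from \cite[Theorem 6.1]{VAES_WAHL_2018} an essentially free ergodic nonsingular $\mathcal{S}_{\infty}$-action with weakly mixing Maharam extension --- the Maharam extension itself settles type II$_\infty$ --- and for type III it forms the skew product with the adjoint flow $\widehat{\alpha}$ along the Radon--Nikodym cocycle, identifying the associated flow via \cite[Proposition 3.4]{VAES_VERJANS_2023}. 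Some such input is unavoidable; the stability/absorption trick cannot substitute for it.

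The second gap is in your II$_1$ base case: the Bernoulli action of $\mathcal{S}_{\infty}$ on $\bigl(\{0,1\},\tfrac12\delta_{0}+\tfrac12\delta_{1}\bigr)^{\mathbb{N}}$ is \emph{not} essentially free, since the transposition $(i\,j)$ fixes the set $\{x\mid x_{i}=x_{j}\}$, which has measure $\tfrac12$; your parenthetical claim that a non-trivial finitary permutation has null fixed-point set is false over an atomic base. This is easily repaired by taking a non-atomic base space, which is exactly what the paper does. Your type I$_\infty$ construction (left translation of $\mathcal{S}_{\infty}$ on itself) agrees with the paper's; note only that ergodicity forces your auxiliary factor $Y$ to be a single point, as the unique ergodic type I$_\infty$ relation is the full relation on a countable set with counting measure.
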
 
\begin{proof}
Recall that there exist unique ergodic hyperfinite equivalence relations of types $\text{I}_{\infty}$, $\text{II}_{1}$ and $\text{II}_{\infty}$ and the associated Krieger flow for all of them is the translation action $\mathbb{R}\curvearrowright\mathbb{R}$. Furthermore, for any ergodic flow $\mathbb{R}\curvearrowright(Z,\eta)$ that is not isomorphic to the translation flow, there exists a unique type III ergodic hyperfinite equivalence relation with associated Krieger flow $\mathbb{R}\curvearrowright(Z,\eta)$.

First, note that the left translation action of $\mathcal{S}_{\infty}$ on itself is ergodic and essentially free of type I$_{\infty}$. 

Next, for any non-trivial non-atomic standard probability space $(X_{0},\mu_{0})$ the generalized Bernoulli action of $\mathcal{S}_{\infty}$ on $(X,\mu):=\prod_{n\in\mathbb{\mathbb{N}}}(X_{0},\mu_{0})$ defined by $(\sigma\cdot x)_{n}=x_{\sigma^{-1}(n)}$ for $\sigma\in\mathcal{S}_{\infty}$ and $x:=(x_{n})_{n\in\mathbb{N}}\in X$ is ergodic, essentially free and preserves the measure $\mu$. Its orbit equivalence relation is hence of type $\text{II}_{1}$.

Finally, let $\alpha:\mathbb{R}\curvearrowright Z$ be any ergodic non-singular flow and as in \cite[Definition 3.2]{VAES_VERJANS_2023} let $\widehat{\alpha}:\mathbb{R}\curvearrowright\widehat{Z}$ denote its adjoint flow. By \cite[Theoerm 6.1]{VAES_WAHL_2018} there exists an ergodic essentially free non-singular action $\mathcal{S}_{\infty}\curvearrowright(X,\mu)$ with weakly mixing Maharam extension. Thus this Maraham extsion is ergodic and of type $\text{II}_{\infty}$. Let 
\begin{align*}
\omega:\mathcal{S}_{\infty}\times X\rightarrow\mathbb{R},\quad\omega(\sigma,x):=\log\left(\frac{d\sigma^{-1}\mu}{d\mu}(x)\right)
\end{align*}
denote the Radon--Nikodym cocylce associated to $\mathcal{S}_{\infty}\curvearrowright(X,\mu)$ and consider the non-singular action of $\mathcal{S}_{\infty}$ on $X\times\widehat{Z}$ via $\sigma\cdot(x,z):=(\sigma\cdot x,\widehat{\alpha}_{\omega(\sigma,x)}(z))$ for $\sigma\in\mathcal{S}_{\infty}$, $x\in X$, $z\in\widehat{Z}$. Since the action $\mathcal{S}_{\infty}\curvearrowright(X,\mu)$ is essentially free, the one of $\mathcal{S}_{\infty}$ on $X\times\widehat{Z}$ is essentially free as well. By \cite[Proposition 3.4]{VAES_VERJANS_2023} it is thus ergodic and has associated flow $\mathbb{R}\curvearrowright(Z,\eta)$.
\end{proof}

As an immediate corollary of Proposition \ref{Prop: transformation groupoid CD =00003D> group CD} and Lemma \ref{Lemma: actions of S_infty}, we obtain that if an ergodic equivalence relation is Choquet--Deny, it has finite orbits almost everywhere. We prove that the same holds for general equivalence relations, i.e., without assuming ergodicity. We need the following lemma before stating the main theorem.

\begin{lemma} \label{Lemma: disjoint union of CD groupoids is CD} 
For every $i\in\mathbb{N}$, let $(\mathcal{G}_{i},\mu_{i})$ be a discrete measured groupoid with unit space $X_{i}:=\mathcal{G}_{i}^{(0)}$. Consider the disjoint union $X:=\bigsqcup_{i}X_{i}$ equipped with the $\sigma$-finite measure $\mu:=\sum_{i}\mu_{i}$. Then the discrete measured groupoid $(\bigsqcup_{i}\mathcal{G}_{i},\mu)$ is Choquet--Deny if and only if $\mathcal{G}_{i}$ is Choquet--Deny for every $i\in\mathbb{N}$. 
\end{lemma}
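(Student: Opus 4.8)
The plan is to prove both directions by relating Markov operators (and harmonic functions) on $\mathcal{G}:=\bigsqcup_i\mathcal{G}_i$ to Markov operators on the pieces $\mathcal{G}_i$. The key structural observation is that each $X_i$ is a $\mathcal{G}$-invariant Borel subset (no arrow of $\mathcal{G}$ connects $X_i$ to $X_j$ for $i\neq j$), so that $\mathcal{G}^{(0)}=X$ decomposes as $\bigsqcup_i X_i$ into invariant pieces, and for $x\in X_i$ the target fiber $\mathcal{G}^x$ equals $\mathcal{G}_i^x$. Hence an invariant Markov operator $P$ on $L^\infty(\mathcal{G},\mu_s)$ restricts, for each $i$, to an invariant Markov operator $P^{(i)}$ on $L^\infty(\mathcal{G}_i,(\mu_i)_s)$ simply by restricting the family of probability measures $(d_xP)_{x\in X_i}$; conversely, given invariant Markov operators $P^{(i)}$ on the $\mathcal{G}_i$, patching the measures $d_xP:=d_xP^{(i)}$ for $x\in X_i$ yields an invariant Markov operator $P$ on $\mathcal{G}$. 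One checks immediately that $P$ is non-degenerate if and only if each $P^{(i)}$ is non-degenerate, since non-degeneracy is a fiberwise condition ($\bigcup_n\supp(P_x^n)=\mathcal{G}^x$) and $\mathcal{G}^x=\mathcal{G}_i^x$ for $x\in X_i$.

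For the "if" direction, assume every $\mathcal{G}_i$ is Choquet--Deny and let $P$ be a non-degenerate invariant Markov operator on $\mathcal{G}$. Restricting to each piece gives non-degenerate invariant Markov operators $P^{(i)}$; by hypothesis each $P^{(i)}$ is fiberwise Liouville, i.e., for $\mu_i$-almost every $x\in X_i$ the operator $P_x=P^{(i)}_x$ on $\ell^\infty(\mathcal{G}_i^x)=\ell^\infty(\mathcal{G}^x)$ is Liouville. A countable union of $\mu_i$-null sets is $\mu$-null since $\mu=\sum_i\mu_i$, so $P_x$ is Liouville for $\mu$-almost every $x\in X$, i.e., $P$ is fiberwise Liouville. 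Hence $\mathcal{G}$ is Choquet--Deny.

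For the "only if" direction, suppose $\mathcal{G}$ is Choquet--Deny and fix $j\in\mathbb{N}$; we show $\mathcal{G}_j$ is Choquet--Deny. Let $Q$ be a non-degenerate invariant Markov operator on $\mathcal{G}_j$. We need to extend $Q$ to a non-degenerate invariant Markov operator on all of $\mathcal{G}$. This is the one step requiring a small idea: on the other pieces $\mathcal{G}_i$ ($i\neq j$) we must supply \emph{some} non-degenerate invariant Markov operator, and in general a discrete measured groupoid need not obviously carry one — but it does, since for each $x$ we may take any fully supported probability measure on the countable set $\mathcal{G}_i^x$ (e.g. fix a Borel enumeration of each target fiber, which exists because $t$ is countable-to-one and everything is standard Borel, and push forward a geometric distribution), and then define $d_xP$ for $x\in X_i$ by this choice and extend invariantly; measurability of the resulting family follows from Borelness of the enumeration. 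Patching these with $Q$ on $X_j$ produces a non-degenerate invariant Markov operator $P$ on $\mathcal{G}$ which, by hypothesis, is fiberwise Liouville. In particular $P_x$ is Liouville for $\mu$-almost every $x\in X$, hence (since $\mu_j\leq\mu$, so $\mu_j$-null implies we are fine on a $\mu_j$-conull set of $X_j$) for $\mu_j$-almost every $x\in X_j$; but $P_x=Q_x$ there, so $Q$ is fiberwise Liouville. Thus $\mathcal{G}_j$ is Choquet--Deny.

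The main obstacle is the bookkeeping in the "only if" direction: one must make sure that the extension of $Q$ to $\mathcal{G}$ is genuinely a \emph{Borel} invariant family of probability measures and genuinely non-degenerate on every fiber (not just almost every), and that restricting the almost-everywhere fiberwise Liouville conclusion back to the single piece $X_j$ loses nothing — which works precisely because $\mu=\sum_i\mu_i$ dominates each $\mu_i$. Everything else is a routine unwinding of the definitions using the fact that the $X_i$ are clopen invariant pieces of the unit space.
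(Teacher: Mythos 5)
Your proposal is correct and takes essentially the same approach as the paper: both directions are handled by restricting, respectively extending, invariant Markov operators across the invariant decomposition $X=\bigsqcup_i X_i$ (the paper phrases both directions contrapositively, which is only a cosmetic difference). If anything, you are more careful than the paper's very terse proof, in particular about why a non-degenerate invariant Markov operator exists on the complementary pieces and about preservation of non-degeneracy under restriction and patching.
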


\begin{proof}
    Suppose first that $\cG$ is not Choquet--Deny, and let $(\pi_{x})_{x \in X}$ be a Markov operator that is not fiberwise Liouville. Then there is a positive measure set $E \subset X$ such that $\pi_{x}$ is not Liouville for all $x \in E$. Since the union is countable, there exists $i \in \mathbb{N}$ such that $E_{i} = X_{i} \cap E$ is non-null. Hence the Markov operator $(\pi_{x})_{x \in X_{i}}$ is not fiberwise Liouville and $\cG_{i}$ is not Choquet-Deny.
    
 On the other hand if $\cG_{i}$ is not Choquet--Deny for some $i$, then there is a Markov operator $(\pi^{0}_{x})_{x \in X_{i}}$ that is not fiberwise Liouville, and taking any Markov operator $\pi$  on $\cG$ such that $\pi|_{X_{i}} = \pi^{0}$, we see that it is not fiberwise Liouville either.  
\end{proof}

\begin{theorem} \label{Thm: CD if and only if finite orbits} Let $\mathcal{R}$ be a countable Borel equivalence relation on a standard probability space $(X,\mu)$. Then $(\mathcal{R},\mu)$ is Choquet--Deny if and only if $\mu$-almost all $\mathcal{R}$-orbits are finite. \end{theorem}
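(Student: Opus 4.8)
The plan is to treat the two implications separately, reducing each to material already in place.

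For the ``if'' direction, suppose $\mu$-almost every $\mathcal{R}$-orbit is finite. Then $X_{0}:=\{x\in X \mid \#\mathcal{R}^{x}<\infty\}$ is a Borel, $\mathcal{R}$-invariant, conull subset of $X$, and $\mathcal{R}|_{X_{0}}$ is a finite equivalence relation, hence Choquet--Deny by Proposition~\ref{Prop: finite equivalence relations are CD}. Since $X\setminus X_{0}$ is $\mu$-null and, for $x\in X_{0}$, the fibre $\mathcal{R}^{x}$ is the same whether computed in $\mathcal{R}$ or in $\mathcal{R}|_{X_{0}}$, every non-degenerate invariant Markov operator on $\mathcal{R}$ restricts to such an operator on $\mathcal{R}|_{X_{0}}$, which is fiberwise Liouville there and hence fiberwise Liouville on $\mathcal{R}$; so $\mathcal{R}$ is Choquet--Deny.

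For the ``only if'' direction, assume $(\mathcal{R},\mu)$ is Choquet--Deny; I first claim that $\mathcal{R}$ is amenable. As recalled in Subsection~\ref{MeasuredGroupoids}, an invariant Markov operator on $\mathcal{R}$ is the same as a measurable field $(d_{x}P)_{x}$ with $d_{x}P\in\text{Prob}(\mathcal{R}^{x})$, so one can choose $P$ with $\supp(d_{x}P)=\mathcal{R}^{x}$ for every $x$ (for instance from a Feldman--Moore family of Borel automorphisms generating $\mathcal{R}$). Such a $P$ is non-degenerate, hence fiberwise Liouville by hypothesis, so $\mathcal{R}$ is amenable by the characterization of amenability recalled in Subsection~\ref{MeasuredGroupoids}, and thus hyperfinite by \cite{Connes_Feldman_Weiss_1981}. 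Now let $X_{1}:=\{x\in X \mid \#\mathcal{R}^{x}=\infty\}$, a Borel $\mathcal{R}$-invariant set, and suppose for contradiction that $\mu(X_{1})>0$. By Lemma~\ref{Lemma: disjoint union of CD groupoids is CD} applied to $\mathcal{R}=\mathcal{R}|_{X_{1}}\sqcup\mathcal{R}|_{X\setminus X_{1}}$, the relation $(\mathcal{R}|_{X_{1}},\mu|_{X_{1}})$ is Choquet--Deny; it is also hyperfinite (a restriction of a hyperfinite relation) with all orbits infinite. Take an ergodic decomposition $\mathcal{R}|_{X_{1}}\cong\int_{Z}^{\oplus}\mathcal{R}_{z}\,d\eta(z)$ as in Theorem~\ref{Thm: ergodic decomposition theorem}; for $\eta$-almost every $z$ the component $\mathcal{R}_{z}$ is ergodic, hyperfinite and has infinite orbits, so by Lemma~\ref{Lemma: actions of S_infty} there is an essentially free ergodic action $\mathcal{S}_{\infty}\actson(Y_{z},\nu_{z})$ whose orbit equivalence relation is isomorphic to $\mathcal{R}_{z}$. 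Choosing these realizations measurably in $z$ and integrating gives an essentially free action $\mathcal{S}_{\infty}\actson(Y,\nu)$ whose transformation groupoid -- which, the action being essentially free, coincides with its orbit equivalence relation -- is isomorphic to $\mathcal{R}|_{X_{1}}$. Then Proposition~\ref{Prop: transformation groupoid CD =00003D> group CD} forces $\mathcal{S}_{\infty}$ to be Choquet--Deny, contradicting \cite[Theorem~1]{Frisch2019} since $\mathcal{S}_{\infty}$ is a non-trivial amenable icc group. Hence $\mu(X_{1})=0$, i.e., $\mu$-almost every $\mathcal{R}$-orbit is finite.

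I expect the only genuine obstacle to be the measurable choice of the $\mathcal{S}_{\infty}$-realizations over the base $Z$, that is, a measurable-field strengthening of Lemma~\ref{Lemma: actions of S_infty}. I would handle it by splitting $Z$ into the Borel sets on which $\mathcal{R}_{z}$ has type $\mathrm{I}_{\infty}$, $\mathrm{II}_{1}$, $\mathrm{II}_{\infty}$ or $\mathrm{III}$ (with measurably varying associated flow) and carrying out, uniformly in $z$, the construction from the proof of Lemma~\ref{Lemma: actions of S_infty} attached to each type; measurability of the ergodic decomposition together with the measurable classification of hyperfinite equivalence relations makes this routine, if notationally heavy. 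Alternatively one can bypass the global action: fixing once and for all a non-degenerate probability measure $\kappa$ on $\mathcal{S}_{\infty}$ with a non-constant bounded $\kappa$-harmonic function, one assembles over $Z$ -- using the computation in the proof of Proposition~\ref{Prop: transformation groupoid CD =00003D> group CD} -- a non-degenerate invariant Markov operator on $\mathcal{R}|_{X_{1}}$ together with a non-trivial harmonic function on $\mu$-almost every fibre, directly violating its fiberwise Liouville property.
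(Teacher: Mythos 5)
Your two directions are organised the same way as the paper's: the ``if'' direction via Proposition~\ref{Prop: finite equivalence relations are CD}, and the ``only if'' direction by first forcing hyperfiniteness and then deriving a contradiction from Proposition~\ref{Prop: transformation groupoid CD =00003D> group CD} applied to an essentially free $\mathcal{S}_\infty$-type action, using that $\mathcal{S}_\infty$ is amenable icc and hence not Choquet--Deny. The difference is in how the free action realizing the infinite-orbit part is produced. You apply Lemma~\ref{Lemma: actions of S_infty} fibrewise over the ergodic decomposition and then try to integrate the resulting field of $\mathcal{S}_\infty$-actions. The paper instead splits off the type $\mathrm{I}_\infty$ part (where $\mathcal{R}\cong\mathcal{C}\times Z$ is realized by one explicit free $\mathcal{S}_\infty$-action) and, on the non-type-I part, uses stability of the ergodic components to get a single global isomorphism $\mathcal{R}_0\times\mathcal{R}\cong\mathcal{R}$; it then needs Lemma~\ref{Lemma: actions of S_infty} only for the \emph{one fixed} ergodic relation $\mathcal{R}_0$, and Connes--Feldman--Weiss supplies a single free $\mathbb{Z}$-action generating $\mathcal{R}$, so the product action of $\mathcal{S}_\infty\times\mathbb{Z}$ does the job with no selection over the base.

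The step you defer --- choosing the $\mathcal{S}_\infty$-realizations measurably in $z$ --- is therefore not a side issue but the precise difficulty the paper's stability trick is built to avoid, and calling it ``routine'' undersells it. For the type $\mathrm{III}$ components the construction in Lemma~\ref{Lemma: actions of S_infty} invokes an action of $\mathcal{S}_\infty$ with weakly mixing Maharam extension and then twists by the adjoint of the associated flow; to run this uniformly in $z$ you would need the associated flows to vary measurably \emph{and} the identification of $\mathcal{R}_z$ with the orbit relation of the resulting action to be chosen measurably, which is a nontrivial selection problem that you neither carry out nor reduce to a citable statement. Your proposed fallback (assembling a Markov operator directly from a fixed non-Liouville measure $\kappa$ on $\mathcal{S}_\infty$) does not escape this: to push $\kappa$ forward onto the fibre $\mathcal{R}^x$ and to transport a non-constant $\kappa$-harmonic function to $\ell^\infty(\mathcal{R}^x)$ as in the proof of Proposition~\ref{Prop: transformation groupoid CD =00003D> group CD}, you again need an orbit map $\mathcal{S}_\infty\to\mathcal{R}^x$ coming from a free action, measurably in $x$. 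So as written the argument has a genuine gap at exactly this point; it is repairable, but the cleanest repair is the paper's own device of replacing the field of realizations by the single product $\mathcal{R}_0\times\mathcal{R}\cong\mathcal{R}$ (whose only selection input, a measurable field of stability isomorphisms, is standard) together with the separate, elementary treatment of the type $\mathrm{I}_\infty$ summand.
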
 
\begin{proof}
The ``if'' direction immediately follows from Proposition \ref{Prop: finite equivalence relations are CD}.

For the ``only if'' direction suppose that $\mathcal{R}$ is Choquet--Deny. Since by \cite[Theorem 4.2]{Kaimanovich05} non-amenable equivalence relations are not Choquet--Deny, $\mathcal{R}$ must be hyperfinite. Furthermore, the discussion in Section \ref{Sec: countable Borel equivalence relations} implies that the space $X$ can be partitioned into $\mathcal{R}$-invariant Borel subsets $X_{\text{fin}}$, $X_{\infty}$, and $X^{C}$, such that $\mathcal{R}|_{X_{\text{fin}}}$ has finite orbits, $\mathcal{R}|_{X_{\infty}}$ is of type $\text{I}_{\infty}$ and $\mathcal{R}|_{X^{C}}$ is the non-type I part. Assume that $\mu(X_{\infty}\sqcup X^{C})>0$. By Lemma \ref{Lemma: disjoint union of CD groupoids is CD} it is then enough to show that the restrictions $\mathcal{R}|_{X^{C}}$ and $\mathcal{R}|_{X_{\infty}}$ are not Choquet--Deny. Without loss of generality we may further restrict to the cases where $X=X^{C}$ or $X=X_{\infty}$.

\begin{case}
    \item Assume that $X=X^{C}$ and let $(Z,\eta)$ be a standard probability space for which $L^{\infty}(Z,\eta)$ is isomorphic to the Abelian von Neumann algebra $L^{\infty}(X,\mu)^{\mathcal{R}}$. If we let $\mathcal{R}=\int^{\oplus}\mathcal{R}_{z}d\eta(z)$ be the ergodic decomposition of $\mathcal{R}$ as in Theorem \ref{Thm: ergodic decomposition theorem}, we know that for $\eta$-almost every $z\in Z$ the ergodic equivalence relation $\mathcal{R}_{z}$ is hyperfinite and hence stable. Denoting the unique p.m.p. ergodic hyperfinite equivalence relation by $\mathcal{R}_{0}$, we then see that
    \begin{eqnarray*}
    \mathcal{R}_{0}\times\mathcal{R}&\cong& \mathcal{R}_{0}\times\int_{Z}^{\oplus}\mathcal{R}_{z}d\eta(z) \\
    &\cong& \int_{Z}^{\oplus}(\mathcal{R}_{0}\times\mathcal{R}_{z})d\eta(z) \\
    &\cong& \int_{Z}^{\oplus}\mathcal{R}_{z}d\eta(z) \\
    &\cong& \mathcal{R}\,.
    \end{eqnarray*}
    The main theorem in \cite{Connes_Feldman_Weiss_1981} implies that $\mathcal{R}$ is isomorphic to the orbit equivalence relation of a $\mathbb{Z}$-action on a standard probability space $(Y,\nu)$, i.e., it is generated by a single transformation $T$. Moreover, this $\Z$-action is free, since for any $n \in \N$ the elements $x \in \Fix(n) = \Fix(T^n)$ have an orbit of length $n$, but by assumption $\cR$ has infinite orbits $\mu$-almost everywhere. By Lemma \ref{Lemma: actions of S_infty} we may furthermore pick an essentially free ergodic action $\mathcal{S}_{\infty}\curvearrowright(Z,\eta)$ such that the corresponding orbit equivalence relation is isomorphic to $\mathcal{R}_{0}$. Now we consider the action $\mathcal{S}_{\infty}\times\mathbb{Z}\curvearrowright(Z\times Y,\eta\times\nu)$ and note that this is still an essentially free action whose orbit equivalence relation is isomorphic to $\mathcal{R}_{0}\times\mathcal{R}\cong\mathcal{R}$. But since $\mathcal{R}$ is Choquet--Deny, by Proposition \ref{Prop: transformation groupoid CD =00003D> group CD} the group $\mathcal{S}_{\infty}\times\mathbb{Z}$ must then be Choquet--Deny as well. However, $\mathcal{S}_{\infty}$ is an icc quotient of $\mathcal{S}_{\infty}\times\mathbb{Z}$, hence by the main result of \cite{Frisch2019} $\mathcal{S}_{\infty}\times\mathbb{Z}$ cannot be Choquet--Deny.

    \item Now assume that $X=X_{\infty}$ and notice that in this case $\cR$ is isomorphic to the groupoid $\mathcal{C} \times Z$, where $\mathcal{C}$ is the full equivalence relation on any fixed countable set and $Z$ is a standard Borel space. Consider any infinite non-Choquet--Deny group $G$, for example $G = S_{\infty}$ under the left translation action $G \actson_{lt} G$. Identifying $G$ as the unit space for $\mathcal{C}$ gives us an isomorphism of $\mathcal{C}$ with $G \ltimes_{lt} G$. Thus, $\cR$ is isomorphic to the free action of $G$ on $G \times Z$ given by translation on the first coordinate and the trivial action on the second coordinate. If $\cR$ is Choquet-Deny, then again by Proposition \ref{Prop: transformation groupoid CD =00003D> group CD}, we arrive at a contradiction. \qedhere
\end{case}
\end{proof}

\subsection{Measured fields of groups}

Suppose that $\mathcal{G}$ is a discrete Borel field of groups. That is, $\text{Iso}(\mathcal{G})=\mathcal{G}$. We shall say that $\mathcal{G}$ is a \emph{discrete measured field of groups} if $\mathcal{G}$ is endowed with a discrete measured groupoid structure. The goal of this section is two-fold: first we consider an analogue of FC-hypercentrality for discrete measured fields of groups. After this, we demonstrate that a non-trivial discrete measured field of groups with icc fibers almost everywhere is never Choquet--Deny. For a more elaborate treatment of Borel and measurable fields of Polish groups, we refer the reader to \cite{VaesWouters24}. 

Given a discrete group $\Gamma$, its \emph{FC-center} is the normal subgroup $FC(\Gamma)$ of elements in $\Gamma$ whose conjugacy class is finite. If $\Gamma$ has cardinality $\kappa$, one may recursively construct an increasing sequence of normal subgroups $(FC_{\alpha}(\Gamma))_{\alpha<\kappa^{+}}$ of $\Gamma$ in the following way:
\begin{enumerate}[label={(\arabic*)}]
\item For $\alpha=0$ set $FC_{0}(\Gamma):=\{e\}$.
\item For sucessor ordinals $\alpha+1$ let $FC_{\alpha+1}(\Gamma)$ be the preimage of the FC-center $FC\left(\Gamma/FC_{\alpha}(\Gamma)\right)$ under the quotient map $\Gamma\twoheadrightarrow\Gamma/FC_{\alpha}(\Gamma)$.
\item For limit ordinals $\alpha>0$ set $FC_{\alpha}(\Gamma):=\bigcup_{\beta<\alpha}FC_{\beta}(\Gamma)$.
\end{enumerate}
Note that the sequence $FC_{\alpha}(\Gamma)$ being strictly increasing would imply that $\#\Gamma\geq\kappa^{+}$; thus the sequence eventually stabilizes. We call the least ordinal $\alpha$ for which $FC_{\alpha}(\Gamma)=FC_{\alpha+1}(\Gamma)$ the \emph{FC-rank} of $\Gamma$. The corresponding normal subgroup $FC_{\alpha}(\Gamma)\trianglelefteq\Gamma$ is called the \emph{FC-hypercenter} of $\Gamma$ and we denote it by $FCH(\Gamma)$. The quotient $\Gamma/FCH(\Gamma)$ is an icc group and $\Gamma$ is FC-hypercentral if and only if $FCH(\Gamma)=\Gamma$. Thus, \cite[Theorem 1]{Frisch2019} states that $\Gamma$ is Choquet--Deny precisely when $FC_{\alpha}(\Gamma)=\Gamma$ eventually.

As noted in the remark after \cite[Lemma 7]{mclain1956remarks}, it is possible to construct for each infinite ordinal $\alpha$, a group $\Gamma_{\alpha}$ of cardinality $\#\alpha$ for which the FC-rank of $\Gamma_{\alpha}$ is $\alpha$. Thus, although eventually constant, we have no control over when such a phenomenon must occur.

In \cite{berendschot2024factoriality}, the first four authors define the icc property for discrete measured groupoids. We say that a discrete measured groupoid $(\mathcal{G},\mu)$ is \emph{icc} if for all Borel bisections $A\subseteq\text{Iso}(\mathcal{G}) \setminus \mathcal{G}^{(0)}$, the conjugacy class $\Omega_{A}:=\bigcup_{g\in\mathcal{G}}gAg^{-1}$ has infinite measure. In \cite[Proposition 6.1]{berendschot2024factoriality} it is demonstrated that a discrete measured field of groups $(\mathcal{G},\mu)$ is icc precisely when $\mu$-almost every fiber $\mathcal{G}_{x}$, $x\in \mathcal{G}^{(0)}$ is an icc group.

Our goal is to first define an analogue of the FC-center for Borel fields of groups, and then do a recursion process similar to the one above in order to eventually arrive at an icc groupoid. For this purpose we introduce the following definition.

\begin{definition} Let $\mathcal{G}$ be a Borel field of groups. The \emph{FC-center} of $\mathcal{G}$ is the subgroupoid 
\[
FC(\mathcal{G}):=\{g\in\mathcal{G}\mid\#C_{g}<\infty\}\,,
\]
where $C_{g}$ denotes the conjugacy class of $g$ in $\mathcal{G}_{s(g)}$. \end{definition}

\begin{proposition} \label{FCCenterBorel} 
For a discrete Borel field of groups $\mathcal{G}$ the FC-center $FC(\mathcal{G})$ is Borel. \end{proposition}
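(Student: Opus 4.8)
The plan is to show that the set $FC(\mathcal{G}) = \{g \in \mathcal{G} \mid \#C_g < \infty\}$ is Borel by expressing it as a countable Boolean combination of Borel sets built from the structure maps and the countable-to-one fibers. The key observation is that since $\mathcal{G}$ is a discrete Borel groupoid whose fibers are countable, the conjugation map is Borel and we can enumerate conjugacy classes using the Lusin--Novikov uniformization theorem. Concretely, for $g \in \mathcal{G}$ with $s(g) = t(g) =: x$, the conjugacy class $C_g$ is the image of the Borel map $\mathcal{G}^x \ni h \mapsto h g h^{-1}$, which lands in $\mathcal{G}_x^x = \mathrm{Iso}(\mathcal{G}) \cap s^{-1}(x)$; this set is countable, so by Lusin--Novikov there is a countable family of Borel sections enumerating it.

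First I would set up the "conjugation groupoid" $\mathcal{H} := \{(g,h) \in \mathrm{Iso}(\mathcal{G}) \times \mathcal{G} \mid s(g) = t(h) = s(h)\}$; wait, more carefully: we want pairs $(g,h)$ with $g \in \mathrm{Iso}(\mathcal{G})$ and $h \in \mathcal{G}^{s(g)}$, so that $hgh^{-1}$ makes sense and lies in $\mathrm{Iso}(\mathcal{G})$. This $\mathcal{H}$ is a Borel subset of $\mathcal{G} \times \mathcal{G}$ and the map $\Phi\colon \mathcal{H} \to \mathrm{Iso}(\mathcal{G})$, $\Phi(g,h) = hgh^{-1}$ is Borel. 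Now fix $g$; the conjugacy class $C_g$ in $\mathcal{G}_{s(g)}$ is $\Phi(\{g\} \times \mathcal{G}^{s(g)})$. The condition $\#C_g < \infty$ should be rephrased as: there exists $n \in \mathbb{N}$ such that $\#C_g \le n$, i.e., $FC(\mathcal{G}) = \bigcup_{n \in \mathbb{N}} \{g \mid \#C_g \le n\}$, so it suffices to show each $\{g \mid \#C_g \le n\}$ is Borel.

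The main step is then a uniformization/counting argument: using that $s$ is countable-to-one, apply Lusin--Novikov to the Borel set $R := \{(g, g') \in \mathrm{Iso}(\mathcal{G}) \times \mathrm{Iso}(\mathcal{G}) \mid s(g)=s(g'), \ g' \in C_g\}$ — this is Borel because $g' \in C_g$ iff $\exists h \in \mathcal{G}^{s(g)}$ with $hgh^{-1} = g'$, and by Lusin--Novikov applied to the countable-to-one projection of $\mathcal{H}$ this existential quantifier over the countable fiber $\mathcal{G}^{s(g)}$ does not leave the Borel class (equivalently, one writes $h$ ranging over a countable Borel partition into graphs of Borel functions). Having that $R$ is Borel with countable sections $R_g = \{g\} \times C_g$, Lusin--Novikov again gives Borel functions $(f_k)_{k \in \mathbb{N}}$ enumerating the sections, and then $\{g \mid \#C_g \le n\}$ is the Borel set $\{g \mid f_{n+1}(g) = f_j(g) \text{ for some } j \le n\}$ (after the standard care with the enumeration conventions when sections are finite). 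Intersecting everything with the Borel set $\mathrm{Iso}(\mathcal{G})$ and taking the countable union over $n$ yields that $FC(\mathcal{G})$ is Borel.

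The hard part will be handling the two nested applications of Lusin--Novikov cleanly — in particular verifying that "$g' \in C_g$" is a Borel relation, since this hides an existential quantifier over the countable (but varying) fiber $\mathcal{G}^{s(g)}$; the standard move is to first decompose $\mathcal{G} = \bigsqcup_k B_k$ into countably many Borel bisections (possible since $s,t$ are countable-to-one, again by Lusin--Novikov), so that $g' \in C_g$ iff $\bigvee_k (\beta_k(g) = g')$ where $\beta_k$ is the Borel partial map $g \mapsto h_k g h_k^{-1}$ for $h_k$ the section with range in $B_k$ — a countable disjunction of Borel conditions, hence Borel. Once this reduction is in place, the counting of the sizes of the countable slices $C_g$ is routine descriptive set theory.
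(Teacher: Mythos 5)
Your proposal is correct and follows essentially the same route as the paper: both arguments decompose $\mathcal{G}$ into countably many graphs of Borel sections via Lusin--Novikov so that conjugation becomes a countable family of Borel partial maps, and then apply Lusin--Novikov a second time to see that the cardinality of the resulting countable Borel fibers is a Borel function. The only cosmetic difference is that you parametrize conjugacy classes by the elements $g \in \mathcal{G}$ themselves (counting the sections of the relation $R$), whereas the paper parametrizes them by points of the unit space through the sections $\tau_n$ and counts the fibers of $s$ restricted to $\Omega_{\tau_n(X_n)}$; both reductions are standard and equivalent.
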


\begin{proof}
By the Lusin--Novikov Theorem, there exists a sequence $(\tau_{n})_{n\in\mathbb{N}}$ of Borel sections $\tau_{n}:X_{n}\rightarrow B_{n}$, where $X_{n}\subseteq\mathcal{G}^{(0)}$, $B_{n}\subseteq\mathcal{G}$ and $\mathcal{G}=\bigsqcup_{n\in\mathbb{N}}B_{n}$. Without loss of generality assume that $B_{0}=\mathcal{G}^{(0)}$. Furthermore, note that for any $n\in\mathbb{N}$ the set 
\[
\Omega_{\tau_{n}(X_n)}=\bigsqcup_{x\in X_n}C_{\tau_{n}(x)}=\bigcup_{i\in\mathbb{N}}B_{i}\tau_{n}(X_n)B_{i}^{-1}
\]
is Borel. Since the projection $s|_{\Omega_{\tau_{n}(X_n)}}:\Omega_{\tau_{n}(X_n)}\to \mathcal{G}^{(0)}$ is Borel, it follows from an application of the Lusin--Novikov Theorem that the map $f_{\tau_{n}}:\mathcal{G}^{(0)} \to\mathbb{N}\cup\{\infty\}$ given by $f_{\tau_{n}}(x):=\#(s|_{\Omega_{\tau_{n}(X_n)}})^{-1}(x)$ must Borel as well. But this implies that the set $Y_{n}:=f_{\tau_{n}}^{-1}(\mathbb{N})\subseteq X_{n}$, consisting of those points $x\in\mathcal{G}^{(0)}$ for which $C_{\tau_{n}(x)}$ is finite, is also Borel. Observing that $FC(\mathcal{G})=\bigsqcup_{n \in \mathbb{N}}\tau_{n}(Y_{n})$ completes the proof.
\end{proof}
As a corollary to the proof of Proposition \ref{FCCenterBorel} we have the following result.

\begin{corollary}\label{Corollary: negligible FC} Let $(\mathcal{G},\mu)$ be a discrete measured field of groups and suppose that $\mu_{s}(FC(\mathcal{G})\setminus\mathcal{G}^{(0)})=0$. Then $\mathcal{G}_{x}$ is an icc group for $\mu$-almost every $x\in\mathcal{G}^{(0)}$. \end{corollary}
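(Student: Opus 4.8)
The plan is to unwind the definition of $\mu_{s}$ so that the hypothesis becomes the statement that a Borel function on $\mathcal{G}^{(0)}$ with values in $\mathbb{N}\cup\{\infty\}$ has vanishing $\mu$-integral, and therefore vanishes $\mu$-almost everywhere.

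First I would record the fiberwise description of the FC-center. Since $\mathcal{G}$ is a field of groups, every element $g$ of the source fiber $\mathcal{G}_{x}=s^{-1}(x)$ satisfies $s(g)=t(g)=x$, so by definition $g\in FC(\mathcal{G})$ exactly when the conjugacy class $C_{g}$ of $g$ inside the group $\mathcal{G}_{x}$ is finite. Hence $FC(\mathcal{G})\cap\mathcal{G}_{x}$ coincides with the FC-center $FC(\mathcal{G}_{x})$ of the countable group $\mathcal{G}_{x}$, and since $\mathcal{G}^{(0)}\cap\mathcal{G}_{x}=\{e_{x}\}$ is the unit of $\mathcal{G}_{x}$, we obtain $s^{-1}(x)\cap\big(FC(\mathcal{G})\setminus\mathcal{G}^{(0)}\big)=FC(\mathcal{G}_{x})\setminus\{e_{x}\}$. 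Recall moreover that $\mathcal{G}_{x}$ is an icc group if and only if $FC(\mathcal{G}_{x})=\{e_{x}\}$.

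Next I would carry the computation through using the decomposition from the proof of Proposition \ref{FCCenterBorel}: with the Lusin--Novikov partition $\mathcal{G}=\bigsqcup_{n\in\mathbb{N}}B_{n}$ into graphs of Borel sections $\tau_{n}\colon X_{n}\to B_{n}$ (with $B_{0}=\mathcal{G}^{(0)}$ and $Y_{0}=\mathcal{G}^{(0)}$), one has $FC(\mathcal{G})=\bigsqcup_{n}\tau_{n}(Y_{n})$ with each $Y_{n}\subseteq\mathcal{G}^{(0)}$ Borel, so that $FC(\mathcal{G})\setminus\mathcal{G}^{(0)}=\bigsqcup_{n\geq1}\tau_{n}(Y_{n})$ is Borel and, since each $\tau_{n}$ is an injective section, $x\mapsto\#\big(s^{-1}(x)\cap(FC(\mathcal{G})\setminus\mathcal{G}^{(0)})\big)=\sum_{n\geq1}1_{Y_{n}}(x)$ is a Borel function into $\mathbb{N}\cup\{\infty\}$. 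By the definition of $\mu_{s}$ together with the previous paragraph,
\[
0=\mu_{s}\big(FC(\mathcal{G})\setminus\mathcal{G}^{(0)}\big)=\int_{\mathcal{G}^{(0)}}\#\big(FC(\mathcal{G}_{x})\setminus\{e_{x}\}\big)\,d\mu(x),
\]
and since the integrand is non-negative and $(\mathbb{N}\cup\{\infty\})$-valued, it must equal $0$ for $\mu$-almost every $x$; that is, $FC(\mathcal{G}_{x})=\{e_{x}\}$, so $\mathcal{G}_{x}$ is icc for $\mu$-almost every $x\in\mathcal{G}^{(0)}$. There is no genuine obstacle here: the only points requiring attention are the measurability of the integrand, which is precisely what the Borel-ness of the sets $Y_{n}$ in the proof of Proposition \ref{FCCenterBorel} supplies, and the elementary identity $\mu_{s}(\tau_{n}(Y_{n}))=\mu(Y_{n})$, valid because $\tau_{n}$ is an injective Borel section.
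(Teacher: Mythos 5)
Your argument is correct and follows essentially the same route as the paper's proof: both invoke the Lusin--Novikov decomposition $FC(\mathcal{G})=\bigsqcup_{n}\tau_{n}(Y_{n})$ from Proposition \ref{FCCenterBorel} and use the definition of $\mu_{s}$ to conclude that the sets $Y_{n}$, $n\geq1$, are $\mu$-null, whence the fibers are icc outside the negligible set $\bigcup_{n\geq1}Y_{n}$. Your phrasing via the vanishing of the non-negative integrand $x\mapsto\#\big(FC(\mathcal{G}_{x})\setminus\{e_{x}\}\big)$ is just a repackaging of the paper's observation that $\mu(Y_{n})=\mu_{s}(\tau_{n}(Y_{n}))=0$ for each $n\geq1$.
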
 
\begin{proof}
Define a sequence $(\tau_{n})_{n\in\mathbb{N}}$ of Borel sections and Borel sets $Y_{n}$, $n\in\mathbb{N}$ as in the proof of the previous proposition. Note that by the assumptions on the codomains of the sections, for any $n\geq1$ the intersection of $\tau_{n}(Y_{n})$ and $\mathcal{G}^{(0)}$ is trivial and the sets $\tau_{n}(Y_{n})$ are disjoint bisections. In particular, the assumption $\mu_{s}(FC(\mathcal{G})\setminus\mathcal{G}^{(0)})=0$ implies that $\mu_s(Y_{n})=\mu_{s}(\tau_{n}(Y_{n}))=0$ for all $n\geq1$. It follows that the union $\bigcup_{n\in\mathbb{N}}Y_{n}$ is a negligible set. By construction we have 
\[
\bigcup_{n\in\mathbb{N}}Y_{n}=\{x\in\mathcal{G}^{(0)}\mid\#C_{g}<\infty\text{ for some non-trivial }g\in\mathcal{G}_{x}\}\,.
\]
 Thus, outside of this negligible set, the fibers are all icc groups.
\end{proof}
By \cite[Theorem 3.1]{Sutherland85}, if $\mathcal{G}$ is a Borel field of groups, and $\mathcal{N}\subseteq\mathcal{G}$ is a Borel field of normal subgroups, then the quotient space $\mathcal{G}/\mathcal{N}$ is also a standard Borel space.

We are now ready to define the analogue of $FC_{\alpha}$ for a discrete Borel field of groups.

\begin{definition} \label{HypercenterDefinition} Let $\mathcal{G}$ be a discrete Borel field of groups. Define a $\aleph_{1}$-indexed sequence of quotients $\mathcal{G}\overset{\pi_{\alpha}}{\twoheadrightarrow}\mathcal{G}_{\alpha}$ as well as maps $\pi_{\beta,\alpha}:\mathcal{G}_{\beta}\to\mathcal{G}_{\alpha}$ for $\beta<\alpha<\aleph_{1}$ by using recursion on $\alpha$:
\begin{enumerate}[label=(\arabic*)]
\item For $\alpha=0$ let $\mathcal{G}_{0}:=\mathcal{G}$ and $\pi_{0}:=\text{id}$.
\item If $\alpha<\aleph_{1}$ is a limit ordinal, then set $\mathcal{G}_{\alpha}:=\mathcal{G}/\bigcup_{\beta<\alpha}\ker(\pi_{\beta})$ and let $\pi_{\alpha}$ denote the corresponding quotient map. For each $\beta<\alpha$, since $\ker(\pi_{\beta})\subseteq\ker(\pi_{\alpha})$, the quotient map $\pi_{\alpha}$ factors to a map $\pi_{\beta,\alpha}:\mathcal{G}_{\beta}\to\mathcal{G}_{\alpha}$.
\item If $\alpha=\beta+1$, then set $\mathcal{G}_{\alpha}:=\mathcal{G}_{\beta}/FC(\mathcal{G}_{\beta})$ and let $\pi_{\alpha}$ denote the composition of the map $\pi_{\beta}$ with the quotient map $\pi_{\beta,\alpha}:\mathcal{G}_{\beta}\to\mathcal{G}_{\beta}/FC(\mathcal{G}_{\beta})$. Similarly, for any $\gamma<\beta$, let $\pi_{\gamma,\alpha}:=\pi_{\gamma,\beta}\circ\pi_{\beta,\alpha}$. 
\end{enumerate}
For each $\alpha<\aleph_{1}$ denote the kernel of the map $\pi_{\alpha}$ by $FC_{\alpha}(\mathcal{G})$. \end{definition}

One may ask whether, similar to the group setting, the sequence in Definition \ref{HypercenterDefinition} eventually stabilizes. Although we are not able to prove such a statement, the next proposition illustrates that the sequence stabilizes outside of a negligible set.

\begin{proposition} \label{proposition: eventually negligible} Let $(\mathcal{G},\mu)$ be a discrete measured field of groups. Then there is an ordinal $\alpha<\aleph_{1}$ such that $FC_{\beta}(\mathcal{G})\setminus FC_{\alpha}(\mathcal{G})$ is a negligible set for all $\beta>\alpha$. In particular, $FC(\mathcal{G}_{\alpha})\setminus\mathcal{G}_{\alpha}^{(0)}$ is a negligible set. \end{proposition}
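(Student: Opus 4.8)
The plan is to run the transfinite recursion of Definition \ref{HypercenterDefinition} and exploit the fact that the chain $(FC_\alpha(\mathcal{G}))_{\alpha<\aleph_1}$ is increasing, so that the associated sequence of measures $\mu_s(FC_\alpha(\mathcal{G}))\in[0,\infty]$ is non-decreasing. The key idea is that only countably many of these increments can be ``non-negligible'': if $FC_{\alpha+1}(\mathcal{G})\setminus FC_\alpha(\mathcal{G})$ has strictly positive $\mu_s$-measure at uncountably many stages $\alpha<\aleph_1$, then by countable additivity we could only accommodate countably many of them inside a set of finite $\mu_s$-mass after passing to a finite-measure exhaustion — contradicting $\aleph_1$ being uncountable. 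So first I would fix a $\sigma$-finite exhaustion and reduce to the case where $\mu_s$ is finite on the relevant pieces.

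Concretely, first I would observe that for $\beta<\alpha$ we have $\ker(\pi_\beta)\subseteq\ker(\pi_\alpha)$, i.e.\ $FC_\beta(\mathcal{G})\subseteq FC_\alpha(\mathcal{G})$, directly from the construction. Second, since $\mathcal{G}$ is a discrete measured groupoid the measure $\mu_s$ is $\sigma$-finite, so write $\mathcal{G}=\bigcup_k E_k$ with $\mu_s(E_k)<\infty$ and $E_k\subseteq E_{k+1}$. For each fixed $k$, the function $\alpha\mapsto\mu_s\big(FC_\alpha(\mathcal{G})\cap E_k\big)$ is non-decreasing and bounded by $\mu_s(E_k)<\infty$, and the increments $\mu_s\big((FC_{\alpha+1}(\mathcal{G})\setminus FC_\alpha(\mathcal{G}))\cap E_k\big)$ over $\alpha<\aleph_1$ are non-negative with sum at most $\mu_s(E_k)$; hence at most countably many of them are strictly positive. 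Taking the union over all $k\in\mathbb{N}$, the set of ordinals $\alpha<\aleph_1$ for which $FC_{\alpha+1}(\mathcal{G})\setminus FC_\alpha(\mathcal{G})$ is \emph{not} $\mu_s$-negligible is a countable union of countable sets, hence countable, and so it is bounded by some $\alpha_0<\aleph_1$ (using that $\aleph_1$ is regular). I would also need to handle limit stages: at a limit ordinal $\lambda$ one has $FC_\lambda(\mathcal{G})=\bigcup_{\beta<\lambda}FC_\beta(\mathcal{G})$ up to the identification, so if each $FC_\beta(\mathcal{G})\setminus FC_{\alpha_0}(\mathcal{G})$ is negligible for $\alpha_0\le\beta<\lambda$, then $FC_\lambda(\mathcal{G})\setminus FC_{\alpha_0}(\mathcal{G})$ is a countable union of negligible sets, hence negligible.

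Combining these, set $\alpha:=\alpha_0$. For any $\beta>\alpha$ an easy transfinite induction on $\beta$ — using the successor step from the increment argument and the limit step just described — shows $FC_\beta(\mathcal{G})\setminus FC_\alpha(\mathcal{G})$ is negligible. Finally, for the ``in particular'' clause: $FC(\mathcal{G}_\alpha)=\ker(\pi_{\alpha,\alpha+1})$, so $\pi_\alpha^{-1}\big(FC(\mathcal{G}_\alpha)\setminus\mathcal{G}_\alpha^{(0)}\big)=FC_{\alpha+1}(\mathcal{G})\setminus FC_\alpha(\mathcal{G})$, which we have just shown is $\mu_s$-negligible; pushing forward (and using that $\mu_s$ and the pushforward of $\mu_s$ under $\pi_\alpha$ have the same null sets on the relevant sets, or simply arguing at the level of the measure on the fibers) gives that $FC(\mathcal{G}_\alpha)\setminus\mathcal{G}_\alpha^{(0)}$ is negligible.

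I expect the main obstacle to be bookkeeping at limit ordinals and making the passage through the quotient maps $\pi_\alpha$ rigorous at the level of standard Borel spaces and their $\sigma$-finite measures: one must check that the quotient constructions of Definition \ref{HypercenterDefinition} are compatible with the measure-theoretic claims (in particular that ``negligible'' is preserved under the relevant pushforwards and preimages, using \cite[Theorem 3.1]{Sutherland85} that the quotients are standard Borel). The purely cardinal-arithmetic/measure-counting heart of the argument is short; the care lies in the measurable-field plumbing.
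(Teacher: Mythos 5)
Your proposal is correct and follows essentially the same route as the paper: both arguments exhaust $(\mathcal{G},\mu_s)$ by countably many finite-measure Borel pieces (the paper uses the Lusin--Novikov decomposition into bisections $B_n$ with $\mu_s(B_n)\le 1$, you use a generic $\sigma$-finite exhaustion), observe that the bounded increasing net $\alpha\mapsto\mu_s(FC_\alpha(\mathcal{G})\cap\,\cdot\,)$ can strictly increase only countably often, and take a countable supremum of the resulting stabilization ordinals. Your extra bookkeeping at limit stages and the identification $\pi_\alpha^{-1}\bigl(FC(\mathcal{G}_\alpha)\setminus\mathcal{G}_\alpha^{(0)}\bigr)=FC_{\alpha+1}(\mathcal{G})\setminus FC_\alpha(\mathcal{G})$ match what the paper does, just spelled out in more detail.
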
 
\begin{proof}
Decompose $\mathcal{G}$ into Borel bisections $\mathcal{G}=\bigsqcup_{n\in\mathbb{N}}B_{n}$. For a fixed integer $n\in\mathbb{N}$, the quantities $\mu_{s}(B_{n}\cap FC_{\alpha}(\mathcal{G}))$ give an increasing sequence of numbers between $0$ and $1$. Let $\alpha_{n}$ be the least ordinal for which $\mu_{s}(B_{n}\cap FC_{\alpha_{n}}(\mathcal{G}))=\mu_{s}(B_{n}\cap FC_{\beta}(\mathcal{G}))$ for all $\beta>\alpha_{n}$. Such an ordinal must exist since otherwise our sequence would be unbounded. As a countable supremum of countable ordinals, $\alpha:=\sup_{n\in\mathbb{N}}\alpha_{n}$ must too be a countable ordinal. For all $\beta>\alpha$ we then have
\[
FC_{\beta}(\mathcal{G})\setminus FC_{\alpha}(\mathcal{G})=\bigsqcup_{n\in\mathbb{N}}B_{n}\cap(FC_{\beta}(\mathcal{G})\setminus FC_{\alpha}(\mathcal{G}))\,,
\]
so that $FC_{\beta}(\mathcal{G})\setminus FC_{\alpha}(\mathcal{G})$ has measure zero.

Since $FC_{\alpha}(\mathcal{G})$ is the kernel of $\pi_{\alpha}$, we get that $FC(\mathcal{G}_{\alpha})=\pi_{\alpha}(FC_{\alpha+1}(\mathcal{G}))=\pi_{\alpha}(FC_{\alpha+1}(\mathcal{G})\setminus FC_{\alpha}(\mathcal{G}))\cup\mathcal{G}_{\alpha}^{(0)}$. It finally follows that $FC(\mathcal{G}_{\alpha})\setminus\mathcal{G}_{\alpha}^{(0)}$ must be negligible.
\end{proof}
The following corollary, along with Corollary \ref{Corollary: negligible FC} demonstrates that eventually $\mathcal{G}/FC_{\alpha}(\mathcal{G})$ is an icc groupoid.

\begin{corollary}\label{Corollary: N_alpha is eventually FCH} Let $(\mathcal{G},\mu)$ be a discrete measured field of groups. Then there exists an $\alpha<\aleph_{1}$ and a co-null Borel subset $E\subseteq\mathcal{G}^{(0)}$ such that 
\[
FC_{\alpha}(\mathcal{G})|_{E}=\bigsqcup_{x\in E}FCH(\mathcal{G}_{x})\,.
\]
In particular, the groupoid $\mathcal{G}_{\alpha}$ is an icc groupoid. \end{corollary}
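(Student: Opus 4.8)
The plan is to combine Proposition~\ref{proposition: eventually negligible} with Corollary~\ref{Corollary: negligible FC}. First I would invoke Proposition~\ref{proposition: eventually negligible} to fix an ordinal $\alpha<\aleph_1$ with the property that $FC_\beta(\mathcal{G})\setminus FC_\alpha(\mathcal{G})$ is $\mu_s$-negligible for every $\beta>\alpha$, and moreover $FC(\mathcal{G}_\alpha)\setminus\mathcal{G}_\alpha^{(0)}$ is negligible. The point of the stabilization is that, fiberwise, the recursion defining $FC_\bullet(\mathcal{G})$ restricts to the usual transfinite FC-series: unwinding Definition~\ref{HypercenterDefinition} and the fact that quotients of Borel fields of groups by Borel fields of normal subgroups are again such (via \cite{Sutherland85}), one checks that $(FC_\beta(\mathcal{G}))_x = FC_\beta(\mathcal{G}_x)$ for $\mu$-almost every $x$ and every $\beta<\aleph_1$ — this is a routine measurable-selection bookkeeping argument using the Lusin--Novikov decomposition exactly as in the proof of Proposition~\ref{FCCenterBorel}, since the FC-center operation is defined pointwise on fibers.

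Next I would argue that on a co-null set the series $(FC_\beta(\mathcal{G}_x))_\beta$ has already reached its hypercenter $FCH(\mathcal{G}_x)$ at stage $\alpha$. Indeed, for a fixed $x$ the series $FC_\beta(\mathcal{G}_x)$ is increasing and stabilizes at some countable ordinal (the FC-rank of $\mathcal{G}_x$), so it suffices to show that the rank is $\le\alpha$ almost everywhere. Decompose $\mathcal{G}=\bigsqcup_n B_n$ into Borel bisections as before; by the conclusion of Proposition~\ref{proposition: eventually negligible}, for each $n$ the set of $x$ with $\tau_n(x)\in FC_{\beta}(\mathcal{G})\setminus FC_\alpha(\mathcal{G})$ for some $\beta>\alpha$ is null. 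Taking the union over $n$ produces a null set $N$ off of which $FC_\beta(\mathcal{G})_x = FC_\alpha(\mathcal{G})_x$ for all $\beta>\alpha$, i.e.\ $FC_{\alpha+1}(\mathcal{G}_x)=FC_\alpha(\mathcal{G}_x)$, which by the fiberwise identification and the definition of the FC-hypercenter gives $FC_\alpha(\mathcal{G}_x)=FCH(\mathcal{G}_x)$. Setting $E:=\mathcal{G}^{(0)}\setminus N$ (after also removing the null set where the fiberwise identification of $FC_\alpha$ fails), we obtain
\[
FC_\alpha(\mathcal{G})|_E = \bigsqcup_{x\in E} FC_\alpha(\mathcal{G}_x) = \bigsqcup_{x\in E} FCH(\mathcal{G}_x),
\]
which is the displayed formula.

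Finally, for the ``in particular'' clause: $\mathcal{G}_\alpha=\mathcal{G}/FC_\alpha(\mathcal{G})$ has fibers $(\mathcal{G}_\alpha)_x = \mathcal{G}_x/FCH(\mathcal{G}_x)$ for $\mu$-almost every $x\in E$, and each such quotient is an icc group by the standard group-theoretic fact (recalled in the paragraph preceding Definition~\ref{HypercenterDefinition}) that $\Gamma/FCH(\Gamma)$ is always icc. By \cite[Proposition~6.1]{berendschot2024factoriality} a discrete measured field of groups with almost every fiber icc is itself icc, so $\mathcal{G}_\alpha$ is an icc groupoid. Alternatively one can cite Corollary~\ref{Corollary: negligible FC} directly: since $FC(\mathcal{G}_\alpha)\setminus\mathcal{G}_\alpha^{(0)}$ is negligible, that corollary immediately gives that $\mu$-almost every fiber of $\mathcal{G}_\alpha$ is icc.

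I expect the main obstacle to be the first step — carefully justifying that the abstractly-defined Borel quotients $\mathcal{G}_\beta$ have fibers equal to the correspondingly-iterated FC-quotients of $\mathcal{G}_x$, uniformly in a measurable way across $\beta<\aleph_1$ transfinite stages. The successor and limit stages each require a small measurable-selection argument (and at limit stages one must check that the increasing countable union of Borel fields of normal subgroups is again Borel with the expected fibers), and one must be slightly careful that the exceptional null sets accumulated over countably many stages still form a null set. Everything else is a direct translation of the group-theoretic FC-hypercenter formalism into the measured-field setting, and the hard analytic input has already been isolated in Proposition~\ref{proposition: eventually negligible}.
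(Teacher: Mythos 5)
Your proposal is correct and follows essentially the same route as the paper: fix $\alpha$ via Proposition \ref{proposition: eventually negligible}, use the Lusin--Novikov decomposition into bisections to produce a co-null set on which $FC_{\alpha+1}(\mathcal{G})_x=FC_{\alpha}(\mathcal{G})_x$, identify this fiberwise with the group-theoretic FC-series to get $FCH(\mathcal{G}_x)$, and deduce the icc claim from Corollary \ref{Corollary: negligible FC} together with \cite[Proposition 6.1]{berendschot2024factoriality}. The only difference is that you make explicit (and slightly over-cautiously, with null sets at each transfinite stage) the fiberwise identification $(FC_\beta(\mathcal{G}))_x=FC_\beta(\mathcal{G}_x)$, which in fact holds exactly since the FC-center and the quotients are defined fiberwise; the paper leaves this step implicit.
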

\begin{proof}
By Proposition \ref{proposition: eventually negligible} we find $\alpha<\aleph_{1}$ for which $FC_{\beta}(\mathcal{G})\setminus FC_{\alpha}(\mathcal{G})$ is negligible for all $\beta>\alpha$. By applying the Lusin--Novikov Theorem, fix a partition $\mathcal{G}=\bigsqcup_{n\in\mathbb{N}}B_{n}$ of Borel bisections $B_{n}$. Since $E_{n}:=s(FC_{\alpha+1}(\mathcal{G})\setminus FC_{\alpha}(\mathcal{G})\cap B_{n})$ is negligible for all $n\in\mathbb{N}$, the set $\bigcup_{n\in\mathbb{N}}E_{n}$ is also negligible. We claim that for every point $x\in E:=\mathcal{G}^{(0)}\setminus(\bigcup_{n\in\mathbb{N}}E_{n})$, we have $FC_{\alpha+1}(\mathcal{G})_{x}=FC_{\alpha}(\mathcal{G})_{x}$. Indeed, the inclusion $FC_{\alpha+1}(\mathcal{G})_{x}\supseteq FC_{\alpha}(\mathcal{G})_{x}$ follows from the construction. Conversely, if $g\in FC_{\alpha+1}(\mathcal{G})_{x}\setminus FC_{\alpha}(\mathcal{G})_{x}$, then there is some $n\in\mathbb{N}$ for which $g\in B_{n}$ and hence $x=s(g)\in E_{n}$, which is a contradiction. We obtain that $FC_{\alpha}(\mathcal{G})_{x}=FCH(\mathcal{G}_{x})$ for all $x\in E$.

Finally, to see that $\mathcal{G}_{\alpha}=\mathcal{G}/FC_{\alpha}(\mathcal{G})$ is icc, first observe that $FC(\mathcal{G}_{\alpha})\setminus\mathcal{G}_{\alpha}^{(0)}$ is a null set. By Proposition \ref{proposition: eventually negligible} and Corollary \ref{Corollary: negligible FC}, $(\mathcal{G}_{\alpha})_{x}$ is an icc group for $\mu$-almost every $x\in\mathcal{G}_{\alpha}^{(0)}$. By \cite[Proposition 6.1]{berendschot2024factoriality}, the discrete measured field of groups $(\mathcal{G}_{\alpha},\mu)$ is icc if and only if $\mu$-almost every $(\mathcal{G}_{\alpha})_{x}$ is icc, hence completing the proof. 
\end{proof}
\begin{remark} As remarked in the preamble to Proposition \ref{proposition: eventually negligible}, we do not currently know whether $FC_{\alpha}(\mathcal{G})$ must eventually stabilize. It would be interesting to know whether this is true since this would allow us to define a definitive notion of the FC-hypercenter of a discrete measured field of groups. \end{remark}

The final goal of this subsection is to show that a non-trivial icc discrete measured field of groups cannot be Choquet--Deny. We do this by exploiting the explicit nature of the proof of \cite[Proposition 2.2]{Frisch2019} in the case of groups. The following lemma generalizes the concept of a switching and super-switching element from \cite[Definition~~2.3]{Frisch2019}.

\begin{lemma}\label{Lemma: Switching elements are Borel} Let $\mathcal{G}$ be a discrete Borel field of groups and let $A\subseteq\mathcal{G}$ be any Borel subset. Define the set of \emph{switching} and \emph{super-switching} elements of $A$ by 
\[
S_{A}:=\{g\in\mathcal{G}\mid gAg^{-1}\cap A\subseteq\mathcal{G}^{(0)}\}
\]
and 
\[
T_{A}:=\{g\in\mathcal{G}\mid(gAg\cup gAg^{-1}\cup g^{-1}Ag\cup g^{-1}Ag^{-1})\cap A\subseteq\mathcal{G}^{(0)}\}
\]
respectively. Then both $S_{A}$ and $T_{A}$ are Borel sets. \end{lemma}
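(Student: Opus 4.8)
The plan is to prove that the complements $\mathcal{G}\setminus S_A$ and $\mathcal{G}\setminus T_A$ are Borel, by exhibiting each of them as the image of a Borel set under a countable-to-one Borel map and then applying the Lusin--Novikov theorem, in the form that the image of a Borel set under a countable-to-one Borel map between standard Borel spaces is again Borel.

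First I would introduce the fiber product
\[
D:=\{(g,a)\in\mathcal{G}\times\mathcal{G}\mid s(g)=s(a)\},
\]
which is a Borel subset of the standard Borel space $\mathcal{G}\times\mathcal{G}$ and hence standard Borel. Because $\mathcal{G}$ is a field of groups, every element satisfies $s=t$, so for $(g,a)\in D$ one has $s(g)=s(a)=t(a)$ and $s(ga)=s(a)=s(g)=t(g^{-1})$; thus each of $gag$, $gag^{-1}$, $g^{-1}ag$ and $g^{-1}ag^{-1}$ is obtained by composing composable pairs, hence is well defined, and the resulting maps $D\to\mathcal{G}$ are Borel, being built from the Borel multiplication and inversion maps. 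Moreover the first-coordinate projection $\pi_1\colon D\to\mathcal{G}$ is countable-to-one, since its fiber over $g$ is $\{g\}\times s^{-1}(\{s(g)\})$ and $s$ is countable-to-one.

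Next I would observe that $g\notin S_A$ if and only if there is some $a\in A$ with $s(a)=s(g)$, $gag^{-1}\in A$ and $gag^{-1}\notin\mathcal{G}^{(0)}$; equivalently, $\mathcal{G}\setminus S_A=\pi_1(W)$, where
\[
W:=\{(g,a)\in D\mid a\in A,\ gag^{-1}\in A,\ gag^{-1}\notin\mathcal{G}^{(0)}\}.
\]
Since $A$ and $\mathcal{G}^{(0)}$ are Borel and conjugation is Borel on $D$, the set $W$ is Borel, so by Lusin--Novikov $\pi_1(W)$ is Borel; hence $S_A$ is Borel. The argument for $T_A$ is the same with $W$ replaced by
\[
W':=\{(g,a)\in D\mid a\in A,\ \{gag,\ gag^{-1},\ g^{-1}ag,\ g^{-1}ag^{-1}\}\cap(A\setminus\mathcal{G}^{(0)})\neq\emptyset\},
\]
which is a finite union of Borel sets; then $\mathcal{G}\setminus T_A=\pi_1(W')$ is Borel.

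The only step that is not routine bookkeeping is the passage from $W$ (respectively $W'$) to its projection, since projections of Borel sets are in general merely analytic; what makes this work is precisely the countable-to-one-ness of $\pi_1$ on $D$, which itself rests on the countable-to-one-ness of $s$. If one prefers to avoid this form of Lusin--Novikov, one may instead fix a partition $\mathcal{G}=\bigsqcup_{n}B_n$ into Borel bisections with Borel inverse sections $\sigma_n\colon s(B_n)\to B_n$ and note that $\pi_1(W)=\bigcup_n\{g\in\mathcal{G}\mid s(g)\in s(B_n),\ \sigma_n(s(g))\in A,\ g\,\sigma_n(s(g))\,g^{-1}\in A\setminus\mathcal{G}^{(0)}\}$, each term of which is visibly Borel, and similarly for $\pi_1(W')$.
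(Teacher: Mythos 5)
Your proposal is correct. The paper takes a slightly different route: it applies Lusin--Novikov once to partition $A$ itself into pieces $A_n$ on which $s$ is injective, and then writes $S_A$ and $T_A$ directly as countable intersections of sets of the form $\{g \mid g^{x}\,(s|_{A_n})^{-1}(s(g))\,g^{y}\in\mathcal{G}^{(0)}\cup(\mathcal{G}\setminus A)\}$, so no projection of a Borel set ever appears and only the ``partition into Borel partial sections'' form of Lusin--Novikov is needed. You instead realize the complements $\mathcal{G}\setminus S_A$ and $\mathcal{G}\setminus T_A$ as images of Borel subsets of the fiber product $D$ under the countable-to-one first-coordinate projection and invoke the stronger ``countable-to-one Borel images are Borel'' form of the theorem; this is a valid and standard consequence of Lusin--Novikov, and your verification that the four products are defined on $D$ (using $s=t$ on a field of groups) and that $\pi_1$ is countable-to-one is exactly what is needed to make it work. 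Your closing alternative --- partitioning $\mathcal{G}$ into bisections $B_n$ with sections $\sigma_n$ and writing the complement as a countable union of visibly Borel sets --- is essentially the paper's argument (applied to $\mathcal{G}$ rather than to $A$, and with the added care of the condition $s(g)\in s(B_n)$, which the paper's formula leaves implicit). The trade-off is minor: the paper's version is more elementary in the descriptive-set-theoretic tools it uses, while yours isolates the one genuinely non-routine step (Borelness of a projection) and explains exactly why it is harmless here.
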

\begin{proof}
By the Lusin--Novikov Theorem, there is a partition $A=\bigsqcup_{n\in\mathbb{N}}A_{n}$ of $A$ such that the restriction $s|_{A_{n}}$ is injective for all $n\in\mathbb{N}$. For a fixed element $n\in\mathbb{N}$ and $x,y\in\{\pm1\}$, observe that the function $f_{n}^{x,y}:\mathcal{G}\to\mathcal{G}$ defined by $f_{n}^{x,y}(g):=g^{x}\cdot(s|_{A_{n}})^{-1}(s(g))\cdot g^{y}$ is Borel. By 
\[
S_{A}=\bigcap_{n \in \mathbb{N}}\{g\in\mathcal{G}\mid f_{n}^{1,-1}(g)\in\mathcal{G}^{(0)}\cup(\mathcal{G}\setminus A)\}
\]
and 
\[
T_{A}=\bigcap_{n,x,y}\{g\in\mathcal{G}\mid f_{n}^{x,y}(g)\in\mathcal{G}^{(0)}\cup(\mathcal{G}\setminus A)\}
\]
it follows that the sets $S_{A}$ and $T_{A}$ are both Borel.
\end{proof}
\begin{definition} Let $\mathcal{G}$ be a discrete Borel field of groups. For a Borel subset $A\subseteq\mathcal{G}$, we say that an element $g\in\mathcal{G}$ is a \emph{switching element} for $A$ if $g\in S_{A}$. Furthermore, We say that $g$ is a \emph{super-switching} element for $A$ if $g\in T_{A}$. \end{definition}

\begin{theorem}\label{theorem: icc field of groups} Suppose that $(\mathcal{G},\mu)$ is a non-trivial icc discrete measured field of groups. Then $\mathcal{G}$ is not Choquet--Deny. \end{theorem}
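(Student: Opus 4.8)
The plan is to adapt the construction from the proof of \cite[Proposition 2.2]{Frisch2019} to the Borel/measurable setting, exploiting the measurable selection machinery developed in Lemmas \ref{FCCenterBorel} and \ref{Lemma: Switching elements are Borel}. By \cite[Proposition 6.1]{berendschot2024factoriality} we may assume that $\mathcal{G}_x$ is a non-trivial icc group for $\mu$-almost every $x\in\mathcal{G}^{(0)}$. Recall that the Frisch--Hartman--Tamuz--Ferdowski argument produces, for each non-trivial icc group $\Gamma$, a non-degenerate probability measure $\nu$ on $\Gamma$ together with a non-constant bounded $\nu$-harmonic function, by building a ``switching-element'' random walk: one picks a sequence of elements and uses super-switching elements to separate their conjugacy classes so that the tail records non-trivial information. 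The key point is that each step of that construction (choosing a non-trivial element, finding a switching or super-switching element, defining the measure) can be carried out Borel-measurably in the fiber parameter $x$, using the Lusin--Novikov theorem to select Borel sections and invoking Lemma \ref{Lemma: Switching elements are Borel} to guarantee that the sets of (super-)switching elements vary measurably.

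Concretely, I would first fix a Borel decomposition $\mathcal{G}=\bigsqcup_{n}B_n$ into bisections with $B_0=\mathcal{G}^{(0)}$, giving Borel sections $\tau_n$. Using that $\mathcal{G}_x$ is icc $\mu$-a.e., I would build recursively a Borel family $(a_k^x)_{k\in\mathbb{N}}$, $a_k^x\in\mathcal{G}_x\setminus\{x\}$, and a Borel family of super-switching elements $(t_k^x)_{k\in\mathbb{N}}$ relative to appropriately chosen finite Borel subsets, mimicking the inductive selection in \cite[Section 2]{Frisch2019}; at each stage the relevant sets ($S_A$, $T_A$ for the current Borel $A$) are Borel by Lemma \ref{Lemma: Switching elements are Borel}, and non-empty in $\mathcal{G}_x$ by the icc hypothesis, so a Borel uniformization exists. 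From this data one defines, fiberwise, a non-degenerate probability measure $d_xP$ on $\mathcal{G}^x=\mathcal{G}_x$ supported on the selected generating set together with their inverses (so that the support generates $\mathcal{G}_x$ as a semigroup); measurability of $x\mapsto d_xP$ follows from the measurability of all the selections, and this yields a non-degenerate invariant Markov operator $P$ on $L^\infty(\mathcal{G},\mu_s)$ in the sense of Subsection \ref{MeasuredGroupoids} — note that for a field of groups the associated equivalence relation is trivial, so $\mathcal{G}^x=\mathcal{G}_x$ and invariance is automatic once $d_xP$ is specified.

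Then, by the FHTF argument applied fiberwise, for $\mu$-almost every $x$ the operator $P_x$ admits a non-constant bounded harmonic function $h_x\in\ell^\infty(\mathcal{G}_x)$: indeed their construction exhibits an explicit bounded $\nu$-harmonic function (built from the asymptotic behaviour of the switching-element random walk, recording which conjugates appear infinitely often), and since our $d_xP$ is literally their measure for the group $\mathcal{G}_x$, the same function works. Hence $P$ is not fiberwise Liouville on a positive-measure — in fact co-null — set of fibers, so $(\mathcal{G},\mu)$ is not Choquet--Deny. The main obstacle is the bookkeeping required to make the \emph{entire} recursive construction of \cite[Proposition 2.2]{Frisch2019} uniformly Borel in $x$: one must check at every inductive step that ``choose an element with property $\Phi$ in $\mathcal{G}_x$'' can be replaced by ``choose a Borel section of $\{g\in\mathcal{G}: \Phi(g,s(g))\}$'', which requires verifying that each such set is Borel (this is exactly what Lemma \ref{Lemma: Switching elements are Borel} is set up to provide) and that it has non-empty fibers $\mu$-a.e. (which follows from the icc hypothesis via \cite[Proposition 6.1]{berendschot2024factoriality}); once this is in place, the verification that $h_x$ is harmonic and non-constant is the same calculation as in \cite{Frisch2019} and needs no change.
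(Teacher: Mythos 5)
Your proposal follows essentially the same route as the paper: reduce to icc fibers, make the recursive switching/super-switching construction of \cite[Proposition 2.2]{Frisch2019} uniformly Borel in the fiber via Lemma \ref{Lemma: Switching elements are Borel} and Lusin--Novikov selections, assemble the fiberwise measures into an invariant Markov operator, and conclude non-Liouvilleness fiberwise. The only cosmetic difference is that the paper first reduces to the case of infinite amenable fibers via \cite{Kaimanovich05, ChuXin2018} and isolates the Borel selection of super-switching elements avoiding a given finite-fiber set as an explicit claim, but these are exactly the details you flag as the remaining bookkeeping.
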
 
\begin{proof}
By the results of \cite{Kaimanovich05, ChuXin2018}, we can assume that all $\mathcal{G}_{x}$, $x\in\mathcal{G}^{(0)}$ are infinite amenable. We proceed by proving the following claim.

\begin{claim*}
    Let $A,B\subseteq\mathcal{G}$ be Borel subsets with finite fibers. Then there exists a Borel section $\sigma:\mathcal{G}^{(0)}\to\mathcal{G}$ such that $\sigma(x)$ is super-switching for $A$ and $\sigma(x)\notin B$ for every $x\in\mathcal{G}^{(0)}$.
\end{claim*}

\begin{proof}[Proof of the Claim] 
Note that since the source map $s:\mathcal{G}\to\mathcal{G}^{(0)}$ is countable-to-one, we can find Borel sections $\sigma_{n}:\mathcal{G}^{(0)}\to\mathcal{G}$ for each $n\in\mathbb{N}$ with $\mathcal{G}=\bigcup_{n\in\mathbb{N}}\sigma_{n}(\mathcal{G}^{(0)})$. By Lemma \ref{Lemma: Switching elements are Borel}, for each $n\in\mathbb{N}$ the set of all elements $x\in\mathcal{G}^{(0)}$ for which $\sigma_{n}(x)$ is super-switching for $A$ is Borel. We can hence define the Borel function $f_{n}:\mathcal{G}^{(0)}\to\mathbb{N}\cup\{\infty\}$ by
\[
f_{n}(x):=\begin{cases}
\infty & \text{, if }\sigma_{n}(x)\text{ is not super-switching for }A\text{ or }\sigma_{n}(x)\in B\\
n & \text{, if }\sigma_{n}(x)\text{ is super-switching for }A\text{ and }\sigma_{n}(x)\notin B
\end{cases}\,.
\]
and let $f:=\inf_{n\in\mathbb{N}}f_{n}$. By \cite[Proposition 2.5]{Frisch2019}, there are infinitely many super-switching elements for each fiber $A_{x}$ in $\mathcal{G}_{x}$. It follows that $f$ is finite. We can then set $\sigma(x)=\sigma_{f(x)}(x)$ to obtain the statement of the claim.
\end{proof}

Let now $\tau:\mathcal{G}^{(0)}\to\mathcal{G}\setminus\mathcal{G}^{(0)}$ be any Borel section. We adapt the construction of \cite[Proposition~2.2]{Frisch2019}. First fix $0<\varepsilon<\frac{1}{8}$ and let $p$, $K,N\in\mathbb{N}$ and $E_{\varepsilon,m}\subseteq\mathbb{N}^{m}$ be the probability measure, the constants, and the events from \cite[Lemma~2.6]{Frisch2019} respectively. Using the Lusin--Novikov Theorem, we may decompose $\mathcal{G}$ via $\mathcal{G}=\bigcup_{n\in\mathbb{N}}\mathcal{G}_{n}$ where the restrictions $s|_{\mathcal{G}_{n}}$, $n\in\mathbb{N}$ are bijective. By shifting and replacing $\mathcal{G}_{n}$ with $\mathcal{G}^{(0)}$ if necessary, we may assume that for every $n\leq N$, $\mathcal{G}_{n}=\mathcal{G}^{(0)}$. We define Borel sections $\tau_{n}:\mathcal{G}^{(0)}\to\mathcal{G}$ and Borel sets $A_{n}$, $B_{n}$, and $C_{n}$ recursively as follows. First set $\tau_{1}=\ldots=\tau_{N}=\text{id}$. Given $\tau_{1},\ldots,\tau_{n}$, set $A_{n}:=\tau_{n}(\mathcal{G}^{(0)})\cup\tau_{n}(\mathcal{G}^{(0)})^{-1}\cup\mathcal{G}_{n}\cup\mathcal{G}_{n}^{-1}$, $B_{n}:=\bigcup_{i\leq n}A_{i}$ and $C_{n}:=B_{n}\cup\tau(\mathcal{G}^{(0)})\cup\tau(\mathcal{G}^{(0)})^{-1}$. Lastly we let $\tau_{n+1}$ be a Borel section of super-switching elements for $(C_{n})^{2n+1}$ that has trivial intersection with $(C_{n})^{8n+1}$. Such a Borel section exists by the claim above.

For each $n\in\mathbb{N}$ define a symmetric probability measure $\mu_{n,x}$ on $(A_n)_x$ by 
\begin{eqnarray*}
\mu_{n,x}&:=& \varepsilon2^{-n}\left(\frac{1}{2}\delta_{s|^{-1}_{\mathcal{G}_{n}}(x)}+\frac{1}{2}\delta_{(s|^{-1}_{\mathcal{G}_{n}}(x))^{-1}}\right) \\
& & \qquad +(1-\varepsilon2^{-n})\left(\frac{1}{2}\delta_{\tau_{n}(x)}+\frac{1}{2}\delta_{\tau_{n}(x)^{-1}}\right)\,.
\end{eqnarray*}
To see that $(\mu_{n,x})_{x\in\mathcal{G}^{(0)}}$ will induce an invariant Markov operator $P_{n}$, it suffices by \cite[Proposition 3.4]{Kaimanovich05} to check that for all non-negative Borel functions $f$ on $\mathcal{G}$, the map $P_{n}f:x\mapsto\int_{\mathcal{G}^{x}}f(g)d\mu_{n,x}(g)$ on $\mathcal{G}^{(0)}$ is Borel. But this easily follows from the identity
\begin{eqnarray*}
P_{n}f(x) & = & \varepsilon2^{-n}\left(\frac{1}{2}f\left(s|_{\mathcal{G}_{n}}^{-1}(x)\right)+\frac{1}{2}f\left((s|_{\mathcal{G}_{n}}^{-1}(x))^{-1}\right)\right)\\
 &  & \qquad+(1-\varepsilon2^{-n})\left(\frac{1}{2}f(\tau_{n}(x))+\frac{1}{2}f\left(\tau_{n}(x)^{-1}\right)\right)\,.
\end{eqnarray*}
Finally, we define $\mu_{x}:=\sum_{n\in\mathbb{N}}p(n)\mu_{n,x}$ for every $x\in\mathcal{G}^{(0)}$. Every such measure has full support since $\text{supp}(p)=\mathbb{N}$ and by \cite[Proposition~2.2]{Frisch2019}, each pair $(\mathcal{G}_{x},\mu_{x})$ is not Liouville. It then follows that the associated Markov operator $\sum_{n}p(n)P_{n}$ is not Lioville, so that $\mathcal{G}$ is not Choquet--Deny. 
\end{proof}
    
\subsection{Proof of Theorem \ref{Main Theorem CD}}

In this subsection, we prove our main result Theorem \ref{Main Theorem CD}. The proof splits up into two parts: in Subsection \ref{ChoquetDenyUnderQuotients} we show that the Choquet--Deny property is inherited by its equivalence relation and by going over to the corresponding isotropy subgroupoid. The converse implication will be treated in Subsection \ref{GroupoidMarkovChains} by introducing suitable Markov chains and by applying Doob's optional stopping formula (see Theorem \ref{DoobTheorem}).

\subsubsection{The Choquet--Deny property under quotients}

\label{ChoquetDenyUnderQuotients}

First we show that under suitable conditions, the Choquet--Deny property passes to quotients.

\begin{proposition} \label{Prop: quotients under finite-to one maps remain CD} 
Let $(\mathcal{G},\mu)$ and $(\mathcal{H},\nu)$ be discrete measured groupoids. Suppose that there is a surjective Borel groupoid homomorphism $\rho:\mathcal{G}\to\mathcal{H}$ such that the restriction $\rho|_{\mathcal{G}^{(0)}}:\mathcal{G}^{(0)}\rightarrow\mathcal{H}^{(0)}$ is measure class preserving and countable-to-one, and suppose that $\rho|_{\mathcal{G}^{x}}:\mathcal{G}^{x}\to\mathcal{H}^{\rho(x)}$ is surjective for $\mu$-almost every $x\in\mathcal{G}^{(0)}$. If $(\mathcal{G},\mu)$ is Choquet--Deny, then $(\mathcal{H},\nu)$ is Choquet--Deny as well. 
\end{proposition}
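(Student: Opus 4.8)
The plan is to pull back a non-degenerate invariant Markov operator on $\mathcal{H}$ to one on $\mathcal{G}$, exploit that $\mathcal{G}$ is Choquet--Deny, and push the harmonicity back down. So suppose $(\mathcal{H},\nu)$ is not Choquet--Deny. By Lemma~\ref{One-shotMixing} we may fix an invariant Markov operator $Q$ on $L^\infty(\mathcal{H},\nu_s)$ with $\operatorname{supp}(d_y Q)=\mathcal{H}^y$ for $\nu$-almost every $y$ that is not fiberwise Liouville, witnessed by a non-constant $P_y$-harmonic function $f_y\in\ell^\infty(\mathcal{H}^y)$ on a positively-measured set of $y$. The first step is to build from the data $(d_y Q)_y$ a Borel family of probability measures $(\pi_g)_{g\in\mathcal{G}}$ on $\mathcal{G}$: for $x\in\mathcal{G}^{(0)}$, since $\rho|_{\mathcal{G}^x}\colon\mathcal{G}^x\to\mathcal{H}^{\rho(x)}$ is a surjection between countable sets, one uses the Lusin--Novikov theorem to choose a Borel family of Borel probability measures $d_x P$ on $\mathcal{G}^x$ supported on $\mathcal{G}^x$ and satisfying $(\rho|_{\mathcal{G}^x})_*(d_x P)=d_{\rho(x)}Q$ — for instance, first pick a Borel right inverse to $\rho$ on each fibre to transport $d_{\rho(x)}Q$ and then spread a little mass (say via $\widetilde P$ as in Lemma~\ref{One-shotMixing}) to make the support all of $\mathcal{G}^x$; then extend to all $g\in\mathcal{G}$ by invariance, $\pi_g:=g\cdot\pi_{s(g)}$, which by \cite[Proposition 3.4]{Kaimanovich05} gives a genuine invariant Markov operator $P$ on $\mathcal{G}$. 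By construction $\operatorname{supp}(d_x P)=\mathcal{G}^x$, so $P$ is non-degenerate, and hence $P$ is fiberwise Liouville because $\mathcal{G}$ is Choquet--Deny.

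Next I would transfer harmonicity across $\rho$ in both directions on fibres. The key observation is the intertwining identity: for $\mu$-almost every $x$ and any bounded Borel $h$ on $\mathcal{H}^{\rho(x)}$,
\[
P_x\,(h\circ\rho)(g)=\sum_{k\in\mathcal{G}^{s(g)}}(d_{s(g)}P)(k)\,h(\rho(g)\rho(k))=\sum_{\ell\in\mathcal{H}^{s(\rho(g))}}(d_{s(\rho(g))}Q)(\ell)\,h(\rho(g)\ell)=(Q_{\rho(x)}h)\circ\rho\,(g),
\]
using that $\rho$ is a groupoid homomorphism, that it is fibrewise surjective, and that $(\rho|_{\mathcal{G}^{s(g)}})_*(d_{s(g)}P)=d_{s(\rho(g))}Q$. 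Thus $h\mapsto h\circ\rho$ sends $H^\infty(\mathcal{H}^{\rho(x)},Q_{\rho(x)})$ into $H^\infty(\mathcal{G}^x,P_x)$. Since $\rho|_{\mathcal{G}^x}$ is surjective, $h\mapsto h\circ\rho$ is injective and sends non-constant functions to non-constant functions. Therefore, on the positively-measured set of $y=\rho(x)$ where $f_y$ witnesses non-triviality — which, because $\rho|_{\mathcal{G}^{(0)}}$ is measure-class preserving, pulls back to a positively-measured set of $x\in\mathcal{G}^{(0)}$ — the function $f_{\rho(x)}\circ\rho$ is a non-constant $P_x$-harmonic function. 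This contradicts fiberwise Liouvilleness of $P$, so $(\mathcal{H},\nu)$ must be Choquet--Deny.

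The step I expect to be the main obstacle is the measurable selection of the lifted transition measures $(d_x P)_{x\in\mathcal{G}^{(0)}}$: one needs a Borel choice, uniformly in $x$, of a probability measure on $\mathcal{G}^x$ pushing forward to $d_{\rho(x)}Q$ and with full support, and one must check this choice genuinely yields a Borel family in the sense of Subsection~\ref{MeasuredGroupoids} (so that $g\mapsto\int f\,d\pi_g$ is Borel for every non-negative Borel $f$). This is where the Lusin--Novikov uniformization is essential — decomposing $\mathcal{G}$ into countably many Borel bisections to obtain Borel sections of $\rho$ on fibres, then defining $d_x P$ as an explicit countable combination of point masses along these sections with Borel coefficients determined by $d_{\rho(x)}Q$ and the fibre cardinalities. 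A minor secondary point is bookkeeping the various "$\mu$-almost every $x$" conditions (fibrewise surjectivity of $\rho$, the full-support and pushforward properties) so that their common intersection is still co-null, and that $\rho$ measure-class preserving guarantees the bad set downstairs corresponds to a bad set upstairs.
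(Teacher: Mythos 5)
Your proposal is correct and follows essentially the same route as the paper: both lift the Markov operator from $\mathcal{H}$ to $\mathcal{G}$ along Lusin--Novikov sections of $\rho$ (the paper averages pushforwards along a countable family of partial sections with weights $\theta(n)$, which amounts to your ``countable combination of point masses along these sections''), verify the intertwining identity $P_x(h\circ\rho)=(Q_{\rho(x)}h)\circ\rho$, and pull back a non-constant harmonic function using that $\rho$ is fibrewise surjective and measure class preserving on units. The only shaky parenthetical is ``spread a little mass via $\widetilde{P}$'', which on its own would not guarantee full support when the image of a single section fails to generate the fibre, but the Lusin--Novikov construction you describe in your final paragraph is exactly the correct (and the paper's) fix.
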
 
\begin{proof}
Since $\rho$ is countable-to-one, the Lusin--Novikov Theorem implies the existence of Borel bijections $\rho_{n}:U_{n}\rightarrow\mathcal{H}$ on Borel subsets $U_{n}\subseteq\mathcal{G}$ such that the map $\rho$ decomposes via $\rho=\bigcup_{n<\kappa}\rho_{n}$ and where $\kappa$ is an at most countable cardinal. Note that the $U_{n}$ need not be mutually disjoint.

\begin{claim*}
    We may assume that for $\mu$-almost every $x\in\mathcal{G}^{(0)}$ and every $n<\kappa$ either $\mathcal{G}^{x}\cap U_{n}=\emptyset$ or the restriction $\rho|_{\mathcal{G}^{x}\cap U_{n}}:\mathcal{G}^{x}\cap U_{n}\to\mathcal{H}^{\rho(x)}$ is bijective.
\end{claim*}

\begin{proof}[Proof of the Claim]
Write $\mathcal{G}^{(0)}=\bigcup_{n<\kappa_{1}}X_{n}$ for suitable Borel subsets $X_{n}\subseteq\mathcal{G}^{(0)}$ for which $\rho|_{X_{n}}:X_{n}\to\mathcal{H}^{(0)}$ is a bijection for every $n$. Note that then the restriction $\rho|_{t^{-1}(X_{n})}:t^{-1}(X_{n})\rightarrow\mathcal{H}$ is still surjective. For each $n$ we may apply the Lusin--Novikov Theorem to find Borel subsets $U_{m}^{n}\subseteq t^{-1}(X_{n})$ with $\bigcup_{m}U_{m}^{n}=t^{-1}(X_{n})$ such that the restriction $\rho|_{U_{m}^{n}}:U_{m}^{n}\to\mathcal{H}$ is a bijection. The subsets $U_{m}^{n}$ then satisfy the required conditions.
\end{proof}

Now suppose that $P$ is an invariant Markov operator on $\mathcal{H}$ with an associated family of probability measures $(d_{y}P)_{y\in\mathcal{H}^{(0)}}$. By Lemma \ref{One-shotMixing} we may assume that $\text{supp}(d_{y}P)=\mathcal{G}^{y}$ for every $y\in\mathcal{H}^{(0)}$. For every $n\in\mathbb{N}$ define a family $(\pi_{x}^{n})_{x\in\mathcal{G}^{(0)}}$of probability measures on $\mathcal{G}$ via
\[
\pi_{x}^{n}:=\begin{cases}
\delta_{x} & \text{, if }x\notin t\circ\sigma_{n}(\mathcal{H})\\
\sigma_{n,\ast}(d_{\rho(x)}P) & \text{, if }x\in t\circ\sigma_{n}(\mathcal{H})
\end{cases}\,,
\]
where $\sigma_{n}:=\rho_{n}^{-1}$ and where $\sigma_{n,\ast}$ denotes the \emph{push forward measure} of $d_{y}P$ with respect to $\sigma_{n}$. Since $\sigma_{n}$ is Borel, it follows that for every positive Borel function $f:\mathcal{G}\to[0,\infty)$ the map $x\mapsto\sum_{g\in\mathcal{G}^{x}}\pi_{x}^{n}(g)f(g)$ is Borel as well. Hence, by \cite[Proposition 3.4]{Kaimanovich05} the family $(\pi_{x}^{n})_{x\in\mathcal{G}^{(0)}}$ induces an invariant Markov operator $Q^{n}$ on $L^{\infty}(\mathcal{G},\mu_{s})$.

Given an element $y\in\mathcal{H}^{(0)}$, suppose that $f\in H^{\infty}(H^{y},P_{y})$. Furthermore, let $x:=t(\sigma_{n}(y))$ and let $g\in\mathcal{G}$ be an element with $s(g)=x$, so that $s(\rho(g))=y$. Then, 
\begin{align}
Q^{n}(f\circ\rho)(g) & =\sum_{h\in\mathcal{G}^{s(g)}}\pi_{s(g)}^{n}(h)(f\circ\rho)(gh)\nonumber \\
 & =\sum_{h\in\mathcal{G}^{s(g)}}\left(\sigma_{n,\ast}(d_{y}P)(h)\right)f\left(\rho(g)\rho(h)\right)\nonumber \\
 & =\sum_{h\in\mathcal{G}^{s(g)}\cap U_{n}}\left(d_{y}P(\rho_{n}(h))\right)f(\rho(g)\rho_{n}(h))\nonumber \\
 & =\sum_{k\in\mathcal{H}^{y}}\left(d_{y}P(k)\right)f(\rho(g)k)\nonumber \\
 & =(f\circ\rho)(g)\label{eq:HarmonicityCalculation}
\end{align}

Now fix a probability measure $\theta$ on the ordinal $\kappa$ with full support and consider the Markov operator $Q$ on $\mathcal{G}$ induced by $d_{x}Q:=\sum_{n<\kappa}\theta(n)\left(d_{x}Q^{n}\right)$ for $x\in\mathcal{G}^{(0)}$. Since $P$ satisfies $\text{supp}(d_{y}P)=\mathcal{G}^{y}$ for every $y\in\mathcal{H}^{(0)}$ and $\rho|_{\mathcal{G}^{(0)}}$ is measure class preserving, $Q$ also satisfies $\text{supp}(d_{x}Q)=\mathcal{G}^{y}$ for every $x\in\mathcal{G}^{(0)}$. Indeed, if $g\in\mathcal{G}^{x}$, then $g\in U_{n}$ for some $n$ and hence $d_{x}Q^{n}(g)=(d_{\rho(x)}P)(\rho(g))\neq0$. The calculation in \eqref{eq:HarmonicityCalculation} furthermore shows that $f\circ(\rho|_{\mathcal{G}^{x}})\in H^{\infty}(\mathcal{G}^{x},Q_{x})$ for every $x\in\rho^{-1}(y)$. It follows that $f\circ\rho$ is constant.

Finally, suppose there is some non-negligible subset $E\subseteq\mathcal{H}^{(0)}$ such that for all $y\in E$ there exists a non-constant function $f_{y}\in H^{\infty}(\mathcal{H}^{y},P_{y})$. Then, by the above we get that $f_{y}\circ(\rho|_{\mathcal{G}^{x}})\in H^{\infty}(\mathcal{G}^{x},Q_{x})$ for each $x\in\rho^{-1}(E)$. Since $\rho|_{\mathcal{G}^{(0)}}$ is measure class preserving, the set $\rho^{-1}(E)$ is non-negligible in $\mathcal{G}^{(0)}$, so the statement of the proposition follows by contraposition. 
\end{proof}

As a first immediate corollary of Proposition \ref{Prop: quotients under finite-to one maps remain CD}, we deduce that the Choquet--Deny property of a groupoid is inherited by its equivalence relation.

\begin{corollary}\label{Corollary: CD passes to eq rel} Suppose that $(\mathcal{G},\mu)$ is a Choquet--Deny discrete measured groupoid. Then its associated equivalence relation is Choquet--deny and hence has finite orbits $\mu$-almost everywhere. \end{corollary}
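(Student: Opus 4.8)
The plan is to realize $\mathcal{R}$ as a quotient of $\mathcal{G}$ to which Proposition~\ref{Prop: quotients under finite-to one maps remain CD} directly applies, and then to feed the conclusion into Theorem~\ref{Thm: CD if and only if finite orbits}.

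First I would consider the canonical map $\rho:\mathcal{G}\to\mathcal{R}$, $g\mapsto(t(g),s(g))$. It is Borel because the source and target maps are Borel, and it is surjective by the very definition of the associated equivalence relation $\mathcal{R}$. It is a groupoid homomorphism: if $(g,h)\in\mathcal{G}^{(2)}$, i.e.\ $s(g)=t(h)$, then $\rho(g)$ and $\rho(h)$ are composable in $\mathcal{R}$ and $\rho(gh)=(t(g),s(h))=\rho(g)\rho(h)$, and similarly $\rho$ sends inverses to inverses and units to units. Next I would verify the hypotheses of Proposition~\ref{Prop: quotients under finite-to one maps remain CD}: the restriction $\rho|_{\mathcal{G}^{(0)}}$ is the identity map on $X=\mathcal{G}^{(0)}$, hence measure class preserving and, being injective, countable-to-one; and for each $x\in\mathcal{G}^{(0)}$ the restriction $\rho|_{\mathcal{G}^{x}}\colon\mathcal{G}^{x}\to\mathcal{R}^{x}$ is surjective, since any $(x,z)\in\mathcal{R}^{x}$ is of the form $(t(g),s(g))$ for some $g\in\mathcal{G}$, and then $g\in\mathcal{G}^{x}$ with $\rho(g)=(x,z)$. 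Applying the proposition with $\mathcal{H}=\mathcal{R}$ and the same measure $\mu$ then yields that $(\mathcal{R},\mu)$ is Choquet--Deny. Finally, Theorem~\ref{Thm: CD if and only if finite orbits} applied to $\mathcal{R}$ gives that $\mu$-almost all $\mathcal{R}$-orbits are finite.

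I do not expect a genuine obstacle here: the argument is a direct check of the hypotheses of the preceding proposition. The only point requiring a word of care is that $(\mathcal{R},\mu)$ must itself be a discrete measured groupoid for the phrase ``Choquet--Deny'' to be meaningful, which is the standard fact — recalled in Section~\ref{preliminaries} — that the equivalence relation associated with a discrete measured groupoid is a non-singular countable Borel equivalence relation.
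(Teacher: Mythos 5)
Your proposal is correct and follows exactly the paper's own argument: the paper likewise applies Proposition~\ref{Prop: quotients under finite-to one maps remain CD} to the map $g\mapsto(t(g),s(g))$ and then invokes Theorem~\ref{Thm: CD if and only if finite orbits}. Your version simply spells out the verification of the hypotheses (surjectivity on target fibers, the restriction to the unit space being the identity) that the paper leaves implicit.
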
 
\begin{proof}
Let $\mathcal{R}$ be the equivalence relation associated with $(\mathcal{G},\mu)$. Note that the map $G\to\mathcal{R}$ defined by $g\mapsto(t(g),s(g))$ is a surjective groupoid homomorphism. Hence, by Proposition \ref{Prop: quotients under finite-to one maps remain CD} the equivalence relation $\mathcal{R}$ is Choquet--Deny. The finiteness condition follows from Theorem \ref{Thm: CD if and only if finite orbits}.
\end{proof}
As a second implication of Proposition \ref{Prop: quotients under finite-to one maps remain CD}, we obtain a describtion of when a discrete measured field of groups is Choquet--Deny.

\begin{corollary}\label{Corollary: bundle of groups CD} A discrete measured field of groups $(\mathcal{G},\mu)$ is Choquet--Deny if and only if for $\mu$-almost every $x\in\mathcal{G}^{(0)}$ the isotropy group $\mathcal{G}_{x}$ is Choquet--Deny. \end{corollary}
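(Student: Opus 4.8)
The plan is to prove the two implications separately; the ``if'' direction is essentially immediate from unwinding the definitions, while the ``only if'' direction combines the $FC_{\alpha}$-machinery of Corollary \ref{Corollary: N_alpha is eventually FCH} with Theorem \ref{theorem: icc field of groups} and Proposition \ref{Prop: quotients under finite-to one maps remain CD}.

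For the ``if'' direction, I would assume that $\mathcal{G}_{x}$ is Choquet--Deny as a group for $\mu$-almost every $x\in\mathcal{G}^{(0)}$ and take an arbitrary non-degenerate invariant Markov operator $P$ on $L^{\infty}(\mathcal{G},\mu_{s})$. The point is that for a field of groups one has $\mathcal{G}^{x}=\mathcal{G}_{x}$ for every $x$, and the fiber operator $P_{x}\in\mathcal{B}(\ell^{\infty}(\mathcal{G}_{x}))$ is precisely the random-walk operator $P_{x}f(g)=\sum_{h\in\mathcal{G}_{x}}(d_{x}P)(h)\,f(gh)$ associated with the step distribution $\mu_{x}:=d_{x}P\in\text{Prob}(\mathcal{G}_{x})$; the convolution formula for $d_{x}P^{n}$ recalled in Subsection \ref{MeasuredGroupoids} shows $d_{x}P^{n}=\mu_{x}^{\ast n}$, so non-degeneracy of $P$ says exactly that $\bigcup_{n}\supp(\mu_{x}^{\ast n})=\mathcal{G}_{x}$, i.e. $\mu_{x}$ is non-degenerate on the group $\mathcal{G}_{x}$. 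Hence, for $\mu$-almost every $x$, the pair $(\mathcal{G}_{x},\mu_{x})$ is Liouville, so $H^{\infty}(\mathcal{G}_{x},P_{x})$ is trivial and $P_{x}$ is Liouville; thus $P$ is fiberwise Liouville and $\mathcal{G}$ is Choquet--Deny.

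For the ``only if'' direction, assume $(\mathcal{G},\mu)$ is Choquet--Deny. By Corollary \ref{Corollary: N_alpha is eventually FCH} there is an ordinal $\alpha<\aleph_{1}$ such that the quotient $\mathcal{G}_{\alpha}=\mathcal{G}/FC_{\alpha}(\mathcal{G})$ is an icc discrete measured field of groups with $(\mathcal{G}_{\alpha})_{x}=\mathcal{G}_{x}/FCH(\mathcal{G}_{x})$ for $\mu$-almost every $x$. The quotient map $\rho\colon\mathcal{G}\to\mathcal{G}_{\alpha}$ is a surjective Borel groupoid homomorphism that is the identity on the common unit space $\mathcal{G}^{(0)}=\mathcal{G}_{\alpha}^{(0)}$ (so in particular measure class preserving and countable-to-one) and restricts on each fiber to the surjective quotient homomorphism $\mathcal{G}_{x}\to(\mathcal{G}_{\alpha})_{x}$; hence Proposition \ref{Prop: quotients under finite-to one maps remain CD} applies and $\mathcal{G}_{\alpha}$ is Choquet--Deny. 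Now Theorem \ref{theorem: icc field of groups} forbids a non-trivial icc discrete measured field of groups from being Choquet--Deny, so the icc, Choquet--Deny groupoid $\mathcal{G}_{\alpha}$ must be trivial, i.e. $(\mathcal{G}_{\alpha})_{x}$ is the trivial group for $\mu$-almost every $x$. Combining this with $(\mathcal{G}_{\alpha})_{x}=\mathcal{G}_{x}/FCH(\mathcal{G}_{x})$ yields $\mathcal{G}_{x}=FCH(\mathcal{G}_{x})$ almost everywhere, i.e. $\mathcal{G}_{x}$ is FC-hypercentral, which by \cite[Theorem~1]{Frisch2019} is equivalent to $\mathcal{G}_{x}$ being Choquet--Deny.

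The mathematical content being essentially carried by the three cited results, the main thing to get right is the bookkeeping in the ``only if'' direction: verifying that $\mathcal{G}_{\alpha}$ is genuinely a discrete measured field of groups (standardness of the quotient Borel structure via \cite[Theorem 3.1]{Sutherland85}, and $\mu_{s}=\mu_{t}$ inherited since $s=t$ on a field of groups), that the hypotheses of Proposition \ref{Prop: quotients under finite-to one maps remain CD} are satisfied by $\rho$, and that ``$\mathcal{G}_{\alpha}$ icc and non-trivial'' is exactly the configuration excluded by Theorem \ref{theorem: icc field of groups}. Everything else is a direct translation between the groupoid definitions and the classical group-theoretic notions.
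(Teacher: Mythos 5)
Your proposal is correct and follows essentially the same route as the paper: the ``if'' direction is the same direct unwinding of the definitions (fiber operators of a field of groups are exactly group random walks), and the ``only if'' direction uses the same three ingredients --- Corollary \ref{Corollary: N_alpha is eventually FCH}, Proposition \ref{Prop: quotients under finite-to one maps remain CD}, and Theorem \ref{theorem: icc field of groups} --- with the paper merely phrasing it contrapositively where you argue directly. Your version is if anything slightly more explicit about non-degeneracy of the fiber measures and about verifying the hypotheses of the quotient proposition, both of which the paper leaves implicit.
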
 
\begin{proof}
First assume that there exists a non-negligible subset $E\subseteq\mathcal{G}^{(0)}$ such that $\mathcal{G}^{x}$ is not Choquet--Deny for all $x\in E$. Let $\alpha<\aleph_{1}$ be such that $\mathcal{G}/FC_{\alpha}(\mathcal{G})$ is an icc groupoid (see Corollary \ref{Corollary: N_alpha is eventually FCH}) and suppose that $F\subseteq\mathcal{G}^{(0)}$ is a co-null set for which $FC_{\alpha}(\mathcal{G})_{x}=FCH(\mathcal{G}_{x})$ for all $x\in F$. Then for all $x\in E\cap F$ the quotient $(\mathcal{G}/FC_{\alpha}(\mathcal{G}))_{x}$ is non-trivial and in particular not Choquet--Deny. With Proposition \ref{Prop: quotients under finite-to one maps remain CD}, it thus follows that $\mathcal{G}$ is not Choquet--Deny.

Conversely, suppose that there is a co-null set $F\subseteq\mathcal{G}^{(0)}$ for which $\mathcal{G}_{x}$ is Choquet--Deny for all $x\in F$ and let $(\pi_{g})_{g\in G}\subseteq\text{Prob}(\mathcal{G})$ be any invariant Borel family of probability measures on $\mathcal{G}$. By assumption, $\pi_{x}\in\text{Prob}(\mathcal{G}_{x})$ is Liouville for all $x\in E$. Thus, the family $(\pi_{g})_{g\in G}$ is fiberwise Liouville.
\end{proof}
As a third corollary, we deduce that the Choquet--Deny property is also inherited by the isotropy subgroupoid.

\begin{corollary}\label{Corollary: CD passes to Iso} Suppose that $(\mathcal{G},\mu)$ is a Choquet--Deny discrete measured groupoid. Then $\text{Iso}(\mathcal{G})$ is Choquet--Deny, and hence $\mathcal{G}_{x}^{x}$ is Choquet--Deny for $\mu$-almost every $x\in\mathcal{G}^{(0)}$. \end{corollary}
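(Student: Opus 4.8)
The plan is to realize $\text{Iso}(\mathcal{G})$, after restricting its base to a transversal and rescaling the measure, as a quotient of $\mathcal{G}$ to which Proposition~\ref{Prop: quotients under finite-to one maps remain CD} applies, and then to conclude via Corollary~\ref{Corollary: bundle of groups CD}. First I would invoke Corollary~\ref{Corollary: CD passes to eq rel}: since $\mathcal{G}$ is Choquet--Deny, its associated equivalence relation $\mathcal{R}$ on $X := \mathcal{G}^{(0)}$ has finite orbits $\mu$-almost everywhere, so after restricting to a co-null $\mathcal{R}$-invariant Borel subset of $X$ we may assume $\mathcal{R}$ is of type I. By Theorem~\ref{Theorem: type I eq rel} it then admits a Borel selector $r\colon X \to X$; let $T := \{x \in X : r(x) = x\}$ be the corresponding Borel transversal and equip it with the pushforward probability measure $\bar{\mu} := r_{*}\mu$. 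Then $\mathcal{G}|_{T} = \text{Iso}(\mathcal{G})|_{T}$, and $(\mathcal{G}|_{T}, \bar{\mu})$ is a discrete measured field of groups with fibres $\mathcal{G}_{x}^{x}$, $x \in T$ (the requirement $\mu_{s} \sim \mu_{t}$ being automatic here, since $s = t$ on a field of groups).

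Next I would construct the quotient map. Applying the Lusin--Novikov theorem to the countable-to-one Borel surjection $\mathcal{G} \to \mathcal{R}$, $g \mapsto (t(g), s(g))$, and composing a Borel section with $x \mapsto (r(x), x)$, one obtains a Borel map $c\colon X \to \mathcal{G}$ with $s(c(x)) = x$ and $t(c(x)) = r(x)$, which, after modification on $T$, satisfies $c(x) = e_{x}$ for $x \in T$. Define $\phi\colon \mathcal{G} \to \mathcal{G}|_{T}$ by $\phi(g) := c(t(g))\, g\, c(s(g))^{-1}$. A direct computation shows that $\phi$ is a well-defined Borel groupoid homomorphism onto $\mathcal{G}|_{T}$, that $\phi|_{\mathcal{G}^{(0)}} = r$ is countable-to-one and, since $\bar{\mu} = r_{*}\mu$, measure class preserving, and that $\phi|_{\mathcal{G}^{x}}\colon \mathcal{G}^{x} \to (\mathcal{G}|_{T})^{r(x)} = \mathcal{G}_{r(x)}^{r(x)}$ is surjective for every $x \in X$, since each $\gamma \in \mathcal{G}_{r(x)}^{r(x)}$ is the image of $c(x)^{-1}\gamma\, c(x) \in \mathcal{G}_{x}^{x} \subseteq \mathcal{G}^{x}$. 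Hence Proposition~\ref{Prop: quotients under finite-to one maps remain CD} shows that $(\mathcal{G}|_{T}, \bar{\mu})$ is Choquet--Deny.

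Finally I would transfer this conclusion back. By Corollary~\ref{Corollary: bundle of groups CD}, $\mathcal{G}_{x}^{x}$ is Choquet--Deny for $\bar{\mu}$-almost every $x \in T$; since $\mathcal{G}_{x}^{x} \cong \mathcal{G}_{r(x)}^{r(x)}$ for every $x \in X$ (conjugate by any arrow from $r(x)$ to $x$) and $\bar{\mu} = r_{*}\mu$, the $\mathcal{R}$-invariant set of $x \in X$ for which $\mathcal{G}_{x}^{x}$ is not Choquet--Deny is contained in the $r$-preimage of a $\bar{\mu}$-null subset of $T$, hence is $\mu$-null. Thus $\mathcal{G}_{x}^{x}$ is Choquet--Deny for $\mu$-almost every $x$, and applying Corollary~\ref{Corollary: bundle of groups CD} once more, now to the discrete measured field of groups $\text{Iso}(\mathcal{G})$, gives that $\text{Iso}(\mathcal{G})$ is Choquet--Deny. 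I expect the main point to be the construction and verification of the homomorphism $\phi$ that conjugates every arrow of $\mathcal{G}$ into the transversal, together with the routine but necessary bookkeeping ensuring that passing to the transversal equipped with the pushforward measure is harmless for the almost-everywhere statements.
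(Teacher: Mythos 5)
Your proposal is correct and follows the same strategy as the paper: both proofs use Corollary~\ref{Corollary: CD passes to eq rel} to get finite orbits, pass to a transversal/fundamental domain $Y$ of the (type I) equivalence relation, exhibit $\mathrm{Iso}(\mathcal{G})|_{Y}$ as a quotient of $\mathcal{G}$ so that Proposition~\ref{Prop: quotients under finite-to one maps remain CD} applies, and finish with Corollary~\ref{Corollary: bundle of groups CD}. The only real difference is in how the quotient homomorphism is built: the paper first invokes Proposition~\ref{Proposition: semi-direct product groupoid} to write $\mathcal{G}=\Gamma\rtimes_{\delta}\mathcal{R}$ and then defines $\pi(g,(y,x))=\delta_{(E(x),x)}(g)$, whereas you construct a Borel section $c(x)\colon x\to r(x)$ of arrows directly via Lusin--Novikov and conjugate, $\phi(g)=c(t(g))\,g\,c(s(g))^{-1}$; the two constructions are morally identical, but yours bypasses the appeal to amenability/treeability underlying the semi-direct product decomposition, which is a mild simplification.
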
 
\begin{proof}
Denote the equivalence relation associated with $\mathcal{G}$ by $\mathcal{R}$. By Corollary \ref{Corollary: CD passes to eq rel}, it follows that $\mathcal{R}$ has finite orbits $\mu$-almost everywhere. We may assume that $\mathcal{G}$ is amenable and by Proposition \ref{Proposition: semi-direct product groupoid} we can write $\mathcal{G}$ as a semi-direct product of the form $\mathcal{G}=\Gamma\rtimes_{\delta}\mathcal{R}$ where $\Gamma=(\mathcal{G}_{x}^{x})_{x\in\mathcal{G}^{(0)}}$ is the isotropy bundle of $\mathcal{G}$. Since $\mathcal{R}$ is of type I, we can find a Borel selector $E:\mathcal{G}^{(0)}\to\mathcal{G}^{(0)}$ as in Theorem \ref{Theorem: type I eq rel}. Let $Y:=E(\mathcal{G}^{(0)})$ be the fundamental domain. Since $Y$ is Borel, the restriction $\Gamma|_{Y}$ is still a discrete Borel field of groups. We construct a quotient map $\pi:\mathcal{G}\to\Gamma|_{Y}$ by setting $\pi(g,(y,x))=\delta_{(E(x),x)}(g)$ where $g\in\Gamma$ and $(y,x)\in\mathcal{R}$. It is clear that $\pi$ is surjective and the restriction of $\pi$ to $\mathcal{G}^{(0)}$ is a finite-to-one map $\mathcal{G}^{(0)}\to Y$. Also, $\pi$ is surjective when restricted to target fibers. We show that $\pi$ is a groupoid homomorphism. To see that multiplication is preserved, we calculate 
\begin{align*}
\pi\left((h,(z,y))(g,(y,x))\right) & =\pi\left((\delta_{(x,y)}(h)\cdot g,(z,x))\right)\\
 & =\delta_{(E(x),x)}(\delta_{(x,y)}(h)\cdot g)\\
 & =\delta_{(E(x),y)}(h)\cdot\delta_{(E(x),x)}(g)\\
 & =\delta_{(E(y),y)}(h)\cdot\delta_{(E(x),x)}(g)\\
 & =\pi\left((h,(z,y))\right)\cdot\pi\left((g,(y,x))\right)
\end{align*}
for $g,h\in\Gamma$, $x,y,z\in\mathcal{G}^{(0)}$. Similarly, we see that the inverse is preserved via
\begin{align*}
\pi\left((g,(y,x))^{-1}\right) & =\pi\left((\delta_{(y,x)}(g^{-1}),(x,y))\right)\\
 & =\delta_{(E(y),y)}((\delta_{(y,x)}(g^{-1}))\\
 & =\delta_{(E(y),x)}(g^{-1})\\
 & =\delta_{(E(x),x)}(g)^{-1} \\
 & =\pi\left((g,(y,x))\right)^{-1} \,.
\end{align*}
Hence, by Proposition \ref{Prop: quotients under finite-to one maps remain CD} it follows that $\Gamma|_{Y}$ is Choquet--Deny. By Corollary \ref{Corollary: bundle of groups CD} it furthermore follows that $\Gamma_{y}$ is Choquet--Deny for $\mu$-almost every $y\in Y$. Since $Y$ is a fundamental domain for $\mathcal{G}^{(0)}$, we obtain that $\mathcal{G}_{x}^{x}$ is Choquet--Deny for $\mu$-almost every $x\in\mathcal{G}^{(0)}$.
\end{proof}
Note that Corollary \ref{Corollary: CD passes to eq rel} and Corollary \ref{Corollary: CD passes to Iso} together imply one direction of Theorem \ref{Main Theorem CD}.

Finally, we can exactly describe the icc groupoids which are Choquet--Deny: they are exactly the finite equivalence relations. The proof of this characterization requires the following lemma.

\begin{lemma} \label{Lemma:IsotropyICC}Let $(\mathcal{G},\mu)$ be an icc discrete measured groupoid and assume that its associated countable Borel equivalence relation has finite orbits $\mu$-almost everywhere. Then the isotropy subgroupoid $\text{Iso}(\mathcal{G})$ is also icc. \end{lemma}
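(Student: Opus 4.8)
The plan is to argue by contradiction, reducing the statement to a fiberwise claim about the isotropy groups. Note first that $\text{Iso}(\mathcal{G})$, equipped with the restriction of $\mu$ to $\mathcal{G}^{(0)}$, is itself a discrete measured field of groups: its source and target maps coincide, so $\mu_s$ and $\mu_t$ agree on it, and its fiber over $x$ is the isotropy group $\mathcal{G}_x^x$. By \cite[Proposition 6.1]{berendschot2024factoriality} it therefore suffices to prove that $\mathcal{G}_x^x$ is an icc group for $\mu$-almost every $x\in\mathcal{G}^{(0)}$. Suppose this fails. Then, again by \cite[Proposition 6.1]{berendschot2024factoriality}, the set of $x\in\mathcal{G}^{(0)}$ for which $\mathcal{G}_x^x$ is \emph{not} icc --- that is, for which some non-trivial $g\in\mathcal{G}_x^x$ has a finite conjugacy class inside $\mathcal{G}_x^x$ --- is non-negligible.

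Next I would extract from this a single Borel bisection of $\mathcal{G}$ witnessing a failure of the icc property, contradicting the hypothesis. Applying Proposition \ref{FCCenterBorel} to the field of groups $\text{Iso}(\mathcal{G})$, the set $FC(\text{Iso}(\mathcal{G}))=\{g\in\text{Iso}(\mathcal{G})\mid \#C_g<\infty\}$ (where $C_g$ denotes the conjugacy class of $g$ in $\mathcal{G}_{s(g)}^{s(g)}$) is Borel, and the cardinality-counting argument in its proof shows moreover that $\{g\in\text{Iso}(\mathcal{G})\mid \#C_g\leq m\}$ is Borel for each $m\in\mathbb{N}$. Since the equivalence relation $\mathcal{R}$ associated with $\mathcal{G}$ has finite orbits $\mu$-almost everywhere, the $\mathcal{R}$-invariant Borel sets $\{x\in\mathcal{G}^{(0)}\mid \#[x]_{\mathcal{R}}\leq n\}$ exhaust $\mathcal{G}^{(0)}$ up to a null set. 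Intersecting the non-negligible set from the first paragraph with these strata and using countable additivity, I obtain integers $m,n$, a non-negligible Borel set $E\subseteq\mathcal{G}^{(0)}$ such that every $\mathcal{R}$-orbit meeting $E$ has at most $n$ elements, and --- via the Lusin--Novikov Theorem applied to the countable-to-one source map on $FC(\text{Iso}(\mathcal{G}))\setminus\mathcal{G}^{(0)}$ --- a Borel section $\sigma\colon E\to FC(\text{Iso}(\mathcal{G}))\setminus\mathcal{G}^{(0)}$ with $\sigma(x)\in\mathcal{G}_x^x$ and $\#C_{\sigma(x)}\leq m$ for every $x\in E$.

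I would then put $A:=\sigma(E)$. Since $\sigma$ is an injective Borel section with $\sigma(x)\neq x$, the set $A$ is a Borel bisection contained in $\text{Iso}(\mathcal{G})\setminus\mathcal{G}^{(0)}$ with $\mu_s(A)=\mu(E)>0$, so it suffices to check that $\mu_s(\Omega_A)<\infty$, where $\Omega_A=\bigcup_{g\in\mathcal{G}}gAg^{-1}$. Fix $y\in\mathcal{G}^{(0)}$. Unwinding the definition, $\Omega_A\cap\mathcal{G}_y^y=\{\,g\,\sigma(x)\,g^{-1}\mid x\in E,\ g\in\mathcal{G},\ s(g)=x,\ t(g)=y\,\}$, which is empty unless $y$ is $\mathcal{R}$-equivalent to some $x\in E$; there are at most $n$ such $x$. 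For each of them, fix one arrow $g_0$ from $x$ to $y$; every arrow from $x$ to $y$ is then of the form $kg_0$ with $k\in\mathcal{G}_y^y$, so $\{\,g\,\sigma(x)\,g^{-1}\mid g\colon x\to y\,\}$ is exactly the conjugacy class in $\mathcal{G}_y^y$ of $h_0:=g_0\sigma(x)g_0^{-1}$, and since $k\mapsto g_0 k g_0^{-1}$ is a group isomorphism $\mathcal{G}_x^x\to\mathcal{G}_y^y$, that class has $\#C_{\sigma(x)}\leq m$ elements. Hence $\#(\Omega_A\cap\mathcal{G}_y^y)\leq nm$ for every $y$, and so $\mu_s(\Omega_A)=\int_{\mathcal{G}^{(0)}}\#(\Omega_A\cap\mathcal{G}_y^y)\,d\mu(y)\leq nm<\infty$. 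This contradicts that $(\mathcal{G},\mu)$ is icc, and we conclude that $\mathcal{G}_x^x$ is icc for $\mu$-almost every $x$, i.e., that $\text{Iso}(\mathcal{G})$ is icc.

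The algebraic heart of the argument --- that $\mathcal{G}$-conjugation transports an isotropy element across its finite $\mathcal{R}$-orbit and, over each base point, sweeps out only a single $\text{Iso}(\mathcal{G})$-conjugacy class of a transported element --- is short, and it is exactly here that the finite-orbit hypothesis enters. The step I expect to require the most care is the Borel bookkeeping in the second paragraph: verifying that the relevant subsets of $\mathcal{G}$ are Borel and that a Borel section $\sigma$ can be chosen to respect both the conjugacy-class bound and the orbit-size bound simultaneously. This is precisely the kind of measurable-selection argument already carried out via the Lusin--Novikov Theorem in Proposition \ref{FCCenterBorel} and Lemma \ref{Lemma: Switching elements are Borel}, so I would present it by appealing to those techniques rather than reproving them in detail.
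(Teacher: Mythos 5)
Your proof is correct and takes essentially the same route as the paper's: both argue by contraposition, choose a Borel section of non-trivial isotropy elements with uniformly bounded finite conjugacy classes over a non-negligible set on which the orbit sizes are also uniformly bounded, and show that the resulting bisection $A$ satisfies $\mu_{s}(\Omega_{A})<\infty$. Your version merely spells out the Borel bookkeeping and the $\#(\Omega_{A}\cap\mathcal{G}_{y}^{y})\leq nm$ count that the paper leaves implicit.
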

\begin{proof}
We argue by contraposition. Suppose that $(\mathcal{G},\mu)$ is a discrete measured groupoid for which the associated countable Borel equivalence relation has finite orbits $\mu$-almost everyhwere. Furthermore, let  $X\subseteq\mathcal{G}^{(0)}$ be a non-negligable Borel subset such that for every $x\in X$ the isotropy group $\mathcal{G}_{x}^{x}$ is not icc. By restricting $X$ if necessary, we can assume that there is bound both on the size of the equivalence classes of $x\in X$, as well as on the size of the smallest non-trivial finite conjugacy class in $\mathcal{G}_{x}^{x}$. Choose a section $\rho:X\to\text{Iso}(\mathcal{G}|_{X})$ such that $\rho(x)$ is non-trivial and has finite conjugacy class of minimal size in $\mathcal{G}_{x}^{x}$. Define $A:=\rho(X)$. Then $A\cap\mathcal{G}^{(0)}=\emptyset$ and $\mu_{s}\left(\bigcup_{g\in\mathcal{G}}gAg^{-1}\right)<\infty$, so that $\mathcal{G}$ is not icc.
\end{proof}
\begin{corollary}\label{corollary: icc CD groupoids} An icc discrete measured groupoid is Choquet--Deny if and only if it is a finite equivalence relation.
\end{corollary}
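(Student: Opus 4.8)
The plan is to establish the two directions separately, the forward direction being essentially an assembly of results already obtained. The ``if'' direction is immediate: a finite equivalence relation is Choquet--Deny by Proposition \ref{Prop: finite equivalence relations are CD}, and it is (vacuously) icc, since its isotropy subgroupoid coincides with the unit space, so the defining condition quantifies over an empty family of bisections.

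For the ``only if'' direction, let $(\mathcal{G},\mu)$ be an icc discrete measured groupoid that is Choquet--Deny, and let $\mathcal{R}$ denote its associated countable Borel equivalence relation. First I would apply Corollary \ref{Corollary: CD passes to eq rel} to conclude that $\mathcal{R}$ has finite orbits $\mu$-almost everywhere. Since $(\mathcal{G},\mu)$ is icc and $\mathcal{R}$ has finite orbits a.e., Lemma \ref{Lemma:IsotropyICC} then yields that the isotropy subgroupoid $\Iso(\mathcal{G})$ is itself icc. On the other hand, Corollary \ref{Corollary: CD passes to Iso} shows that $\Iso(\mathcal{G})$ is Choquet--Deny. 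Thus $\Iso(\mathcal{G})$, equipped with the induced $\sigma$-finite measured structure, is a discrete measured field of groups that is simultaneously icc and Choquet--Deny.

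At this point the contrapositive of Theorem \ref{theorem: icc field of groups} forces $\Iso(\mathcal{G})$ to be trivial, i.e.\ $\mathcal{G}_x^x=\{x\}$ for $\mu$-almost every $x\in\mathcal{G}^{(0)}$. After discarding a $\mu$-null set we may therefore assume $\Iso(\mathcal{G})=\mathcal{G}^{(0)}$, so that $\mathcal{G}$ is a non-singular countable Borel equivalence relation; combined with the first step it has finite orbits $\mu$-almost everywhere, hence is a finite equivalence relation, as desired. I do not anticipate a genuine obstacle here, since every ingredient is already in hand; the only points requiring care are the bookkeeping of $\mu$-null sets and the (standard) observation that a discrete measured groupoid with $\mu$-a.e.\ trivial isotropy is, in the measured category, an equivalence relation --- together with noting that $\Iso(\mathcal{G})$ carries a canonical discrete measured field-of-groups structure, as is already used implicitly in Lemma \ref{Lemma:IsotropyICC} and Corollary \ref{Corollary: CD passes to Iso}.
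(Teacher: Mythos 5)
Your proposal is correct and follows essentially the same route as the paper's proof: Corollary \ref{Corollary: CD passes to eq rel} gives finite orbits, Lemma \ref{Lemma:IsotropyICC} gives that $\Iso(\mathcal{G})$ is icc, Corollary \ref{Corollary: CD passes to Iso} gives that it is Choquet--Deny, and Theorem \ref{theorem: icc field of groups} then forces the isotropy to be trivial. The only difference is that you also spell out the (trivial) ``if'' direction, which the paper leaves implicit.
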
 
\begin{proof}
Suppose that $(\mathcal{G},\mu)$ is an icc discrete measured groupoid that is Choquet--Deny. By Corollary \ref{Corollary: CD passes to eq rel} its equivalence relation is Choquet--Deny and hence has finite orbits $\mu$-almost everywhere. It follows from Lemma \ref{Lemma:IsotropyICC} that $\text{Iso}(\mathcal{G})$ is also icc, i.e., $\mathcal{G}_{x}^{x}$ is an icc group for $\mu$-almost every $x\in\mathcal{G}^{(0)}$. Since $\text{Iso}(\mathcal{G})$ is also Choquet--Deny by Corollary \ref{Corollary: CD passes to Iso}, it then follows from Theorem \ref{theorem: icc field of groups} that $\mathcal{G}_{x}^{x}$ is trivial for $\mu$-almost every $x\in\mathcal{G}^{(0)}$. But this precisely means that $(\mathcal{G},\mu)$ is an equivalence relation. 
\end{proof}

\subsubsection{Markov chains from measured groupoids} \label{GroupoidMarkovChains}

For this subsection, we fix a countable Borel groupoid $\mathcal{G}$ and an invariant Markov operator $P$ on $\mathcal{G}$ that is induced by a Borel family of probability measures $(\pi_{g})_{g\in\mathcal{G}}$. For fixed $x\in\mathcal{G}^{(0)}$ define a family of $\sigma$-algebras $(\mathcal{F}_{i})_{i\in\mathbb{N}}$ on $\Omega:=(\mathcal{G}^{x})^{\mathbb{N}}$ via 
\[
\mathcal{F}_{i}:=\sigma\left(\{\omega\in\Omega\mid \omega_{0}=g_{0},\ldots,\omega_{i}=g_{i}\}\mid g_{0},\ldots,g_{i}\in\mathcal{G}^{x}\right)
\]
and set $\mathcal{F}:=\sigma(\bigcup_{i\in\mathbb{N}}\mathcal{F}_{i})$. Note that $\mathcal{F}_{0}\subseteq\mathcal{F}_{1}\subseteq\ldots\subseteq\mathcal{F}$, that is $(\mathcal{F}_{i})_{i\in\mathbb{N}}$ is a filtration. As $\mathcal{G}^{x}$ is countable, by the Daniell-Kolmogorov Extension Theorem \cite[Theorem 1.2.1]{Yadin23}, any probability measure on the measurable space $(\Omega,\mathcal{F})$ is uniquely determined by its values on the \emph{cylinder sets} $\mathcal{Z}(g_0,\ldots, g_i) := \{\omega\in\Omega\mid \omega_{0}=g_{0},\ldots,\omega_{i}=g_{i}\}$ for $i\in\mathbb{N}$ and $g_{0},\ldots,g_{i}\in\mathcal{G}^{x}$. We equip $(\Omega,\mathcal{F})$ with the probability measure $\mu_x$ given by 
\[
\mu_x(\mathcal{Z}(g_0,\ldots,g_i)):=\delta_{x,g_{0}}\prod_{j=0}^{i-1}\pi_{s(g_{j})}(g_{j}^{-1}g_{j+1})\,.
\]
and define a Markov chain $(X_{i})_{i\in\mathbb{N}}$ with transition matrix 
\[
P_{x}:=(\pi_{s(g)}(g^{-1}h))_{g,h\in\mathcal{G}^{x}}
\]
by the projection maps $X_{i}:(\Omega,\mathcal{F},\mu_{x})\rightarrow\mathcal{G}^{x}$, $ \omega \mapsto \omega_{i}$.

The following statement immediately follows from the definitions.

\begin{lemma} \label{IrreducibleEquivalence} 
Let $x \in \cG^{(0)}$. The Markov operator $P_x$ defined as above is irreducible if and only if the Markov chain $(X_{i})_{i\in\mathbb{N}}$ is irreducible.
\end{lemma}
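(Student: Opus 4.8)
The plan is to unwind both notions to the single classical assertion that the stochastic matrix $P_x = (\pi_{s(g)}(g^{-1}h))_{g,h \in \mathcal{G}^x}$ is irreducible, i.e. that for all $g,h \in \mathcal{G}^x$ there is some $i \in \mathbb{N}$ with $(P_x^i)_{g,h} > 0$, and then to read off the lemma from the elementary equivalence recorded in Subsection \ref{MarkovChains}.

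First I would record that the transition matrix $P_x$ is genuinely the stochastic matrix attached to the groupoid Markov operator $P_x$ on $\ell^\infty(\mathcal{G}^x)$: starting from $P_x f(g) = \sum_{h' \in \mathcal{G}^{s(g)}} \pi_{s(g)}(h') f(gh')$ and substituting $h = gh'$, one uses that left multiplication by $g$ is a bijection $\mathcal{G}^{s(g)} \to \mathcal{G}^{t(g)} = \mathcal{G}^x$, with inverse $h \mapsto g^{-1}h$ (well defined since $t(g) = t(h) = x$, so $g^{-1}h \in \mathcal{G}^{s(g)}$), to rewrite $P_x f(g) = \sum_{h \in \mathcal{G}^x} \pi_{s(g)}(g^{-1}h) f(h)$. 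Thus the two objects denoted $P_x$ share the state space $\mathcal{G}^x$ and the same entries.

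Next I would compute cylinder-set probabilities. By the defining formula for the measure (in its obvious form $\mu_g$ for an arbitrary starting point $g \in \mathcal{G}^x$, obtained by replacing $\delta_{x,g_0}$ with $\delta_{g,g_0}$), a finite path $g_0 = g, g_1, \ldots, g_i = h$ in $\mathcal{G}^x$ satisfies $\mu_g(\mathcal{Z}(g_0, \ldots, g_i)) = \prod_{j=0}^{i-1} \pi_{s(g_j)}(g_j^{-1}g_{j+1}) = \prod_{j=0}^{i-1} (P_x)_{g_j, g_{j+1}}$, which is strictly positive precisely when every transition entry along the path is positive; summing over all length-$i$ paths from $g$ to $h$, a positive-probability path exists for some $i$ if and only if $(P_x^i)_{g,h} > 0$ for some $i$. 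By the definition of irreducibility of a Markov chain in Subsection \ref{MarkovChains}, the condition ``for all $g,h$ such a path exists'' is exactly irreducibility of the chain $(X_i)_{i\in\mathbb{N}}$, and ``$(P_x^i)_{g,h}>0$ for some $i$, for all $g,h$'' is exactly irreducibility of the matrix $P_x$; combining the two gives the lemma.

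There is no serious obstacle here: everything is bookkeeping with the definitions, which is why the authors call the statement immediate. The one point deserving a moment's care is the reindexing $h' \mapsto gh'$ in the first step, where one must make sure the chain's state space is all of $\mathcal{G}^x$ rather than merely the set of states reachable from the unit $x$; once the two incarnations of $P_x$ are identified, the equivalence follows verbatim from the ``equivalently'' clause in Subsection \ref{MarkovChains}.
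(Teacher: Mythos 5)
Your argument is correct and is exactly the definitional unwinding the paper has in mind: the paper offers no proof, remarking only that the statement ``immediately follows from the definitions,'' and your identification of the operator $P_x$ with the stochastic matrix $(\pi_{s(g)}(g^{-1}h))_{g,h\in\mathcal{G}^x}$ together with the product formula for cylinder-set probabilities is precisely the bookkeeping that remark elides. Your closing caveat about the state space being all of $\mathcal{G}^x$ (and the need for the measures $\mu_g$ at arbitrary starting points $g$, or equivalently the observation that positivity of a path's probability depends only on the matrix entries) is the right point to flag, and you resolve it correctly.
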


As in the group setting, the notion of martingales is intimately connected to that of harmonic functions (see e.g. \cite[Exercise 2.16]{Yadin23}), as the following proposition illustrates.

\begin{proposition} \label{HarmonicMartingale} 
Assume that the Markov operator $P$ is non-degenerate and let $x\in\mathcal{G}^{(0)}$. A function $f\in\ell^{\infty}(\mathcal{G}^{x})$ is contained in $H^{\infty}(\mathcal{G}^{x},P_{x})$ (i.e., satisfies $P_{x}f=f$) if and only if $(f\circ X_{i})_{i\in\mathbb{N}}$ is a martingale with respect to the filtration $(\mathcal{F}_{i})_{i\in\mathbb{N}}$ as above. \end{proposition}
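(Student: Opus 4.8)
The plan is to verify the two defining conditions of a martingale directly, reducing the conditional-expectation identity to a single elementary computation on cylinder sets. First I would dispatch the easy part: since $f\in\ell^{\infty}(\mathcal{G}^{x})$ and $X_{i}$ is the $i$-th coordinate projection, each $f\circ X_{i}$ is bounded — hence $\mu_{x}$-integrable — and $\mathcal{F}_{i}$-measurable, so $(f\circ X_{i})_{i\in\mathbb{N}}$ is automatically adapted to $(\mathcal{F}_{i})_{i\in\mathbb{N}}$. Because $\mathcal{G}^{x}$ is countable, $\mathcal{F}_{i}$ is the atomic $\sigma$-algebra whose atoms are the countably many cylinder sets $\mathcal{Z}(g_{0},\ldots,g_{i})$; therefore two bounded $\mathcal{F}_{i}$-measurable functions agree $\mu_{x}$-almost surely precisely when they have equal integral over every cylinder set of positive measure. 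Since $f\circ X_{i}$ is itself $\mathcal{F}_{i}$-measurable, the identity $\mathbb{E}_{\mu_{x}}[f\circ X_{i+1}\mid\mathcal{F}_{i}]=f\circ X_{i}$ is equivalent to
\[
\mathbb{E}_{\mu_{x}}\!\left[(f\circ X_{i+1})\,1_{\mathcal{Z}(g_{0},\ldots,g_{i})}\right]=\mathbb{E}_{\mu_{x}}\!\left[(f\circ X_{i})\,1_{\mathcal{Z}(g_{0},\ldots,g_{i})}\right]
\]
for every $i\in\mathbb{N}$ and all $g_{0},\ldots,g_{i}\in\mathcal{G}^{x}$, and both directions of the proposition will follow once I show that this identity is in turn equivalent to $P_{x}f(g_{i})=f(g_{i})$.

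Next I would carry out that computation. Writing $Z:=\mathcal{Z}(g_{0},\ldots,g_{i})$, the right-hand side equals $f(g_{i})\mu_{x}(Z)$ because $X_{i}\equiv g_{i}$ on $Z$. For the left-hand side I would partition $Z\cap\{X_{i+1}=h\}=\mathcal{Z}(g_{0},\ldots,g_{i},h)$ over $h\in\mathcal{G}^{x}$, use the formula $\mu_{x}(\mathcal{Z}(g_{0},\ldots,g_{i},h))=\mu_{x}(Z)\,\pi_{s(g_{i})}(g_{i}^{-1}h)$ for the cylinder measure, and invoke boundedness of $f$ to interchange the countable sum with the integral, obtaining $\mathbb{E}_{\mu_{x}}[(f\circ X_{i+1})1_{Z}]=\mu_{x}(Z)\sum_{h\in\mathcal{G}^{x}}f(h)\,\pi_{s(g_{i})}(g_{i}^{-1}h)$. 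The reindexing $h=g_{i}k$ is a bijection of $\mathcal{G}^{x}$ onto $\mathcal{G}^{s(g_{i})}$ under which the last sum becomes $\sum_{k\in\mathcal{G}^{s(g_{i})}}\pi_{s(g_{i})}(k)f(g_{i}k)=P_{x}f(g_{i})$ by the definition of $P_{x}$. This yields the desired equivalence.

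Assembling the pieces: if $P_{x}f=f$, then $P_{x}f(g_{i})=f(g_{i})$ for every $g_{i}\in\mathcal{G}^{x}$, so the cylinder-set identity holds for all $i$ and all choices of $g_{0},\ldots,g_{i}$, whence $(f\circ X_{i})_{i\in\mathbb{N}}$ is a martingale. Conversely, if $(f\circ X_{i})_{i\in\mathbb{N}}$ is a martingale, the identity forces $P_{x}f(g)=f(g)$ for every $g\in\mathcal{G}^{x}$ that occurs as the last coordinate of some positive-measure cylinder set $\mathcal{Z}(x,g_{1},\ldots,g_{n-1},g)$. The last step, where the hypothesis of non-degeneracy is used — and the only point in the argument that is not purely formal — is to see that every $g\in\mathcal{G}^{x}$ is of this form: non-degeneracy yields $n$ with $d_{x}P^{n}(g)>0$, and since $d_{x}P^{n}(g)=\mu_{x}(X_{n}=g)$ expands as a sum over length-$n$ paths from $x$ to $g$ of products of one-step transition probabilities, at least one such path carries strictly positive weight, i.e.\ determines a positive-measure cylinder set ending at $g$. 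Hence $P_{x}f=f$ on all of $\mathcal{G}^{x}$, that is $f\in H^{\infty}(\mathcal{G}^{x},P_{x})$. I expect this final reduction from ``harmonic along positive-probability paths'' to ``harmonic everywhere'' to be the only mild obstacle; the rest is bookkeeping around the definitions of the cylinder measure and of $P_{x}$.
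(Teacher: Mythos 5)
Your proposal is correct and follows essentially the same route as the paper's proof: both directions reduce to the identity $\mu_{x}(\mathcal{Z}(g_{0},\ldots,g_{i},h))=\mu_{x}(\mathcal{Z}(g_{0},\ldots,g_{i}))\,\pi_{s(g_{i})}(g_{i}^{-1}h)$ together with the observation that non-degeneracy makes every $g\in\mathcal{G}^{x}$ the endpoint of a positive-measure path from $x$. Your uniform reduction to cylinder-set atoms is a slightly tidier packaging of the same computation the paper carries out with the events $F_{g}^{(i)}=\{\omega\mid\omega_{i}=g\}$.
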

\begin{proof}
First note that for $f\in\ell^{\infty}(\mathcal{G}^{x})$, the family $(f\circ X_{i})_{i\in\mathbb{N}}$ is adapted to the filtration $(\mathcal{F}_{i})_{i\in\mathbb{N}}$. For the ``if'' direction suppose that $(f\circ X_{i})_{i\in\mathbb{N}}$ is a martingale with respect to $(\mathcal{F}_{i})_{i\in\mathbb{N}}$, i.e., $\mathbb{E}_{\mu_{z}}[|f\circ X_{i}|]<\infty$ and $\mathbb{E}_{\mu_{z}}[f\circ X_{i+1}\mid\mathcal{F}_{i}]=f\circ X_{i}$ $\mu_{x}$-almost surely for all $i\in\mathbb{N}$. Then, 
\[
\mathbb{E}_{\mu_{x}}[(f\circ X_{i+1})\cdot1_{F}]=\mathbb{E}_{\mu_{x}}[\mathbb{E}_{\mu_{x}}[f\circ X_{i+1}\mid\mathcal{F}_{i}]\cdot1_{F}]=\mathbb{E}_{\mu_{x}}[(f\circ X_{i})\cdot1_{F}]
\]
for every $F\in\mathcal{F}_{i}$. Since $P$ is non-degenerate, for fixed $g\in G$ we find $i\geq1$ with $\mu_{x}(F_{g}^{(i)})>0$, where $F_{g}^{(i)}:=\{\omega\in\Omega\mid \omega_{i}=g\}\in\mathcal{F}_{i}$. We thus obtain 
\begin{align*}
f(g) & = \frac{1}{\mu_{x}(F_{g}^{(i)})}\int_{F_{g}^{(i)}}f\circ X_{i}\;d\mu_{x}\\
 & = \frac{1}{\mu_{x}(F_{g}^{(i)})}\int_{F_{g}^{(i)}}f\circ X_{i+1}\;d\mu_{x}\\
 & = \frac{1}{\mu_{x}(F_{g}^{(i)})}\sum_{h\in\mathcal{G}^{x}}\mu_{x}(F_{g}^{(i)} \cap F_{h}^{(i+1)})f(h)\\
 & = \sum_{h\in\mathcal{G}^{x}}\pi_{s(g)}(g^{-1}h)f(h)\,.
\end{align*}
It follows that $f\in H^{\infty}(\mathcal{G}^{x},P_{x})$.

For the ``only if'' direction assume that $f\in H^{\infty}(\mathcal{G}^{x},P_{x})$. For any $i\in\mathbb{N}$, $g\in \cG^x$ we have 
\[
\mathbb{E}_{\mu_{x}}[|f\circ X_{i}|]=\int_{\Omega}|f\circ X_{i}|\;d\mu_{x}=\sum_{g\in \cG^x}\mu_{x}(F_{g}^{(i)})|f(g)|<\infty\,,
\]
where $F_{g}^{(i)}$ is defined as before. It remains to show that for all $i \in \mathbb{N}$, $\mathbb{E}_{\mu_{x}}[f\circ X_{i+1}\mid\mathcal{F}_{i}]=f\circ X_{i}$ $\mu_{x}$-almost surely. Indeed, for $F\in\mathcal{F}_{i}$, 
\begin{align*}
\mathbb{E}_{\mu_{x}}[(f\circ X_{i+1})\cdot1_{F}]  &= \int_{F}f\circ X_{i+1} \; d\mu_{x}\\
  &= \sum_{g\in\mathcal{G}^{x}}\mu_{x}(F\cap F_{g}^{(i+1)})f(g)\\
  &= \sum_{g\in\mathcal{G}^{x}}\sum_{h\in\mathcal{G}^{x}}\mu_{x}(F\cap F_{h}^{(i)}\cap F_{g}^{(i+1)})f(g) \,. 
\end{align*}
Note that as $F$ is the disjoint union of cylinder sets in $\mathcal{F}_i$, we have the identity 
\begin{align*}
    \mu_{x}(F\cap F_{h}^{(i)}\cap F_{g}^{(i+1)}) = \mu_{x}(F\cap F_{h}^{(i)})\pi_{s(h)}(h^{-1}g)\,.
\end{align*}
It therefore follows that 
\begin{align*} 
\mathbb{E}_{\mu_{x}}[(f\circ X_{i+1})\cdot1_{F}]   &= \sum_{g\in\mathcal{G}^{x}}\sum_{h\in\mathcal{G}^{x}}\mu_{x}(F\cap F_{h}^{(i)})\pi_{s(h)}(h^{-1}g)f(g)\\
  &= \sum_{h\in\mathcal{G}^{x}}\mu_{x}(F\cap F_{h}^{(i)})f(h)\\
  &= \int_{F}f\circ X_{i} \;d\mu_{x}\\
  &= \mathbb{E}_{\mu_{x}}[(f\circ X_{i})\cdot1_{F}]\,.\qedhere
\end{align*}
\end{proof}
For fixed $x\in\mathcal{G}^{(0)}$ assume that $\#s(\mathcal{G}^{x})<\infty$ and define the \emph{return time} $T$ to $\cG_x^x$ by 
\[
T: (\Omega, \mathcal{F}, \mu_x) \to \mathbb{N} \cup \{\infty\},\; \omega \mapsto \inf\{i\geq1\mid X_i(\omega) \in\mathcal{G}_{x}^{x}\}\;.
\]

\begin{lemma} \label{ReturnTime} Let $\cG$ be a countable Borel groupoid and let $x \in \cG^{(0)}$ be such that $\#s(\cG^x) < \infty$. The return time $T$ to $\cG_x^x$ defines a stopping time for $(\mathcal{F}_{i})_{i\in\mathbb{N}}$ such that $T< \infty$ $\mu$-almost surely. \end{lemma}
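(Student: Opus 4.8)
The assertion that $T$ is a stopping time is immediate from the definition: for $i\in\N$ one has
\[
T\inv(\{i\})=\{\omega\in\Omega\mid X_1(\omega)\notin\cG_x^x,\ \ldots,\ X_{i-1}(\omega)\notin\cG_x^x,\ X_i(\omega)\in\cG_x^x\}
\]
(with $T\inv(\{0\})=\emptyset$), and since $\cG^x$ is countable this event lies in $\mathcal{F}_i$. The content of the lemma is the finiteness of $T$, and the plan is to project the walk $(X_i)_{i\in\N}$ on $\cG^x$ onto the (finite) orbit of $x$. Set $S:=s(\cG^x)$; this is precisely the $\cR$-class of $x$, hence finite by hypothesis, and $s(\cG^y)=S$ for every $y\in S$. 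I would then study the $S$-valued process $Y_i:=s(X_i)$.

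First I would check that $(Y_i)_{i\in\N}$ is a Markov chain on $S$ in the sense of Subsection~\ref{MarkovChains}, with starting point $x$ and transition matrix $Q(y,y'):=\pi_y(\{k\in\cG^y\mid s(k)=y'\})$, the push-forward of $d_yP$ along $s$. This rests on the invariance identity $\pi_g=g\cdot\pi_{s(g)}$: conditionally on $X_i=g$ the law of $g\inv X_{i+1}$ is $\pi_{s(g)}\in\text{Prob}(\cG^{s(g)})$, which depends on $g$ only through $s(g)$; and since left multiplication by $g$ restricts, for each $y'\in S$, to a bijection $\{k\in\cG^{s(g)}\mid s(k)=y'\}\to\{h\in\cG^x\mid s(h)=y'\}$, grouping the cylinder probabilities defining $\mu_x$ according to the fibres of $s$ collapses them to a product of the $Q(y_j,y_{j+1})$.

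Then I would verify that $Q$ is irreducible on the finite set $S$, and here I would use that $P$ is non-degenerate. Given $y,y'\in S$, choose $k\in\cG^y$ with $s(k)=y'$, which is possible since $y'$ lies in the $\cR$-class of $y$; non-degeneracy of $P$ at $y$ provides $n\in\N$ with $d_yP^n(k)>0$, and projecting along $s$ yields $Q^n(y,y')\geq d_yP^n(k)>0$. Thus all states of $S$ communicate, and an irreducible Markov chain on a finite state space is recurrent (see e.g.\ \cite[Chapter~6]{Lalley23}), so the first return time $\tau:=\inf\{i\geq1\mid Y_i=x\}$ of $(Y_i)_{i\in\N}$ to $x$ satisfies $\tau<\infty$ $\mu_x$-almost surely.

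Finally, since $X_i\in\cG^x$ for every $i$ we have $t(X_i)=x$, so $Y_i=x$ forces $X_i\in\cG_x^x$; in particular $X_\tau\in\cG_x^x$, whence $T\leq\tau<\infty$ $\mu_x$-almost surely. The only genuinely delicate point is the first step — that the shadow process $(Y_i)_{i\in\N}$ is honestly a Markov chain — which hinges on the invariance identity together with the fibre-bijection bookkeeping in the cylinder-set sum; everything else is standard finite Markov chain theory, with non-degeneracy entering only to guarantee that $Q$ has a single communicating class.
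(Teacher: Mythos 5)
Your proof is correct, but it takes a genuinely different route from the paper's. The paper works directly with the walk $(X_i)_{i\in\mathbb{N}}$ on $\cG^x$: it introduces the remainder terms $R_n(z)=\mu_z(\{\omega\mid \omega_0,\dots,\omega_n\notin\cG_x^z\})$, uses the factorization of cylinder measures to derive $\mu_x(T>K)\leq \mu_x(T>k)\cdot\alpha_k$ with $\alpha_k=\max_{z\in s(\cG^x)}R_k(z)$, and iterates to get the geometric tail bound $\mu_x(T>2^nk)\leq\alpha_k^{n+1}$; irreducibility supplies, for each $z$, some $k$ with $R_k(z)<1$, and finiteness of $s(\cG^x)$ upgrades this to a uniform $\alpha_k<1$. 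You instead lump the walk onto the finite orbit $S=s(\cG^x)$ via $Y_i:=s(X_i)$, verify that this shadow process is a Markov chain with matrix $Q$ (the key point, which you correctly reduce to the invariance identity $\pi_g=g\cdot\pi_{s(g)}$ together with the source-preserving bijection $h\mapsto g^{-1}h$, so that the conditional law of $s(X_{i+1})$ given $X_i=g$ depends on $g$ only through $s(g)$), deduce irreducibility of $Q$ from non-degeneracy of $P$ via $Q^n(y,y')\geq d_yP^n(k)$, and then invoke recurrence of finite irreducible chains; since $t(X_i)=x$ for all $i$, the event $\{Y_i=x\}$ coincides with $\{X_i\in\cG_x^x\}$ and $T$ is exactly the return time of $(Y_i)$ to $x$. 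Your argument is more conceptual and delegates the quantitative work to standard finite-state Markov chain theory, while the paper's is self-contained and produces an explicit exponential decay of the tail of $T$ (which is more information than mere almost-sure finiteness). Note that both arguments use non-degeneracy of $P$ in an essential way even though the lemma as stated omits this hypothesis (the paper's proof silently invokes ``irreducibility of $P$''); your write-up at least makes explicit where it enters. A final small point in your favour: since $X_0=x\in\cG_x^x$ almost surely, the exclusion in the description of $T^{-1}(\{i\})$ must start at $X_1$, as you have it, rather than at $X_0$.
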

\begin{proof}
For $i\in\mathbb{N}$ we have that 
\begin{align*}
T^{-1}(\{i\}) & = \{\omega\in\Omega\mid X_{0}(\omega),\ldots,X_{i-1}(\omega)\notin\mathcal{G}_{x}^{x}\text{ and }X_{i}(\omega)\in\mathcal{G}_{x}^{x}\}\\
 & = \bigsqcup_{(g_{0},\ldots,g_{i-1})\in(\mathcal{G}^{x}\setminus\mathcal{G}_{x}^{x})^{i},g_{i}\in\mathcal{G}_{x}^{x}}\mathcal{Z}(g_0,\ldots, g_i)\\
 & \in \mathcal{F}_{i}\,,
\end{align*}
so that $T$ is indeed a stopping time. To prove that $T < \infty$ $\mu$-almost surely, fix $k\in\mathbb{N}$ and $K>k$. We have 
\begin{align*}
 \mu_{x}(\{\omega \in\Omega\mid T(\omega)>K\})
 & =  \sum_{g_{0},\ldots,g_{K}\in\mathcal{G}^{x}\setminus\mathcal{G}_{x}^{x}}\mu_{x}(\mathcal{Z}(g_0,\ldots,g_K))\,. 
 \end{align*} 
Note by definition of the measures $(\mu_z)_{z\in\cG^{(0)}}$ that we have the decomposition 
\begin{align*}
     \mu_x(\mathcal{Z}(g_0,\ldots,g_K)) = \mu_x(\mathcal{Z}(g_0,\ldots, g_k))\mu_{s(g_k)}(\mathcal{Z}(g_k^{-1}g_k,\ldots, g_k^{-1}g_K))
\end{align*}
for all $g_0,\ldots, g_K \in \cG^x$. In particular, if $g_i \not\in \cG_x^x$ for all $i$ then $g_k^{-1}g_j \not\in \cG^{s(g_k)}_x$ for all $j$. For $n \geq 1$ and $z \in \cG^{(0)}$, define the remainder term
\begin{align*}
    R_n(z) &= \mu_z(\{\omega \in (\cG^z)^\mathbb{N} \mid \omega_0,\ldots, \omega_n \not\in \cG^z_x\}) \\
    &= \sum_{h_0,\ldots, h_n \in \cG^z \setminus \cG_x^z} \mu_z(\mathcal{Z}(h_0,\ldots,h_n)) \,.
\end{align*}
Rearranging the summation gives us the bound 
\begin{align*}
    \mu_x(\{\omega \in \Omega \mid T(\omega) > K\}) &= \sum_{g_0,\ldots, g_{K-k} \in \cG^x \setminus \cG_x^x}  \mu_x(\mathcal{Z}(g_0,\ldots,g_k)) \cdot R_{k}(s(g_{K-k}))\\
    &\leq \sum_{g_0,\ldots, g_{K-k} \in \cG^x \setminus \cG_x^x} \mu_x(\mathcal{Z}(g_0,\ldots,g_k)) \cdot \alpha_k  
\end{align*}
where $\alpha_k := \max_{z \in s(\cG^x)} R_k(z)$. This establishes the inequality 
\begin{align*}
    \mu_x(\{\omega \in \Omega \mid T(\omega) > K \}) \leq \mu_x(\{\omega \in \Omega \mid T(\omega) > k\}) \cdot \alpha_k \,. 
\end{align*}
Since the above inequality holds for arbitrary $K > k$, it follows by induction on $n$ that 
\begin{align*}
    \mu_x(\{\omega \in \Omega \mid T(\omega) > 2^nk\}) \leq \alpha_k^{n+1} 
\end{align*}
for all $n,k \in \mathbb{N}$. By irreducibility of $P$, for each $z \in s(\cG^x)$, there must be some $k$ for which have $R_k(z) < 1$. Since $\# s(\cG^x) < \infty$, we can choose $k$ large enough so that we have the bound $\alpha_k <1$. Letting $n$ tend to infinity tells us $T < \infty$ $\mu$-almost surely. 
\end{proof}
\begin{proposition}
\label{RestrictionHarmonic} 
For every function $f\in H^{\infty}(\mathcal{G}^{x},P_{x})$ the restriction to $\mathcal{G}_{x}^{x}$ is $\nu$-harmonic, where $\nu\in\text{Prob}(\mathcal{G}_{x}^{x})$ is the \emph{hitting measure} given by 
\[
\nu(g):=\mu_{x}(\{\omega\in\Omega \mid X_T(\omega)=g\})
\]
for $g\in\mathcal{G}_{x}^{x}$. 
\end{proposition}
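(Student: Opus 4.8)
The plan is to reduce the $\nu$-harmonicity of $f|_{\mathcal{G}_x^x}$ at a general point $g\in\mathcal{G}_x^x$ to a single application of Doob's optional stopping formula (Theorem \ref{DoobTheorem}) to the martingale supplied by Proposition \ref{HarmonicMartingale}, after translating $f$ by $g$. The key point is that $P_x$ is equivariant under left translations by the isotropy group $\mathcal{G}_x^x$: since $s(g)=t(h)=x$ whenever $g\in\mathcal{G}_x^x$ and $h\in\mathcal{G}^x$, left multiplication $h\mapsto gh$ is a bijection of $\mathcal{G}^x$, and using $s(gh)=s(h)$ one reads off directly from $P_x F(h)=\sum_{k\in\mathcal{G}^{s(h)}}(d_{s(h)}P(k))F(hk)$ that $P_x\bigl(F\circ(g\,\cdot\,)\bigr)=(P_xF)\circ(g\,\cdot\,)$ for every $F\in\ell^\infty(\mathcal{G}^x)$.

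First I would fix $g\in\mathcal{G}_x^x$ and define $f_g\in\ell^\infty(\mathcal{G}^x)$ by $f_g(h):=f(gh)$. By the equivariance above, $P_xf_g=(P_xf)\circ(g\,\cdot\,)=f_g$, so $f_g\in H^\infty(\mathcal{G}^x,P_x)$ with $\|f_g\|_\infty=\|f\|_\infty$. Proposition \ref{HarmonicMartingale} then shows that $(f_g\circ X_i)_{i\in\mathbb{N}}$ is a martingale with respect to the filtration $(\mathcal{F}_i)_{i\in\mathbb{N}}$. Since this martingale is uniformly bounded by $\|f\|_\infty$, it is uniformly integrable and $\mathbb{E}_{\mu_x}[|f_g\circ X_T|]\leq\|f\|_\infty<\infty$. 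By Lemma \ref{ReturnTime} the return time $T$ is a stopping time for $(\mathcal{F}_i)_{i\in\mathbb{N}}$ with $T<\infty$ $\mu_x$-almost surely, so Theorem \ref{DoobTheorem} applies and gives $\mathbb{E}_{\mu_x}[f_g\circ X_0]=\mathbb{E}_{\mu_x}[f_g\circ X_T]$.

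It remains to unwind both sides. As $X_0=x$ $\mu_x$-almost surely, the left-hand side equals $f_g(x)=f(gx)=f(g)$. On $\{T<\infty\}$ one has $X_T\in\mathcal{G}_x^x$ by the definition of $T$, so the right-hand side equals $\sum_{h\in\mathcal{G}_x^x}\mu_x(\{X_T=h\})f_g(h)=\sum_{h\in\mathcal{G}_x^x}\nu(h)f(gh)$; in particular (from $T<\infty$ a.s. together with $X_T\in\mathcal{G}_x^x$) the measure $\nu$ really is a probability measure supported on $\mathcal{G}_x^x$. Combining the two computations yields $f(g)=\sum_{h\in\mathcal{G}_x^x}\nu(h)f(gh)$ for every $g\in\mathcal{G}_x^x$, which is precisely the assertion that $f|_{\mathcal{G}_x^x}$ is $\nu$-harmonic. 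The step requiring the most care is the left-equivariance of $P_x$ under $\mathcal{G}_x^x$, together with verifying that all hypotheses of Theorem \ref{DoobTheorem} are genuinely in force — finiteness of $T$ rests on Lemma \ref{ReturnTime} and hence on the standing assumptions $\#s(\mathcal{G}^x)<\infty$ and non-degeneracy of $P$, while uniform integrability is automatic since $f$ is bounded; the remainder is bookkeeping.
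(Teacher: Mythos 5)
Your proposal is correct and follows essentially the same route as the paper: one application of Doob's optional stopping formula at the base point combined with the left-translation invariance of $H^{\infty}(\mathcal{G}^{x},P_{x})$ under $\mathcal{G}_{x}^{x}$ (the paper applies Doob to $f$ first and then translates, you translate first and apply Doob to each $f_g$ --- a trivial reordering). You additionally spell out the equivariance $P_x\bigl(F\circ(g\,\cdot\,)\bigr)=(P_xF)\circ(g\,\cdot\,)$ via $s(gh)=s(h)$, which the paper only asserts implicitly.
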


\begin{proof}
Let $f\in H^{\infty}(\mathcal{G}^{x},P_{x})$. As we have seen, $T$ defines a stopping time for $(\mathcal{F}_{i})_{i\in\mathbb{N}}$ such that $T < \infty$ $\mu$-almost surely and $(f\circ X_{i})_{i\in\mathbb{N}}$ is a martingale. By the boundedness of $f$, the family $(f\circ X_{i})_{i\in\mathbb{N}}$ is uniformly integrable with $\mathbb{E}_{\mu_{x}}[|f\circ X_T|]<\infty$. We can therefore apply Doob's optional stopping formula in Theorem \ref{DoobTheorem} to obtain 
\begin{eqnarray}
f(x) & = & \int_{\Omega}f\left(X_T(\omega)\right)d\mu_{x}(\omega)\nonumber \\
 & = & \sum_{h\in\mathcal{G}_{x}^{x}}\mu_{x}(\{\omega\in\Omega\mid X_T(\omega)=h\})f(h)\nonumber \\
 & = & \sum_{h\in\mathcal{G}_{x}^{x}}\nu(h)f(h)\,.\label{eq:IdentityAtX}
\end{eqnarray}
Since the space $H^{\infty}(\mathcal{G}^{x},P_{x})$ is invariant under the natural action of $\mathcal{G}_{x}^{x}$, we deduce with Equation (\ref{eq:IdentityAtX}) that 
\[
f(g)=(g^{-1}\cdot f)(x)=\sum_{h\in\mathcal{G}_{x}^{x}}\nu(h)(g^{-1}\cdot f)(h)=\sum_{h\in\mathcal{G}_{x}^{x}}\nu(h)f(gh)
\]
for all $f\in H^{\infty}(\mathcal{G}^{x},P_{x})$, $g\in\mathcal{G}_{x}^{x}$. This implies that for every function $f\in H^{\infty}(\mathcal{G}^{x},P_{x})$ the restriction to $\mathcal{G}_{x}^{x}$ is $\nu$-harmonic. 
\end{proof}
\begin{lemma} \label{HittingMeasureIrreducible} Assume that the Markov operator $P$ is non-degenerate. Then the hitting measure $\nu\in\text{Prob}(\mathcal{G}_{x}^{x})$ defined in Proposition \ref{RestrictionHarmonic} is also non-degenerate. \end{lemma}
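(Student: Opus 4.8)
\emph{Approach.} Recall that $\nu$ is non-degenerate exactly when $\supp(\nu)$ generates $\mathcal{G}_{x}^{x}$ as a semigroup, so the goal is to write an arbitrary $g\in\mathcal{G}_{x}^{x}$ as a finite product of elements of $\supp(\nu)$. The key mechanism, immediate from the invariance of $P$, is that for any $c\in\mathcal{G}_{x}^{x}$ left translation $h\mapsto ch$ is a bijection of $\mathcal{G}^{x}$ which restricts to a bijection of $\mathcal{G}_{x}^{x}$ and of $\mathcal{G}^{x}\setminus\mathcal{G}_{x}^{x}$, and which preserves the transition weights: $\pi_{s(ch)}\big((ch)^{-1}(ch')\big)=\pi_{s(h)}(h^{-1}h')$ for all $h,h'\in\mathcal{G}^{x}$. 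Hence translating a finite path in $\mathcal{G}^{x}$ by $c^{-1}$ preserves both the positivity of every one-step transition probability and the set of times at which the path lies in $\mathcal{G}_{x}^{x}$. I would begin by noting that a straightforward induction identifies $d_{x}P^{m}(g)$ with the total $\mu_{x}$-weight $\sum_{g_{1},\dots,g_{m-1}\in\mathcal{G}^{x}}\mu_{x}(\mathcal{Z}(x,g_{1},\dots,g_{m-1},g))$ of all length-$m$ paths from $x$ to $g$; so non-degeneracy of $P$ provides, for any fixed $g\in\mathcal{G}_{x}^{x}$ with $g\neq x$, some $m\geq 1$ and a path $x=g_{0},g_{1},\dots,g_{m}=g$ in $\mathcal{G}^{x}$ with $\pi_{s(g_{l})}(g_{l}^{-1}g_{l+1})>0$ for every $l$.

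\emph{Excursion decomposition.} Next I would let $0=i_{0}<i_{1}<\dots<i_{r}=m$ enumerate the indices $i$ with $g_{i}\in\mathcal{G}_{x}^{x}$; this list is non-empty and $r\geq 1$ since $g_{0}=x$ and $g_{m}=g$ qualify. For each $j<r$, translate the segment $g_{i_{j}},\dots,g_{i_{j+1}}$ by $g_{i_{j}}^{-1}\in\mathcal{G}_{x}^{x}$. By the invariance above, the sequence $x=g_{i_{j}}^{-1}g_{i_{j}},\,g_{i_{j}}^{-1}g_{i_{j}+1},\dots,g_{i_{j}}^{-1}g_{i_{j+1}}$ has all one-step transition probabilities positive, its interior entries avoid $\mathcal{G}_{x}^{x}$ (because $g_{i_{j}+1},\dots,g_{i_{j+1}-1}\notin\mathcal{G}_{x}^{x}$, $i_{j}$ and $i_{j+1}$ being consecutive visit indices, and left translation by $g_{i_{j}}^{-1}$ preserves $\mathcal{G}_{x}^{x}$), and its last entry $g_{i_{j}}^{-1}g_{i_{j+1}}$ lies in $\mathcal{G}_{x}^{x}$. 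Consequently the cylinder set $\mathcal{Z}(x,g_{i_{j}}^{-1}g_{i_{j}+1},\dots,g_{i_{j}}^{-1}g_{i_{j+1}})$ has positive $\mu_{x}$-measure and is contained in $\{\omega\in\Omega\mid T(\omega)=i_{j+1}-i_{j},\ X_{T}(\omega)=g_{i_{j}}^{-1}g_{i_{j+1}}\}$, so $g_{i_{j}}^{-1}g_{i_{j+1}}\in\supp(\nu)$.

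\emph{Conclusion.} Telescoping and using that $g_{i_{0}}=x$ is the identity of $\mathcal{G}_{x}^{x}$,
\[
g=g_{i_{r}}=\big(g_{i_{0}}^{-1}g_{i_{1}}\big)\big(g_{i_{1}}^{-1}g_{i_{2}}\big)\cdots\big(g_{i_{r-1}}^{-1}g_{i_{r}}\big),
\]
which exhibits $g$ as a product of $r$ elements of $\supp(\nu)$. The remaining case $g=x$ is vacuous if $\mathcal{G}_{x}^{x}$ is trivial, and otherwise follows by taking a nontrivial $h\in\supp(\nu)$ — whose inverse is a product of elements of $\supp(\nu)$ by the above — and writing $x=hh^{-1}$. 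Thus $\supp(\nu)$ generates $\mathcal{G}_{x}^{x}$ as a semigroup, i.e.\ $\nu$ is non-degenerate. I expect the only genuinely delicate point to be the bookkeeping in the excursion step: one must check that translating a first-return excursion by an element of $\mathcal{G}_{x}^{x}$ yields \emph{another} first-return excursion, not merely a path with the same transition weights — and this is exactly what the invariance noted in the first paragraph supplies. Everything else is a routine unwinding of the definitions.
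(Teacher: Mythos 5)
Your proof is correct and follows essentially the same route as the paper's: both take a positive-probability path from $x$ to $g$ supplied by non-degeneracy, split it at its visits to $\mathcal{G}_x^x$, and use left-translation invariance of the transition weights to see that each excursion increment $g_{i_j}^{-1}g_{i_{j+1}}$ lies in $\operatorname{supp}(\nu)$. Your explicit treatment of the identity element and of the ``first-return'' bookkeeping is a slightly more careful write-up of the same argument.
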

\begin{proof}
It suffices to show that every element $g\in\mathcal{G}_{x}^{x}$ can be written as a product of elements in the support of $\nu$. Since $P$ is non-degenerate, by Lemma \ref{IrreducibleEquivalence} we can find a sequence $g_{0}=x,\ldots,g_{i}=g$ in $\mathcal{G}^{x}$ with 
\[
\mu_{x}(\mathcal{Z}(g_0,\ldots,g_i))>0\,.
\]
Define $J:=\{0\leq j\leq i\mid g_{j}\in\mathcal{G}_{x}^{x}\}$ and let $0=j_{0}<j_{1}<\ldots<j_{k}=i$ be the elements in $J$. For every $1\leq l\leq k$ set $u_{l}:=g_{j_{l-1}}^{-1}g_{j_{l}}\in\mathcal{G}_{x}^{x}$. Then, $g=u_{1}\ldots u_{k}$ and 
\begin{align*}
\nu(u_{l}) & =  \mu_{x}(\{\omega\in\Omega\mid X_T(\omega)=u_{l}\})\\
 & \geq \mu_{x}(\mathcal{Z}(x,g_{j_{l-1}}^{-1}g_{j_{l-1}+1},\ldots, g_{j_{l-1}}^{-1}g_{j_l})) \\
 & =  \mu_x(\mathcal{Z}(g_{j_{l-1}},g_{j_{l-1}+1},\ldots, g_{j_l}))\\
 & \geq \mu_x(\mathcal{Z}(g_0,\ldots,g_{j_{l-1}-1}))\mu_x(\mathcal{Z}(g_{j_{l-1}},g_{j_{l-1}+1},\ldots, g_{j_l}))\mu_x(\mathcal{Z}(g_{j_l+1},\ldots,g_i)) \\
 &= \mu_{x}\left(\mathcal{Z}(g_0,\ldots,g_i) \right)\\
 & >  0\,.
\end{align*}
It follows that $\nu$ is non-degenerate. 
\end{proof}
The discussion above implies the following statement, which gives the remaining implication in the characterization of Choquet--Deny groupoids in Theorem \ref{Main Theorem CD}. The proof is inspired by \cite[Proposition 3.4]{MeyerovitchYadin16} (see also \cite[Theorem 3.9.7]{Yadin23}).

\begin{theorem} \label{IsotropyToGroupoid} Let $(\mathcal{G},\mu)$ be a discrete measured groupoid. If the countable Borel equivalence relation associated with $(\mathcal{G},\mu)$ has finite orbits $\mu$-almost everywhere and if $\mu$-almost all isotropy groups are Choquet--Deny, then $(\mathcal{G},\mu)$ is Choquet--Deny as well. \end{theorem}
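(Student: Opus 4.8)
The plan is to reduce the fiberwise Liouville property of an arbitrary non-degenerate invariant Markov operator on $(\mathcal{G},\mu)$ to the Choquet--Deny property of the isotropy groups $\mathcal{G}_x^x$, using the machinery of the preceding subsection. By Lemma \ref{One-shotMixing} it suffices to treat an invariant Markov operator $P$ with $\text{supp}(d_x P) = \mathcal{G}^x$ for almost every $x$; in particular each $P_x$ is irreducible, hence by Lemma \ref{IrreducibleEquivalence} the associated Markov chain $(X_i)_{i\in\mathbb{N}}$ on $\mathcal{G}^x$ is irreducible. First I would fix a co-null Borel set $X_0 \subseteq \mathcal{G}^{(0)}$ on which simultaneously: the equivalence class of $x$ is finite (so $\#s(\mathcal{G}^x) < \infty$), the isotropy group $\mathcal{G}_x^x$ is Choquet--Deny, and the Borel measurability hypotheses underlying the constructions of Lemmas \ref{ReturnTime}--\ref{HittingMeasureIrreducible} are in force. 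This uses Corollary \ref{Corollary: CD passes to eq rel}-type finiteness only as a hypothesis here, so it is immediate.

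Next, fix such an $x \in X_0$ and let $f \in H^\infty(\mathcal{G}^x, P_x)$. Since $\#s(\mathcal{G}^x) < \infty$, Lemma \ref{ReturnTime} applies: the return time $T$ to $\mathcal{G}_x^x$ is a stopping time with $T < \infty$ $\mu_x$-almost surely. Proposition \ref{RestrictionHarmonic} then shows that the restriction $f|_{\mathcal{G}_x^x}$ is $\nu$-harmonic, where $\nu$ is the hitting measure on $\mathcal{G}_x^x$, and Lemma \ref{HittingMeasureIrreducible} guarantees $\nu$ is non-degenerate. Because $\mathcal{G}_x^x$ is Choquet--Deny as a group, every bounded $\nu$-harmonic function on it is constant; in particular $f$ is constant on $\mathcal{G}_x^x$, with value $f(x)$. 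Now I would use irreducibility to propagate this: given any $g \in \mathcal{G}^x$, pick a finite path $x = g_0, g_1, \dots, g_i = g$ with positive $\mu_x$-measure cylinder set, and argue by induction along the path that $f(g_j) = f(x)$ for all $j$. The inductive step is where the harmonicity equation $P_x f = f$ combined with constancy on each fiber-coset must be exploited: at a vertex $g_j$ with $s(g_j) = y$, harmonicity expresses $f(g_j)$ as a $d_y P$-average of $f$ over $g_j \mathcal{G}^y$; one uses that $f$ restricted to $g_j \mathcal{G}_y^y$ is constant (by applying the isotropy argument at $y$, translated by $g_j$) and an extremal/pigeonhole argument as in Proposition \ref{Prop: finite equivalence relations are CD}, together with the fact that the equivalence class is finite, to conclude $f$ is constant on the full fiber $\mathcal{G}^x$.

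The main obstacle I expect is precisely this last propagation step: knowing that $f$ is constant on each isotropy coset $g\mathcal{G}_{s(g)}^{s(g)}$ inside $\mathcal{G}^x$, one must upgrade to global constancy of $f$ on $\mathcal{G}^x$, and this requires genuinely using both the finiteness of the orbit and the harmonicity of $f$ — not just the isotropy input. The cleanest way is probably to consider the function $\bar f$ on the (finite) orbit of $x$ obtained by pushing $f$ forward along a choice of coset representatives, observe that $\bar f$ inherits harmonicity for an induced finite Markov operator on the orbit, and invoke Proposition \ref{Prop: finite equivalence relations are CD} (or redo its pigeonhole argument) to see $\bar f$ is constant; combined with fiberwise constancy this forces $f$ constant. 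Finally, since this holds for $\mu$-almost every $x$, the operator $P$ is fiberwise Liouville, and as $P$ was an arbitrary (fully supported, hence by Lemma \ref{One-shotMixing} arbitrary non-degenerate) invariant Markov operator, $(\mathcal{G},\mu)$ is Choquet--Deny.
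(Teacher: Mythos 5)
Your proposal is correct and follows the paper's proof exactly up to the last step: reduce to fully supported $P$ via Lemma \ref{One-shotMixing}, restrict to a co-null set of good basepoints, and combine Lemma \ref{ReturnTime}, Proposition \ref{RestrictionHarmonic} and Lemma \ref{HittingMeasureIrreducible} to see that $f|_{\mathcal{G}_x^x}$ is harmonic for a non-degenerate measure on the Choquet--Deny group $\mathcal{G}_x^x$, hence constant, say $\equiv C$. You diverge only in propagating this constancy to all of $\mathcal{G}^x$. The paper does it with a second application of Doob's optional stopping formula: for any $z\in\mathcal{G}^x$ it runs the chain from $z$ under $\mu_z$, stops at the first hitting time $T_z$ of $\mathcal{G}_x^x$ (finite almost surely by the argument of Lemma \ref{ReturnTime}), and gets $f(z)=\mathbb{E}_{\mu_z}[f(X_{T_z})]=C$ outright, using constancy only on the single group $\mathcal{G}_x^x$. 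Your route instead applies the hitting-measure argument at every point $y$ of the finite orbit to the translated harmonic functions $h\mapsto f(gh)$ on $\mathcal{G}^y$, concluding that $f$ is constant on each coset $g\,\mathcal{G}_{s(g)}^{s(g)}$; since any two elements of $\mathcal{G}^x$ with the same source differ by an isotropy element, these cosets are precisely the sets $s^{-1}(y)\cap\mathcal{G}^x$, so $f$ factors through $s$ as a function $\bar f$ on the finite orbit that is harmonic for the induced, fully supported finite Markov chain, hence constant by the maximum principle of Proposition \ref{Prop: finite equivalence relations are CD}. This works, with two caveats. First, your initial sketch of the propagation (induction along a single positive-probability path) is not actually an induction --- knowing that $f(g_j)$ is an average of values of $f$ does not determine $f(g_{j+1})$ --- and must be replaced by the global maximum-principle formulation you correctly identify as the ``cleanest way''. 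Second, your version uses the Choquet--Deny hypothesis at every point of the orbit rather than just at $x$; this is harmless because the isotropy groups along an orbit are conjugate (and the saturation of a null set is null for finite orbits), but the paper's single extra stopping-time computation is shorter and avoids the issue entirely.
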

\begin{proof}
Let $x\in\mathcal{G}^{(0)}$ be an element for which $\mathcal{G}_{x}^{x}$ is Choquet--Deny and whose orbit is finite, i.e., $\#s(\mathcal{G}^{x})<\infty$. It suffices to show that every function $f\in H^{\infty}(\mathcal{G}^{x},P_{x})$ is constant. By Proposition \ref{RestrictionHarmonic} the restriction $f|_{\mathcal{G}_{x}^{x}}$ of such a function is harmonic with respect to the hitting measure $\nu$. By Lemma \ref{HittingMeasureIrreducible}, the measure $\nu$ is non-degenerate, so that our assumption implies $f|_{\mathcal{G}_{x}^{x}}\equiv C$ for some constant $C$. As in the proof of Proposition \ref{RestrictionHarmonic}, an application of Doob's optional stopping formula in Theorem \ref{DoobTheorem} gives 
\[
f(z)=\int_{\Omega}f\left(X_{T_{z}}(\omega)\right)d\mu_{z}(\omega)=\sum_{h\in\mathcal{G}_{x}^{x}}\mu_{z}(\{\omega\in\Omega\mid X_{T_{z}}(\omega)=h\})f(h)=C
\]
for every $z\in\mathcal{G}^{x}$. It follows that $f$ is constant and hence that $(G,\mu)$ is Choquet--Deny. 
\end{proof}

\subsection{Applications} \label{examples}

\subsubsection{The Choquet--Deny property and icc quotients}

Let $(\cG, \mu)$ and $(\cH, \nu)$ be discrete measured groupoids. We say that $\cH$ is a \emph{quotient} of $\cG$ if there exists a surjective Borel groupoid homomorphism $\rho: \cG \to \cH$ as in Proposition \ref{Prop: quotients under finite-to one maps remain CD}.

\begin{proposition}\label{Proposition:CD and icc quotients}
    A discrete measured groupoid is Choquet--Deny if and only if it admits no icc quotients apart from finite equivalence relations.
\end{proposition}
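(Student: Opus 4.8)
The plan is to combine Theorem \ref{Main Theorem CD}, Corollary \ref{corollary: icc CD groupoids}, and the fact (Proposition \ref{Prop: quotients under finite-to one maps remain CD}) that the Choquet--Deny property passes to quotients. For the ``only if'' direction, suppose $(\cG,\mu)$ is Choquet--Deny and let $(\cH,\nu)$ be an icc quotient via a surjective Borel groupoid homomorphism $\rho:\cG\to\cH$ satisfying the hypotheses of Proposition \ref{Prop: quotients under finite-to one maps remain CD}. Then $(\cH,\nu)$ is again Choquet--Deny, and since it is icc, Corollary \ref{corollary: icc CD groupoids} forces $\cH$ to be a finite equivalence relation. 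Hence the only icc quotients of $\cG$ are finite equivalence relations.

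For the ``if'' direction, I would argue by contraposition: suppose $(\cG,\mu)$ is not Choquet--Deny and produce an icc quotient that is not a finite equivalence relation. By Theorem \ref{Main Theorem CD}, at least one of the two conditions fails. If the associated equivalence relation $\cR$ does not have finite orbits $\mu$-almost everywhere, then $\cR$ itself — which is a quotient of $\cG$ via $g\mapsto(t(g),s(g))$ — is an equivalence relation that is not finite; restricting to the non-type-I part and applying the ergodic decomposition (Theorem \ref{Thm: ergodic decomposition theorem}), together with the construction used in Theorem \ref{Thm: CD if and only if finite orbits}, yields an icc equivalence relation quotient that is not finite (recall that an ergodic equivalence relation is icc precisely when it is not of type $\mathrm{I}_n$ for finite $n$; alternatively, one can simply note that an infinite equivalence relation is automatically icc as a groupoid since its isotropy is trivial, so $\cR$ restricted to $X_{\mathrm{fin}}^c$ does the job). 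If instead condition (2) fails — almost all isotropy groups are Choquet--Deny but not $\mu$-almost all, i.e.\ there is a non-negligible set $E$ on which $\cG_x^x$ is not Choquet--Deny — then, using Corollary \ref{Corollary: N_alpha is eventually FCH} applied to the isotropy field (after first passing to the equivalence relation being finite, or handling the two failures jointly), the quotient $\cG\to\cG/FC_\alpha(\cG)$ restricted to the equivalence relation part together with the fibered FC-hypercenter construction produces a non-trivial icc field of groups, hence an icc quotient which is not a finite equivalence relation (a finite equivalence relation has trivial isotropy, whereas this quotient has non-trivial isotropy on a non-negligible set).

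The main obstacle I anticipate is making the ``if'' direction fully rigorous when condition (1) holds but condition (2) fails: one must combine the semi-direct product decomposition (Proposition \ref{Proposition: semi-direct product groupoid}, available since a Choquet--Deny equivalence relation is hyperfinite hence amenable — but here we only know $\cR$ has finite orbits, which is even stronger and gives type I) with the fibered FC-hypercenter machinery of Definition \ref{HypercenterDefinition} and Corollary \ref{Corollary: N_alpha is eventually FCH}, producing the map $\cG \to \cG/FC_\alpha(\cG)$, and then check that this composite is a legitimate quotient in the sense of Proposition \ref{Prop: quotients under finite-to one maps remain CD} (surjective on target fibers, measure-class preserving and countable-to-one on units). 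One also needs the elementary observation that a discrete measured groupoid which is an icc finite equivalence relation is trivial in the relevant sense, and conversely that any icc quotient that is a finite equivalence relation imposes no obstruction — this is precisely the content of Corollary \ref{corollary: icc CD groupoids}, so the ``only if'' direction is essentially immediate once quotients are shown to preserve Choquet--Denyness. The remaining care is purely bookkeeping: ensuring that in each failure case the quotient we exhibit genuinely has infinite orbits or non-trivial isotropy on a non-negligible set, so that it is not a finite equivalence relation.
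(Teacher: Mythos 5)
Your proposal is correct and follows essentially the same route as the paper: the ``only if'' direction is identical (Choquet--Deny passes to quotients, then Corollary \ref{corollary: icc CD groupoids}), and your contrapositive ``if'' direction uses exactly the same ingredients the paper uses directly, namely Theorem \ref{Main Theorem CD}, the associated equivalence relation as an icc quotient, the semi-direct product decomposition with a fundamental domain realizing $\Iso(\cG)|_Y$ as a quotient, and the FC-hypercenter machinery of Corollary \ref{Corollary: N_alpha is eventually FCH}. The only cosmetic difference is that the paper argues the second direction directly (no icc quotients $\Rightarrow$ $\cR$ finite $\Rightarrow$ a.e.\ isotropy group is FC-hypercentral, hence Choquet--Deny), whereas you run the same chain in reverse.
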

\begin{proof}
    Suppose first that $(\cG, \mu)$ is Choquet--Deny and let $(\cH, \nu)$ be an icc quotient. By Proposition \ref{Prop: quotients under finite-to one maps remain CD}, $(\cH, \nu)$ is also Choquet--Deny. By Corollary \ref{corollary: icc CD groupoids} it follows that $\cH$ must be a finite equivalence relation.

    Conversely, suppose that $(\cG, \mu)$ admits no icc quotients apart from finite equivalence relations. Since the equivalence relation associated with $\cG$ is a quotient of $\cG$, it must have finite orbits. Then we can proceed as in the proof of Corollary \ref{Corollary: CD passes to Iso}: using Proposition \ref{Proposition: semi-direct product groupoid}, we write $\cG$ as a crossed product of its isotropy groupoid by its equivalence relation. We then find a fundamental domain $Y \subseteq \cG\zero$ such that $\Iso(\cG)|_Y$ is a quotient of $\cG$. It follows that $\Iso(\cG)|_Y$ also has no icc quotients. But this means that for $\mu$-almost every $y \in Y$, the isotropy group $\cG_y^y$ has no icc quotients as a group. It follows that $\mu$-almost every $\cG_y^y$ is Choquet--Deny, which then also implies that for $\mu$-almost every $x \in \cG\zero$ the isotropy group $\cG_x^x$ is Choquet--Deny. By Corollary \ref{Corollary: bundle of groups CD} this means that $\Iso(\cG)$ is Choquet--Deny. But then Theorem \ref{Main Theorem CD} implies that $(\cG, \mu)$ is Choquet--Deny.
\end{proof}

\subsubsection{Transformation groupoids}

In this final subsection we classify the Choquet--Deny property for transformation groupoids. Corollary~\ref{Main Corollary} immediately follows from the following. 
\begin{theorem}\label{theorem: transformation groupoids} 
Let $\Gamma$ be a countable group acting on a Borel probability space $(X,\mu)$. Then the transformation groupoid $(\Gamma\ltimes X,\mu)$ is Choquet--Deny if and only if $\Gamma$ is Choquet--Deny and $\Gamma\curvearrowright X$ has finite orbits $\mu$-almost everywhere. 
\end{theorem}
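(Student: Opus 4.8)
The plan is to read off the statement from the main characterization, Theorem~\ref{Main Theorem CD}, together with the two facts about transformation groupoids already established above, once one unwinds what the abstract conditions mean in this concrete case. Write $\mathcal{G}:=\Gamma\ltimes X$. Its associated countable Borel equivalence relation is precisely the orbit equivalence relation of $\Gamma\curvearrowright(X,\mu)$, so condition~(1) of Theorem~\ref{Main Theorem CD}, that this relation have finite orbits $\mu$-almost everywhere, is exactly the assertion that $\Gamma\curvearrowright X$ has finite orbits $\mu$-almost everywhere. Moreover, for every $x\in X$ the isotropy group $\mathcal{G}_{x}^{x}$ is canonically isomorphic to the stabilizer $\Stab_{\Gamma}(x)\leq\Gamma$, so condition~(2) amounts to the statement that $\mu$-almost every point stabilizer is Choquet--Deny.

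For the ``only if'' direction no new work is needed: if $\mathcal{G}$ is Choquet--Deny, then Proposition~\ref{Prop: transformation groupoid CD =00003D> group CD} shows that $\Gamma$ is Choquet--Deny, and Corollary~\ref{Corollary: CD passes to eq rel} shows that the associated equivalence relation, i.e., the orbit relation of $\Gamma\curvearrowright X$, has finite orbits $\mu$-almost everywhere. For the ``if'' direction I would invoke Theorem~\ref{Main Theorem CD} (equivalently, the implication in Theorem~\ref{IsotropyToGroupoid}): condition~(1) holds by hypothesis, so everything reduces to verifying condition~(2), i.e., that each stabilizer $\Stab_{\Gamma}(x)$ is Choquet--Deny. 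Since every stabilizer is a subgroup of $\Gamma$, this in turn reduces to the single assertion that a subgroup of a Choquet--Deny group is Choquet--Deny.

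That last assertion is where the only genuine work lies, and I would establish it through \cite{Frisch2019}: it suffices to show that FC-hypercentrality passes to subgroups. Given $H\leq G$ with $G$ FC-hypercentral, one proves by transfinite induction on $\alpha$ that $H\cap FC_{\alpha}(G)\subseteq FC_{\alpha}(H)$. The base case and the limit case (a union) are immediate; the successor case is the point, where one argues that if $h\in H\cap FC_{\alpha+1}(G)$ then the $H$-conjugates of $h$ meet only finitely many cosets of $FC_{\alpha}(G)$, and feeding in the inductive hypothesis $H\cap FC_{\alpha}(G)\subseteq FC_{\alpha}(H)$ upgrades this to finitely many cosets of $FC_{\alpha}(H)$, so that $h\in FC_{\alpha+1}(H)$. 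Since $FC_{\alpha}(G)=G$ for some $\alpha$, this forces $FC_{\alpha}(H)=H$, i.e., $H$ is FC-hypercentral, hence Choquet--Deny. With this in place the theorem follows. The main, and essentially the only, obstacle is thus the hereditary behavior of the Choquet--Deny property under subgroups; the remaining ingredients, namely the identification of the isotropy groups of $\Gamma\ltimes X$ with point stabilizers and the verification that the earlier results apply verbatim, are routine.
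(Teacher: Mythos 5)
Your proposal is correct and follows essentially the same route as the paper: the ``only if'' direction via Proposition~\ref{Prop: transformation groupoid CD =00003D> group CD} (and Corollary~\ref{Corollary: CD passes to eq rel} for the finite-orbit part), and the ``if'' direction by identifying the isotropy groups of $\Gamma\ltimes X$ with point stabilizers and feeding these into Theorem~\ref{Main Theorem CD}. The only difference is that you supply a proof of the hereditary step --- that subgroups of Choquet--Deny groups are Choquet--Deny, via the transfinite induction $H\cap FC_{\alpha}(G)\subseteq FC_{\alpha}(H)$ --- where the paper simply asserts this as known; your argument for that step is correct.
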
 
\begin{proof}
First assume that $(\Gamma\ltimes X,\mu)$ is Choquet--Deny. By Proposition~\ref{Prop: transformation groupoid CD =00003D> group CD}, it follows that $\Gamma$ is Choquet--Deny.

Conversely, suppose that $\Gamma$ is Choquet--Deny and that $\Gamma\curvearrowright X$ has finite orbits $\mu$-almost everywhere. Since $(\Gamma\ltimes X)_{x}^{x}$ is a subgroup of $\Gamma$ for all $x\in X$, it follows that $(\Gamma\ltimes X)_{x}^{x}$ is Choquet--Deny for $\mu$-almost every $x\in X$. The converse therefore follows again from Theorem \ref{Main Theorem CD}. 
\end{proof}
In the case of the group of integers, we obtain the following dichotomy.

\begin{corollary} Let $(X,\mu)$ be a Borel probability space and suppose that $\mathbb{Z}\curvearrowright(X,\mu)$ ergodically. Then exactly one of the following two statements holds:
\begin{enumerate}
\item The action $\mathbb{Z}\curvearrowright(X,\mu)$ is free. 
\item The transformation groupoid $(\mathbb{Z}\ltimes X,\mu)$ is Choquet--Deny. 
\end{enumerate}
\end{corollary}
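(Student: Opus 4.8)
The plan is to read off the dichotomy from Theorem~\ref{theorem: transformation groupoids}, after establishing a simple ergodicity fact about stabilizers of $\mathbb{Z}$-actions.

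First I would record the key structural observation: for an ergodic action $\mathbb{Z}\curvearrowright(X,\mu)$ with generating transformation $T$, either the action is essentially free or $\mu$-almost every orbit is finite, and these two alternatives cannot both occur. Indeed, the set of points with non-trivial stabilizer is $\bigcup_{n\geq 1}\Fix(T^{n})$; if it is non-null, then $\Fix(T^{n})$ is non-null for some $n\geq 1$, and since $T^{n}(T^{k}x)=T^{k}(T^{n}x)$ the set $\Fix(T^{n})$ is $T$-invariant, so by ergodicity it is co-null. Then $\mu$-almost every orbit has cardinality dividing $n$, in particular it is finite. Conversely, if the action is essentially free, then $\mu$-almost every orbit is a copy of $\mathbb{Z}$ and hence infinite, which rules out the second alternative. (Here I read ``free'' in the measured sense, i.e.\ $\mu$-almost every point has trivial stabilizer.)

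Next I would apply Theorem~\ref{theorem: transformation groupoids} with $\Gamma=\mathbb{Z}$. Since $\mathbb{Z}$ is abelian it is Choquet--Deny, so the theorem reduces to the statement that $(\mathbb{Z}\ltimes X,\mu)$ is Choquet--Deny if and only if $\mathbb{Z}\curvearrowright(X,\mu)$ has finite orbits $\mu$-almost everywhere. Combining this with the dichotomy of the previous paragraph, statement (2) holds exactly when $\mathbb{Z}\curvearrowright(X,\mu)$ is not essentially free, i.e.\ exactly when statement (1) fails. Therefore precisely one of (1) and (2) holds, which is the claim.

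I do not expect a real obstacle here; the only points needing a little care are the interpretation of ``free'' as ``essentially free'' and the verification that $\Fix(T^{n})$ is $T$-invariant so that ergodicity can be invoked.
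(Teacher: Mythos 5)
Your proposal is correct and follows the same route as the paper: the ergodic dichotomy (essentially free versus almost all orbits finite) combined with Theorem~\ref{theorem: transformation groupoids} and the fact that $\mathbb{Z}$ is Choquet--Deny. You simply spell out the $\Fix(T^{n})$-invariance argument and the exclusivity of the two alternatives, which the paper leaves implicit.
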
 
\begin{proof}
Since the action $\mathbb{Z}\curvearrowright(X,\mu)$ is ergodic, either the orbits are periodic $\mu$-almost everywhere or the action is free. In the periodic case, $(\mathbb{Z}\ltimes X,\mu)$ is Choquet--Deny due to Theorem~\ref{theorem: transformation groupoids}. 
\end{proof}

\emergencystretch3em
\printbibliography

\end{document}